\documentclass[12pt]{amsart}
\usepackage{fullpage}
\usepackage{tabu}
\usepackage{amssymb} 
\usepackage{amsmath} 
\usepackage{amscd}
\usepackage{amsbsy}
\usepackage{comment, enumerate}
\usepackage[matrix,arrow]{xy}
\usepackage[
%       draft,
        colorlinks, citecolor=blue,
        backref,
]{hyperref}
\usepackage{mathrsfs}
\usepackage{color}
\usepackage{mathtools,caption}
\usepackage{todonotes}
\usepackage{stmaryrd}
\usepackage{tikz-cd} 
\usepackage{enumerate}
\usepackage{comment}
\usepackage[T1]{fontenc}

\DeclareMathOperator{\Norm}{Norm}
\DeclareMathOperator{\Tr}{Tr}
\DeclareMathOperator{\ch}{\mathrm{char}}
\DeclareMathOperator{\Gal}{Gal}
\DeclareMathOperator{\Frobp}{Frob_{\fp}}
\DeclareMathOperator{\condexp}{condexp}
\DeclareMathOperator{\PS}{PS}
\DeclareMathOperator{\St}{St}
\DeclareMathOperator{\Res}{Result}
\DeclareMathOperator{\Ind}{Ind}
\DeclareMathOperator{\Art}{Art}
\DeclareMathOperator{\Frob}{Frob}
\DeclareMathOperator{\sep}{sep}
\DeclareMathOperator{\End}{End}
\DeclareMathOperator{\GL}{GL}
\DeclareMathOperator{\SL}{SL}
\DeclareMathOperator{\PSL}{PSL}
\DeclareMathOperator{\PGL}{PGL}

\newcommand{\mf}{\mathfrak}
\newcommand{\spec}{\mathrm{Spec}}

\newcommand{\calO}{\mathcal{O}}
\newcommand{\calC}{\mathcal{C}}

\newcommand{\rhobar}{{\overline{\rho}}}

\newcommand{\Z}{\mathbb{Z}}
\newcommand{\Q}{\mathbb{Q}}
\newcommand{\R}{\mathbb{R}}
\newcommand{\C}{\mathbb{C}}
\newcommand{\A}{\mathbb{A}}
\newcommand{\F}{\mathbb{F}}
\newcommand{\Fp}{\mathfrak{p}}
\newcommand{\fp}{\mathfrak{p}}

\newcommand{\vfq}{v_{\fq}}
\newcommand{\fN}{\mathfrak{N}}

\newcommand{\Fq}{\mathfrak{q}}
\newcommand{\Fr}{\mathfrak{r}}

\newcommand{\Qbar}{{\overline{\Q}}}
\newcommand{\fq}{\mathfrak{q}}
\newcommand{\fqf}{\fq_{5}}

\newcommand{\fpf}{\fp_5}
\newcommand{\n}{\mathfrak{n}}
\newcommand{\OK}{\mathcal{O}_K}
\newcommand{\OL}{\mathcal{O}_L}

\newcommand{\Kp}{K_{\mf{p}}}
\newcommand{\Kqf}{K_{\fqf}}
\newcommand{\Kq}{K_{\mf{q}}}

\newcommand{\Kqun}{\Kq^{un}}
\newcommand{\bKp}{\overline{K}_\fp}
\newcommand{\bKq}{\overline{K}_\fq}

\newcommand{\IKq}{I_{\Kq}}
\newcommand{\OKp}{\mathcal{O}_{\Kp}}
\newcommand{\OKq}{\mathcal{O}_{\Kq}}
\newcommand{\OKqf}{\mathcal{O}_{\Kqf}}

\newcommand{\pifq}{\pi_{\fq}}

\newcommand{\WKq}{W_{\Kq}}
\newcommand{\WKqab}{W^{\textrm{ab}}_{\Kq}}
\newcommand{\GalKq}{G_{\Kq}}
\newcommand{\Qp}{\Q_p}
\newcommand{\bQp}{\overline{\Q}_p}
\newcommand{\Zp}{\Z_p}

\newcommand{\Ffp}{\mathbb{F}_{\mf{p}}}
\newcommand{\Ffq}{\mathbb{F}_{\mf{q}}}
\newcommand{\FFp}{\mathbb{F}_p}

\newcommand{\bFFp}{\overline{\mathbb{F}}_p}

\newcommand{\bFfq}{\overline{\mathbb{F}}_{\mf{q}}}
\newcommand{\rhoJp}{\rho_{J,\fp}}
\newcommand{\rhoJpf}{\rho_{J,\fpf}}
\newcommand{\brhoJp}{\overline{\rho}_{J,\fp}}
\newcommand{\brhoJpf}{\overline{\rho}_{J,\fpf}}

\newcommand{\rhoHilfp}{\rho_{f, \fp}}

% Toggle on references to other articles in the proceedings

%%%% Equations, theorems, etc numbering

\numberwithin{equation}{section}

\newtheorem{theorem}{Theorem}[section]
\newtheorem{lemma}[theorem]{Lemma}
\newtheorem{corollary}[theorem]{Corollary}
\newtheorem{proposition}[theorem]{Proposition}

\theoremstyle{definition}
\newtheorem{definition}[theorem]{Definition}

\newtheorem{conjecture}[theorem]{Conjecture}
\newtheorem{example}[theorem]{Example}

\theoremstyle{remark}
\newtheorem{remark}[theorem]{Remark}

\begin{document}

% Toggle off references to other articles in the proceedings
% \newcommand{\citeproc}[1]{}
% \newcommand{\seeproc}[1]{}

\thispagestyle{empty}

%\maketitle

This is a slightly revised version of: 

\medskip

\sloppy I. Chen and A. Koutsianas. \textit{Darmon's Program: A survey}. Proceedings of a series of trimester programs on Triangle groups, Belyi uniformization, and Modularity, IInd Trimester, Jan 2022 - Apr 2022, Modularity and Generalized Fermat's Equation, Bhaskaracharya Pratishthana Educational and Research Institute, Pune, India. Accepted, July 25, 2023. 

\medskip

Publication of the proceedings has been delayed so we have made this version available in the meantime.

\newpage

\textbf{Notations and Conventions}

\vspace*{2ex}
\noindent $\Z$ \quad  ring of integers \\
$\Q$ \quad  field of rational numbers \\
$\R$ \quad field of real numbers \\ 
$\C$ \quad field of complex numbers \\
$\Q(\zeta_n)$ \quad the $n$th cyclotomic field\\
$\Q(\zeta_n)^+$ \quad the maximal totally real subfield of $\Q(\zeta_n)$\\
$\mathbb{F}$ \quad field \\
$\mathbb{F}[X_1,X_2,\ldots,X_n]$ \quad ring of polynomials in $n$-variables with $\mathbb{F}$-coefficients \\
$\mathbb{F}(X_1,X_2,\ldots,X_n)$ \quad fraction field of $\mathbb{F}[X_1,X_2,\ldots,X_n]$ \\
$\A^n(\mathbb{F})$ \quad affine space over $\mathbb{F}$ respectively\\
$\mathbb{P}^n(\mathbb{F})$ \quad projective spaces over $\mathbb{F}$\\
$R$ \quad ring \\
$\spec(R)$ \quad the set of prime ideals in  ring $R$ \\
$R_{\mf{p}}$ \quad the ring of integers of the completion of the fraction field of $R$ for any \\
\hspace*{4.5ex} $\mf{p}\in\spec(R)$ as well as the localization of $R$ at $\mf{p}$\\
$A$ \quad Dedekind domain \\
$K$ \quad fraction field of $A$ \\
$\OK$ \quad ring of integers of $K$ \\
$L$ \quad finite extension of $K$ \\
$\OL$ \quad ring of integers of $L$ \\
$\mf{p},\mf{q},\mf{m},\mf{n}$ \quad prime ideals \\
$\mathbb{F}_p$ \quad the finite field of $p$ elements\\
$\Qp$ \quad  completion of $\Q$ at the prime $p$ \\
$\Zp$ \quad  ring of $p$-adic integers \\
$\Ffp$ \quad the residue field of $\mf{p}$ \\
$\Kp$ \quad completion of $K$ at a prime $\mf{p}$ \\
$\OKp$ \quad ring of integers of $\Kp$ \\
$v_\fp$ \quad the normalized valuation of $\OKp$ \\
$\zeta_K(s)$ \quad Dedekind zeta function associated to $K$ \\
$\Frob_{\mf{p}}$ \quad Frobenius element \\
$D(\mf{P}|\mf{p})$ \quad decomposition group for prime $\mf{P}$ over $\mf{p}$ \\
$I(\mf{P}|\mf{p})$ \quad inertia groups for prime $\mf{P}$ over $\mf{p}$ \\
$D_{\mf{p}}$ \quad absolute decomposition groups at a prime \\
$I_{\mf{p}}$ \quad absolute Inertia groups at a prime \\
$K^{\sep}$ \quad   a fixed separable closure of $K$ \\
$G_K=\Gal(\sep{K}/K)$ \quad absolute Galois group of $K$ \\
$\overline{K}$ \quad a fixed algebraic closure of $K$ \\
$\mathrm{char}(K)$ \quad characteristic of the field $K$

\newpage

\title{Darmon's Program: A survey}

\keywords{Fermat equations, modular method, Frey hyperelliptic curves, Darmon's program}
\subjclass[2020]{Primary 11D41, Secondary 11G10}

\author{Imin Chen}
\address{Department of Mathematics, Simon Fraser University\\
Burnaby, BC V5A 1S6, Canada.} 
\email{ichen@sfu.ca}

\author{Angelos Koutsianas}
\address{Department of Mathematics, Aristotle University of Thessaloniki\\
School of Science, 3rd floor, office 17, 54124, Thessaloniki, Greece} 
\email{akoutsianas@math.auth.gr}

\date{}

\begin{abstract}
We give an overview of Darmon's program for resolving families of generalized Fermat equations with one varying exponent and survey what is currently known about this approach based on recent work of Billerey-Chen-Dieulefait-Freitas and Chen-Koutsianas. Additionally, we provide background material which is helpful to understand and apply the methods developed in these recent works. In particular, we explain the basic strategy for and simplified examples of each of the steps that is required in order to resolve a family of generalized Fermat equations.
\end{abstract}

\maketitle

\tableofcontents

\section{The problem: generalized Fermat equations}

Consider the generalized Fermat equation
\begin{equation}\label{general-equ}
   x^p + y^q = z^r,
\end{equation}
of \textit{signature} $(p,q,r)$ where $p, q, r \in \mathbb{N}$. We say that a solution $(a,b,c) \in \Z^3$ to \eqref{general-equ} is \textit{primitive} if $\text{gcd}(a,b,c) = 1$ and \textit{trivial} if $abc = 0$. 

The following has been conjectured by several people, including Beal, Granville, Tijdeman-Zagier, and there is currently a prize offered for its resolution \cite{Mauldin97}.

\begin{conjecture}[The ``Beal Prize'' Conjecture]\label{beal-conj}
If $p, q, r \ge 3$, then there are no non-trivial primitive solutions to $x^p + y^q = z^r$.
\end{conjecture}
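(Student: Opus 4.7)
The plan is to attack Conjecture~\ref{beal-conj} signature by signature via the modular method, as systematized by Darmon. First, I would observe that it suffices to treat the case where the exponents are all primes: given a hypothetical primitive non-trivial solution $(a,b,c)$ with exponents $\ge 3$, replacing each exponent by any prime divisor yields a primitive non-trivial solution of prime signature with all primes at least $3$. Thus the problem breaks into families parameterised by a triple of primes, and the strategy is to eliminate each family in turn.

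For a fixed signature $(p,q,r)$ one associates to any hypothetical non-trivial primitive solution $(a,b,c)$ a Frey object: a Frey elliptic curve over $\Q$ in the classical cases $(p,p,p)$, $(2,3,p)$, $(2,4,p)$, $(3,3,p)$, and so on; and a Frey hyperelliptic curve, whose Jacobian is an abelian variety of $\mathrm{GL}_2$-type over the totally real field $\Q(\zeta_r)^+$, in the more general cases $(p,p,r)$, $(r,r,p)$, and ultimately $(p,q,r)$ with all three exponents variable. One then attaches a two-dimensional mod-$p$ Galois representation $\overline{\rho}$ to the Frey object, proves that it is modular, applies level-lowering of Ribet/Skinner--Wiles type to push the Artin conductor down to a small explicit value, and finally rules out every resulting mod-$p$ Hilbert newform at that level by a combination of dimension computations, congruences with Eisenstein series, Mazur-style image-of-inertia arguments at small auxiliary primes, and the multi-Frey refinement. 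Existing work (Wiles for $(p,p,p)$; Darmon--Merel for $(p,p,2)$ and $(p,p,3)$; Bennett--Skinner for $(2,4,p)$; and the programs of Billerey--Chen--Dieulefait--Freitas and Chen--Koutsianas for various $(p,p,r)$) completes this strategy for infinitely many subfamilies.

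The hard part, and the main obstacle, is carrying the last two steps out uniformly in the signatures where every exponent is a variable prime. Step one requires a modularity lifting theorem for two-dimensional mod-$p$ Galois representations attached to abelian varieties of $\mathrm{GL}_2$-type over $\Q(\zeta_r)^+$; such results are available only under hypotheses on the residual image and on the prime $p$ that fail in the generality needed. Step two requires either explicit enumeration of Hilbert newforms at levels that grow with $r$, or new arithmetic input (typically a second, auxiliary Frey object) that cuts the surviving forms down to none; no general mechanism for the second step is currently known. Consequently the proposal above resolves Conjecture~\ref{beal-conj} only conditionally on these two ingredients, and an unconditional proof lies beyond the present reach of the method — the remainder of the survey is devoted to explaining exactly how far each step has been pushed, and what would be needed to complete it.
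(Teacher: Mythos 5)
The statement you are asked about is a \emph{conjecture}, not a theorem: the paper states it precisely because it is open, and it contains no proof of it. Your write-up honestly acknowledges this in its final paragraph, so you have not made the mistake of claiming a proof where none exists; what you have supplied is a summary of the strategy (Darmon's program / the modular method) that the rest of the survey is devoted to, together with a correct assessment of why that strategy does not currently close the problem. That is essentially the paper's own stance.

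However, there is one concrete mathematical slip in your opening reduction. You write that it suffices to treat prime exponents because ``replacing each exponent by any prime divisor yields a primitive non-trivial solution of prime signature with all primes at least $3$.'' This fails when an exponent is a power of $2$: for example, if $p = 4$ then its only prime divisor is $2$, which violates the hypothesis $\ge 3$. The correct reduction, stated in the paper, is that one may assume each of $p, q, r$ is either an odd prime or equal to $4$. This is not a cosmetic point — the signatures $(4,4,p)$, $(p,p,4)$, $(4,q,r)$, etc.\ do not fall under the odd-prime machinery of Frey hyperelliptic curves over $\Q(\zeta_r)^+$ and require separate treatment, so claiming a reduction to all-odd-prime signatures would silently discard a genuinely necessary part of the problem. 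If you want to present the program-as-proof-plan, you should say ``odd prime or $4$'' and note that the exponent-$4$ cases need ad hoc Frey curves (e.g.\ Bennett--Skinner for $(n,n,2)$ with $n=4$, etc.).

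Beyond that, one further nuance worth flagging: even granting all the modularity, irreducibility, and level-lowering ingredients, the paper emphasizes that the elimination step is obstructed by Frey abelian varieties attached to trivial solutions, which have CM and hence give rise to Hilbert newforms at the target Serre level that cannot be removed by local trace comparisons. Darmon's Conjecture~\ref{big-image} is designed exactly to dispatch those, and the paper stresses that even Conjecture~\ref{big-image} is insufficient on its own because the space of non-CM forms to eliminate grows with the fixed exponent. Your summary folds this into ``new arithmetic input that cuts the surviving forms down to none,'' which is accurate in spirit, but naming the CM obstruction and Conjecture~\ref{big-image} explicitly would make the assessment sharper and match the paper's diagnosis.
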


We remark, without the condition of primitivity, there are in general non-trivial solutions to \eqref{general-equ} (see for instance \cite{beal-survey}).  If the signature $(p,q,r)$ is fixed and 
\begin{equation}\label{hyperbolic}
  \frac{1}{p} + \frac{1}{q} + \frac{1}{r} < 1,
\end{equation}
then Darmon-Granville \cite{Darmon-Granville} have shown that \eqref{general-equ} has only finitely primitive solutions by a reduction to Faltings' Theorem on the finiteness of rational points of smooth curves of genus $\ge 2$ over a number field, where the reduction uses a version of the Chevalley-Weil Theorem \cite{Beckmann}. 

It is worth mentioning that the ABC Conjecture implies that there exists a positive integer $n_0$ such that \eqref{general-equ} has no non-trivial primitive solutions for $p,q,r\geq n_0$ \cite{Mauldin97}. Therefore, from the result of Darmon-Granville and the ABC Conjecture we get that there are only finitely many counterexamples to Beal's Conjecture. In fact, it is conjectured \cite{Darmon-epsilon} that the primitive solutions arising from the following identities,
\begin{gather}\label{fermat-catalan}
1^n + 2^3 = 3^2, \\
\notag 2^5+7^2 = 3^4, 7^3+13^2 = 2^9, 2^7+17^3 = 71^2, 3^5+11^4 = 122^2, \\
\notag 17^7+76271^3 = 21063928^2, 1414^3+2213459^2 = 65^7, 9262^3+15312283^2 = 113^7, \\
\notag 43^8+96222^3 = 30042907^2, \text{ and }33^8+1549034^2 = 15613^3,
\end{gather}
are the only ones to \eqref{general-equ} among all signatures $(p,q,r)$ satisfying \eqref{hyperbolic}.

To resolve \eqref{general-equ} for integers $p, q, r \ge 3$, it is is sufficient to resolve \eqref{general-equ} for $p, q, r \ge 3$, where each of $p, q, r$ is either an odd prime or $4$.

The special case of $r = q = p$ is Fermat's Last Theorem which was proven in \cite{Wiles, Taylor-Wiles} using Galois representations and modularity. We refer to an approach by Galois representations and modularity as the \textit{modular method}.

The modular method for resolving a Diophantine equation such as \eqref{general-equ} consists of carrying out the following steps.

\begin{enumerate}
\item {\bf Construct a Frey elliptic curve.} Attach an elliptic curve $E/K$ to a putative solution of a Diophantine equation, where $K$ is some totally real field. Compute the discriminant $\Delta(E)$ and conductor $N(E)$ of $E/K$.

\item {\bf Modularity.} Prove modularity of $E/K$ in the sense that $\rho_{E,p} \simeq \rho_{f,\mathfrak{P}}$ for some Hilbert newform $f$, where $\rho_{E,p}$ and $\rho_{f,\mathfrak{P}}$ are the $p$-adic representations attached to $E$ and $f$, respectively.

\item {\bf Irreducibility.} Prove irreducibility of 
$\overline{\rho}_{E,p}$, the mod $p$ Galois representation attached to $E/K$.

\item {\bf Level lowering.} Conclude that
$\overline{\rho}_{E,p} \simeq \rhobar_{f,\mathfrak{P}}$ where $f$ is a Hilbert
newform over $K$ of parallel weight 2, trivial character, and Serre level $N(\overline{\rho}_{E,p})$ which has finitely many possible values. The vector space of such Hilbert newforms is finite in dimension.
\item {\bf Contradiction.} 
Compute all the Hilbert newforms predicted in the previous step; then, for each computed newform~$f$ and $\mathfrak{P} \mid p$ in its field of coefficients, show that $\overline{\rho}_{E,p} \not\simeq \overline{\rho}_{f,\mathfrak{P}}$. If we succeed in doing so, we say that \textit{the newform $f$ has been eliminated}. A typical method for elimination is to show the two representations do not have the same trace at an unramified Frobenius element $\Frob{\Fq}$. This is a method which is local at the prime $\Fq$ of $K$.
\end{enumerate}

Every step of the modular method runs into difficulties when applied to general signature. In \cite{DarmonDuke}, Darmon described an approach which can be used to address some (but not all) of the difficulties encountered for general signature. 

In this survey, we will give an overview of his initial framework and described recent advances on his program contained in \cite{BCDF2, Chen-2022-xhyper, ChenKoutsianas1}. It is intended to provide ``quick start overview and user guide'' to these papers and the program.

\section{Darmon's program to the rescue!}

In \cite{DarmonDuke}, Darmon described a program to show the generalized Fermat equations \eqref{general-equ} have no non-trivial primitive solutions where there is one prime exponent $p \ge 3$ which varies, using the approach of Galois representations and modularity. Darmon permutes the terms into a standard form so the signatures addressed are one of the following: $(p,p,r)$, $(r,r,p)$, or $(q,r,p)$. 

Recall an abelian variety $A/K$ defined over a number field $K$ such that $\End_K(A) \otimes \Q$ contains a number field $F$ of degree $[F:\Q] = \dim A$ is called an \textit{abelian variety of $\GL_2(F)$-type}.

The steps of the modular method are generalized and modified in Darmon's program as follows:

\smallskip

{\bf 1. Construct a Frey abelian variety.} Darmon replaces the elliptic curve $E/K$ by an abelian variety $A/K$ of $\GL_2(K)$-type, where $K$ is a totally real field. In principle, this covers all signatures $(p,q,r)$, but explicit constructions are only generally known for signatures $(p,p,r)$ and $(r,r,p)$.

\medskip

{\bf 2. Modularity.} Prove modularity of $A/K$ in the sense that $\rho_{A,\Fp} \simeq \rho_{f,\mathfrak{P}}$, where $\rho_{A,\Fp}$ and $\rho_{f,\mathfrak{P}}$ are the $p$-adic representations attached to $A$ and $f$, respectively. Due to relations between Frey abelian varieties and rigidity, it is possible to propagate modularity using a strategy of Darmon \cite{DarmonDuke, DarmonRigid}. Advances in modularity lifting theorems allows this strategy to be carried out in many cases, but the general strategy cannot be carried out due to the lack of sufficiently strong modularity lifting theorems in the residually reducible cases.

\smallskip

{\bf 3. Irreducibility.} Prove irreducibility of $\overline{\rho}_{A,\Fp}$, the mod $\Fp$ Galois representation attached to the abelian variety $A/K$ of $\GL_2(K)$-type, where $\Fp$ is a prime of $K$. There are no currently known Mazur-type results for abelian varieties of $\GL_2(K)$-type. However, under additional local conditions, it is possible to prove irreducibility.

\medskip

{\bf 4. Level lowering.} This step can be achieved by results of Fujiwara, Jarvis, Rajaei \cite{Fuj,Jarv,Raj} which generalize Ribet's level lowering results from $\Q$ to totally real fields $K$. As before, we conclude that $\overline{\rho}_{A,\Fp} \simeq \rhobar_{f,\mathfrak{P}}$ where $f$ is a Hilbert
newform over $K$ of parallel weight 2, trivial character, and Serre level $\n(\overline{\rho}_{A,\Fp})$ which has finitely many possible values.

\medskip

{\bf 5. Contradiction.} 
Compute all the Hilbert newforms predicted in the previous step; then, for each computed newform~$f$ and $\mathfrak{P} \mid p$ in its field of coefficients, show that $\overline{\rho}_{A,\Fp} \not\simeq \overline{\rho}_{f,\mathfrak{P}}$. 
 
\medskip
 
In general, Step 5 is perhaps the hardest step. For example, there may be a trivial primitive solution to \eqref{general-equ} such that the Frey abelian variety $A_0$ attached to it has the property that $\rho_{A_0,\Fp} \simeq \rho_{f_0,\mathfrak{P}}$ occurs at one of the possible Serre levels. The newform $f_0$ cannot be eliminated in such a case by local methods and the trivial primitive solution presents an \textit{obstruction} for the modular method to succeed.

\begin{remark}
In Step 5 we do not deal with the difficulties in the computation of Hilbert newforms since this is a problem of its own interest. About the problem of computing Hilbert newforms see \cite{DembeleVoight13}.
\end{remark}

One idea of Darmon's program is to make use of the following property.

\begin{proposition} (Darmon)
The Frey abelian varieties attached to trivial primitive solutions have complex multiplication (CM).
\end{proposition}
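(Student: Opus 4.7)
The plan is to exploit the fact that Darmon's Frey abelian variety $A_{(a,b,c)}$ arises as an isotypic component of the Jacobian of a hyperelliptic curve $C_{(a,b,c)}$ whose defining polynomial depends algebraically on $(a,b,c)$ and which carries a natural cyclic group of automorphisms responsible for the $\GL_2(F)$-type structure. At a trivial primitive solution, one coordinate vanishes and this polynomial acquires extra symmetry coming from the multiplicative structure of the degenerate equation.

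First I would recall the explicit form of Darmon's Frey curves $C_{(a,b,c)}$ for signatures $(p,p,r)$ and $(r,r,p)$, together with the cyclic automorphism whose induced action on the Jacobian produces the embedding $F \hookrightarrow \End_K(A_{(a,b,c)}) \otimes \Q$. Next I would specialize to a trivial primitive solution by setting, say, $c=0$; the Fermat equation then reduces to $a^p=-b^q$, and by primitivity and rescaling by appropriate roots of unity the triple normalizes to a distinguished one such as $(1,-1,0)$. The defining polynomial of $C_{(1,-1,0)}$ then reduces to a cyclotomic-type polynomial whose splitting field is a cyclotomic extension. Consequently the specialized curve $C_0$ admits an automorphism group strictly larger than the generic one, and these extra automorphisms induce, via Picard functoriality, additional endomorphisms of the specialized Frey variety $A_0$ that commute with the existing $F$-action.

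Finally, a dimension count shows that the $\Q$-subalgebra of $\End_{\overline{K}}(A_0) \otimes \Q$ generated by $F$ together with these new endomorphisms has $\Q$-rank at least $2\dim A_0$, which is precisely the definition of $A_0$ having CM; generically the CM field will be a cyclotomic extension of $F$. The main obstacle will be handling the degeneration rigorously: at the trivial solution the curve $C_0$ may become singular or drop in genus, so one must pass to an appropriate stable or smooth model and identify the correct isotypic factor corresponding to $A_0$ before the extra automorphisms can be transported to endomorphisms. A secondary difficulty is that for signature $(q,r,p)$ with three distinct varying exponents no explicit Frey abelian variety is yet in the literature, so the proposition there would be conditional on such a construction and on a parallel identification of its generic automorphism group.
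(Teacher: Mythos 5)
Your central idea — that at a trivial specialization the Frey curve acquires an extra cyclic automorphism, inducing additional endomorphisms of the Jacobian which enlarge the $\Q(\zeta_r)^+$-action of $\GL_2$-type to a CM action by $\Q(\zeta_r)$ — is exactly the mechanism behind Darmon's proof; the paper itself does not reprove the statement but cites \cite[Proposition~3.7]{DarmonDuke}. However, the main obstacle you flag, namely that the curve might become singular or drop genus at the trivial solution and require passage to a stable model, does not occur in Darmon's construction and is a distraction. For signature $(p,p,r)$, the model $C_r^-(1,-1,0)$ specializes cleanly to $y^2 = x^r - 4$, which is non-singular and manifestly carries the order-$r$ automorphism $(x,y) \mapsto (\zeta_r x, y)$; the paper records explicitly that $C_r^\pm(1,-1,0)$ is non-singular and that $J_r^\pm(1,-1,0)$ has CM by $\Q(\zeta_r)$. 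The dimension count then closes immediately since $[\Q(\zeta_r):\Q] = r-1 = 2\dim J_r^-$. Likewise for signature $(r,r,p)$, Kraus' curve $C_r(0,1)$ becomes $y^2 = x^r + 1$ with the same automorphism. You should also soften your secondary objection: Darmon does construct Frey abelian varieties $J_{q,r}^\pm$ for signature $(q,r,p)$ over $\Q(\zeta_q,\zeta_r)^+$ as quotients of superelliptic Jacobians, so the proposition is not conditional there; what is lacking is explicit hyperelliptic models, not the abelian varieties themselves.
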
 

\begin{conjecture}[Darmon]\label{big-image}
Let $K$ be totally real field. There exists a constant $C_K$ such that, for any abelian variety $A/K$ of $\GL_2(K)$-type with 
\begin{equation*}
\End_K(A)\otimes \Q = \End_{\overline{K}}(A) \otimes \Q  = K,
\end{equation*}
and all primes $\fp$ of~$K$ of norm $ > C_K$, 
we have $\SL_2(\F_\Fp) \subseteq \rhobar_{A,\fp}(G_K)$.
\end{conjecture}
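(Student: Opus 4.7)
The plan is to apply Dickson's classification of maximal subgroups of $\PGL_2(\F_\fp)$ to the projective image of $H := \rhobar_{A,\fp}(G_K)$. If $\SL_2(\F_\fp) \not\subseteq H$, then the projective image lies in one of: (i) a Borel subgroup (the reducible case), (ii) the normalizer of a split Cartan, (iii) the normalizer of a non-split Cartan, or (iv) an exceptional subgroup projectively isomorphic to $A_4$, $S_4$, or $A_5$. The goal is to bound $\Norm(\fp)$ in each case by a constant depending only on $K$ (\emph{uniformly} in $A$), and then take $C_K$ to be the maximum.

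The exceptional case (iv) is the most accessible. Since the projective image has order at most $60$, a comparison with the determinant of $\rhobar_{A,\fp}$, which up to a character of bounded order is the mod $\fp$ cyclotomic character, forces $\Norm(\fp) - 1$ to divide a quantity bounded in terms of $[K:\Q]$. Combining this with local inertia input at primes of $K$ above the residue characteristic of $\fp$ rules out (iv) once $\Norm(\fp)$ exceeds an absolute constant (times a factor depending on $K$).

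For the normalizer-of-Cartan cases (ii) and (iii), $\rhobar_{A,\fp}$ becomes reducible after restriction to a quadratic extension $K'/K$. For each fixed $A$, this can only persist for finitely many $\fp$ under the hypothesis $\End_{\overline{K}}(A) \otimes \Q = K$; otherwise a Faltings-style isogeny theorem would manufacture an extra geometric endomorphism of $A/K'$, contradicting the assumption. The subtle point is uniformity in $A$: over $\Q$ for elliptic curves, the split Cartan case was handled by Bilu--Parent--Rebolledo by analyzing rational points on $X_{\mathrm{split}}^{+}(p)$, while the non-split Cartan case remains open in general. The analogous strategy over a totally real $K$ would require a systematic study of $K$-rational points on Hilbert modular varieties carrying Cartan-type level structure and parametrizing $\GL_2(K)$-type abelian varieties with no extra geometric endomorphisms.

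The most serious obstacle is the reducible case (i), which demands a Mazur-type isogeny theorem for $\GL_2(K)$-type abelian varieties: for all non-CM $A/K$ of $\GL_2(K)$-type, a $G_K$-stable line modulo $\fp$ should exist only for $\Norm(\fp)$ bounded in terms of $K$. Over $\Q$ and for elliptic curves this is Mazur's celebrated theorem, whose proof hinges on controlling the rational points of $X_0(p)$ via an Eisenstein-ideal analysis. A direct analogue over $K$ would involve classifying $K$-rational points on Hilbert modular varieties parametrizing $\GL_2(K)$-type abelian varieties equipped with a cyclic $\fp$-isogeny, and ruling out non-CM, non-cuspidal rational points; as the excerpt already indicates, no such theorem is currently available. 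I expect this Mazur-type step, together with the non-split Cartan case, to be the two principal obstructions, with the reducible case being the hardest; the remaining steps would yield only a conditional resolution of the conjecture modulo these two uniform results.
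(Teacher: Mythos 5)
This statement is labelled a conjecture in the paper (it is Darmon's Conjecture~4.1 from \cite{DarmonDuke}), and the paper explicitly remarks that ``this conjecture is wide open!''\ The paper offers no proof; it only records the partial results known in dimension $1$ (Mazur \cite{Mazur-isogeny}, Bilu--Parent \cite{Bilu-Parent}, Darmon--Merel \cite{DarmonMerel}) and notes that the normalizer-of-non-split-Cartan case is still outstanding even there. So there is no ``paper's own proof'' to compare against, and what you have written is, by your own account, not a proof but an outline of what a proof would require. You are honest about this, and to your credit the outline is a reasonable and accurate survey of the obstructions, broadly consistent with the paper's remarks: the Dickson classification into Borel, split/non-split Cartan normalizers, and exceptional subgroups is the standard framework; the reducible case does require a Mazur-type isogeny theorem for $\GL_2(K)$-type abelian varieties over $K$, which does not exist; and the non-split Cartan case is unresolved even over $\Q$ for elliptic curves.

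One technical inaccuracy worth flagging: in the exceptional case (iv) you say that ``a comparison with the determinant of $\rhobar_{A,\fp}$ \ldots forces $\Norm(\fp)-1$ to divide a quantity bounded in terms of $[K:\Q]$.''\ This is not how the exceptional case is usually excluded. The exceptional projective images $A_4$, $S_4$, $A_5$ contain no elements of order $>5$, while the image of inertia at a prime above the residue characteristic of $\fp$ (under mild hypotheses such as semistability or bounded ramification) contains, by Raynaud/Serre, a cyclic group of order comparable to $\Norm(\fp)-1$ or $\Norm(\fp)+1$ (arising from fundamental characters of level $1$ or $2$). It is this local inertia bound, not a divisibility coming from the determinant, that rules out exceptional images for $\Norm(\fp)$ large. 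The determinant being cyclotomic up to a bounded-order character is used elsewhere (e.g., to control the central character and to show projective image lands in $\PSL_2$ after a quadratic twist), but it does not by itself give the divisibility constraint you invoke. With that correction your list of the two principal obstructions (the Borel and non-split Cartan cases, both in a form uniform in $A$) is the right diagnosis and matches the state of the art as reported in the paper.
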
 

\medskip 

For $\dim A = 1$, Conjecture~\ref{big-image} is resolved in most cases by works of Mazur \cite{Mazur-isogeny} and Bilu-Parent \cite{Bilu-Parent}, with the normalizer of non-split Cartan subgroup case still outstanding (but see \cite{non-split-13, BalakrishnanDograMullerTuitmanVonk23} for recent progress). We note however, Darmon-Merel \cite{DarmonMerel} were able to prove a weaker form of the above conjecture in $\dim A = 1$ assuming $A$ has additional level $r$-structure where $r = 2, 3$, which suffices for applications to generalized Fermat equations of signature $(p,p,2)$ and $(p,p,3)$.

Conjecture~\ref{big-image} would deal with Step 3, and Step 5 for the problematic CM form, for $p$ sufficiently large.  However, this conjecture is wide open!

We can bypass obstructive trivial primitive solutions by imposing additional restrictions on the solutions considered. The results obtained show that in many cases, the only obstruction to obtaining a complete resolution of a generalized Fermat equation are trivial solutions which give rise to Frey abelian varieties with CM. For example, in the signature $(p,p,2)$ the obstruction arising by the trivial solution $(1, -1, 0)$ corresponds to the elliptic curve $E:~y^2 = x^3 - x$ which has CM over $\Q(\sqrt{-1})$ \cite{DarmonMerel}.

Finally, we end with what is another critical difficulty which was not addressed in Darmon's original program: as we have mentioned, the approach naturally resolves generalized Fermat equations in one varying exponent $p$. Keeping the other exponent(s) fixed, the space of Hilbert newforms to be eliminated grows with the size of the fixed exponent(s). 

Concretely, this has the consequence that even Conjecture~\ref{big-image} does not suffice to carry out Darmon's program because one still needs to eliminate the Hilbert newforms at the Serre level without CM. Hence, one requires new methods and computational efficiencies in addition to establishing Conjecture~\ref{big-image} to realize Darmon's program for new signatures.

The totally real fields which naturally arise in Darmon's program come from the maximal totally real subfield of a  cyclotomic field. For this, we will use the notation $\Q(\zeta_n)$ to be the extension obtain by adjoining a primitive $n$th root of unity $\zeta_n$, and $\Q(\zeta_n)^+$ to denote its maximal totally real subfield.

\section{Acknowledgements}

We would like thank the Bhaskaracharya Pratishthana Institute for the opportunity to give talks on these topics. We also thank Nicolas Billerey, Luis Dieulefait, and Nuno Freitas for their collaborations on these topics over the years, and additionally to Nuno Freitas for allowing us to include some material from his previous presentations on Darmon's program.

\section{Frey abelian varieties: explicit examples}

Let $p,q,r$ be odd primes, $t_1, t_2, t_3 \in \mathbb{P}^1(\overline{K})$, and $K = \Q(\zeta_q,\zeta_r)^+$ be the maximal totally real subfield of $\Q(\zeta_q, \zeta_r)$. A \textit{Frey representation of signature}~$(p,q,r)$ with respect to the points $(t_1,t_2, t_3)$, is a Galois representation
\begin{equation*}
  \rhobar : G_{K(t)} \rightarrow \GL_{2}(\F) \qquad \text{where } [\F : \F_p]  \text{ is finite,}
\end{equation*}
with projectivization $\mathbb{P}(\rhobar) : G_{K(t)} \rightarrow \PGL_2(\F)$ and such that 
\begin{itemize}
    \item $\rhobar |_{G_{\overline{K}(t)}}$ has trivial determinant and is irreducible;

    \item $\mathbb{P}(\rhobar)|_{G_{\overline{K}(t)}} : G_{\overline{K}(t)} \rightarrow \PSL_{2}(\F)$ is unramified outside $\left\{ t_1, t_2, t_3 \right\}$;

    \item $\mathbb{P}(\rhobar)|_{G_{\overline{K}(t)}}$ maps the inertia subgroups at $t_1, t_2, t_3$ to cyclic subgroups of $\PSL_{2}(\F)$ of order $p$, $q$, $r$, respectively.
\end{itemize}
For an explanation of inertia subgroups in the setting of function fields of characteristic $0$ see \cite{Stichtenoth09, serretopics}. One should think of $\rhobar = \rhobar(t)$ as a family of $G_K$-representations in the parameter $t \in \mathbb{P}^1$.

We say two Frey representations $\rhobar_1$ and $\rhobar_2$ with respect to the points $(t_1, t_2, t_3)$ are equivalent when $\mathbb{P}(\rhobar_1)$ and $\mathbb{P}(\rhobar_2)$ are conjugate in $\PGL_2(\overline{\F}_p)$. 

\begin{proposition}[Darmon \cite{DarmonDuke}]\label{finite-S}
Let $\rhobar$ be a Frey representation of signature $(p,q,r)$ with respect to the points $(0, 1, \infty)$. Then there exists a finite set of primes $S$ of $K$ depending on $\rhobar$ in an explicit way such that for all non-trivial primitive solutions $(a,b,c) \in \Z^3$ to $$A x^p + B b^q = C c^r,$$ the representation $\rhobar(A a^p/Cc^r)$
has a quadratic twist which is unramified outside $S$.
\end{proposition}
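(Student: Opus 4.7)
The plan is to analyze the inertia of $\rhobar(t_0)$, with $t_0 = Aa^p/Cc^r$, at primes $\fq$ of $K$, using two key observations. First, primitivity of $(a,b,c)$ combined with the exponential structure of the equation forces the $\fq$-adic valuations of $t_0$, $t_0 - 1 = -Bb^q/Cc^r$, and $1/t_0 = Cc^r/Aa^p$ to be divisible by $p$, $q$, $r$ respectively at primes $\fq$ outside a small finite set. Second, the Frey representation has projective inertia of orders exactly $p$, $q$, $r$ at the three branch points $0$, $1$, $\infty$, so these valuation divisibilities cause the projective inertia at $\fq$ to vanish after specialization.

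\textbf{Definition of $S$ and projective analysis.} Let $S$ consist of the primes of $K$ dividing $2pqr \cdot ABC$, together with the (finite, explicitly computable) set of primes at which $\rhobar$ itself has bad reduction as a family over $\mathbb{P}^1_K$. Fix $\fq \notin S$. By primitivity of $(a,b,c)$ and the fact that $\fq$ does not divide $ABC$, at most one of $v_\fq(a), v_\fq(b), v_\fq(c)$ is positive; direct computation shows $v_\fq(t_0) \equiv 0 \pmod p$ if $\fq \mid a$, $v_\fq(t_0 - 1) \equiv 0 \pmod q$ if $\fq \mid b$, $v_\fq(1/t_0) \equiv 0 \pmod r$ if $\fq \mid c$, and otherwise $t_0 \bmod \fq \notin \{0,1,\infty\}$. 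Under specialization at $t_0$, the image of $I_\fq \subset G_K$ in $\mathbb{P}(\rhobar)(t_0)$ factors through the $n$-th power of the local inertia generator at the approached branch point, where $n$ is the corresponding valuation; since the orders of these inertia generators are exactly $p, q, r$, it follows that $\mathbb{P}(\rhobar)(t_0)$ is unramified at $\fq$.

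\textbf{Lift to $\rhobar(t_0)$ and quadratic twist.} Because $\det \rhobar|_{G_{\overline{K}(t)}} = 1$, the lift from $\mathbb{P}(\rhobar)$ to $\rhobar$ over $G_{\overline{K}(t)}$ is controlled by the central extension $1 \to \mu_2 \to \SL_2(\mathbb{F}) \to \PSL_2(\mathbb{F}) \to 1$. Consequently, for $\fq \notin S$, the restriction $\rhobar(t_0)|_{I_\fq}$ factors through a quadratic character $\chi_\fq$ of $I_\fq$. Each $\chi_\fq$ is locally explicit, arising from $\sqrt{a}$, $\sqrt{b}$, or $\sqrt{c}$ via the specific form of the $\SL_2$-lift of the local monodromy at $0$, $1$, $\infty$. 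Global assembly via class field theory over $K$---using the Grunwald--Wang theorem, whose only obstruction for quadratic characters lies above $2$ and has been absorbed into $S$---produces a global quadratic character $\chi = \chi(a,b,c)$ of $G_K$, unramified outside $S$, matching $\chi_\fq$ at each $\fq \notin S$. Then $\rhobar(t_0) \otimes \chi$ is the sought quadratic twist.

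\textbf{Main obstacle.} The hardest step is the identification of the local quadratic characters $\chi_\fq$ and the verification that they assemble into a global character of $G_K$ unramified outside $S$. This requires a careful computation of the explicit $\SL_2$-lift of the projective monodromy at each of $0$, $1$, $\infty$ and its behavior under specialization; once this is pinned down, the global assembly is essentially formal, so the substantive content is the local monodromy computation.
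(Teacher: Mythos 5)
Your approach follows the expected line—the paper does not reproduce Darmon's proof, only remarks that it ``uses a variant of the Chevalley--Weil Theorem,'' and your valuation analysis plus Abhyankar-type cancellation of the branch-point monodromy is exactly what that variant delivers. Your set $S$ also includes $p$, consistent with the paper's remark that this method cannot make $S$ independent of the varying exponent. The geometric core of the argument (first two paragraphs) is sound.

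There are, however, two places where the last step is not yet rigorous.

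First, the passage from ``$\mathbb{P}(\rhobar)(t_0)$ unramified at $\Fq$'' to ``$\rhobar(t_0)(I_\Fq)$ lands in $\mu_2$'' does not follow from the central extension $1\to\mu_2\to\SL_2(\F)\to\PSL_2(\F)\to 1$ alone. The arithmetic inertia $I_\Fq\subset G_K$ is not inside $G_{\overline{K}(t)}$, and $\rhobar(t_0)$ takes values in $\GL_2(\F)$ with scalar center $\F^\times$, not $\SL_2(\F)$. Vanishing of the projective image only places $\rhobar(t_0)(I_\Fq)$ inside the scalars $\F^\times$. To cut this down to $\{\pm I\}$ you must also use the determinant: since $\det\rhobar|_{G_{\overline{K}(t)}}=1$, the character $\det\rhobar$ factors through a \emph{fixed} character $\delta$ of $G_K$ depending only on $\rhobar$; enlarging $S$ to contain the (finite, $t_0$-independent) ramification of $\delta$ gives $\det\rhobar(t_0)|_{I_\Fq}=1$ for $\Fq\notin S$, and a scalar whose square is $1$ lies in $\mu_2$. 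You gesture at this (``because $\det\rhobar|_{G_{\overline{K}(t)}}=1$'') but do not actually make the arithmetic deduction, which is the step that needs the argument.

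Second, Grunwald--Wang is the wrong tool for the ``global assembly.'' It produces a global character with prescribed behaviour at a finite given set of places, but here you are asserting that an a priori arbitrary family of local order-$2$ characters $(\chi_\Fq)$ glues; the reciprocity-law obstruction to such gluing is exactly what has to be checked, and invoking Grunwald--Wang does not discharge it. The cleaner route (and what Darmon effectively does) is to avoid the local-to-global problem entirely: the quadratic twist is produced \emph{globally and explicitly} from a model of the cover, exactly as the twist by $c$ in the Legendre $(p,p,p)$ example and the twists in Lemma~\ref{lem:Frey-model} are built directly into the chosen model $C_r^\pm(a,b,c)$ of $C_r^\pm(t)$. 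Equivalently, once one works with an integral model of the $\SL_2$-cover over $\Spec\calO_K[1/S]$, the twist is recorded by the discriminant of the degree-$2$ subcover $Y\to X$, which is a single global element of $K^\times/(K^\times)^2$; no patching is required. Your conclusion is correct, but as written the last paragraph asserts rather than proves the existence of the global $\chi$.
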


Note the set of primes $S$ is in general dependent on $\rhobar$ and hence on $p, q, r, A, B, C$. The method of proof of Proposition~\ref{finite-S}, which uses a variant of the Chevalley-Weil Theorem, is not strong enough to show that $S$ is independent of the varying exponent $p$.

Let $\rhobar : G_{K(t)} \rightarrow \GL_2(\F)$ be a Frey representation of signature $(p,q,r)$ with respect to $(0,1, \infty)$. For $j = 0, 1, \infty$, let $\sigma_j \in \PSL_2(\F)$ be the image of $\mathbb{P}(\rhobar) \mid_{G_{\overline{K}(t)}}$ of a generator of inertia at $j$, respectively, chosen so that $\sigma_0 \sigma_1 \sigma_\infty = 1$.

\medskip

For each $t = 0, 1, \infty$, there is a unique lift $\tilde{\sigma}_t \in \SL_2(\F)$ of $\sigma_t$. We say that $\rhobar$ is \textit{odd} if $\tilde{\sigma}_0 \tilde{\sigma}_1 \tilde{\sigma}_\infty = -1$, and \textit{even} if $\tilde{\sigma}_0 \tilde{\sigma}_1 \tilde{\sigma}_\infty = 1$.

\subsection{Signature \texorpdfstring{$(p,p,p)$}{(p,p,p)}}
The Legendre family
\begin{equation}\label{leg-family}
    L(t):~y^2 = x(x-1)(x-t),
\end{equation}
is the universal family of elliptic curves with a basis for $2$-torsion. It gives rise to Frey representations of signature $(p,p,p)$ with respect to~$\{0,1,\infty\}$ via $\rhobar_{L,p}$, the mod $p$ representation of $L = L(t)$.

\begin{theorem}[Hecke \cite{Hecke35}] 
There is a unique Frey representation of signature $(p,p,p)$ with respect to $(0,1,\infty)$.
It arises from the Legendre family $L(t)$ as described in \eqref{leg-family} and is odd.
\end{theorem}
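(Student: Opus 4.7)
The plan is to separate the claim into a uniqueness part, proved by a rigidity argument for triples of unipotent matrices in $\PSL_2$, and an existence-and-oddness part, checked directly on the Legendre family $L(t)$.

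For the uniqueness, I would pass first to the geometric projectivization $\PP(\rhobar)|_{G_{\overline{K}(t)}}$. Since it is unramified outside $\{0,1,\infty\}$, it factors through the geometric fundamental group of $\PP^1_{\overline{K}} \setminus \{0,1,\infty\}$, which is topologically generated by loops $\gamma_0, \gamma_1, \gamma_\infty$ around the punctures with the single relation $\gamma_0\gamma_1\gamma_\infty = 1$. Hence the projective restriction is encoded by a triple $(\sigma_0,\sigma_1,\sigma_\infty) \in \PSL_2(\F)^3$ with $\sigma_0\sigma_1\sigma_\infty = 1$ and each $\sigma_i$ of order $p$; equivalence of Frey representations corresponds to simultaneous $\PGL_2(\bFFp)$-conjugacy of such triples, so uniqueness reduces to rigidity of such triples.

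Now I would execute the rigidity argument. In characteristic $p$, an element of $\PSL_2$ of order $p$ is precisely a nontrivial unipotent element, which has a unique fixed point on $\PP^1(\bFFp)$. Irreducibility of $\rhobar|_{G_{\overline{K}(t)}}$ forces $\sigma_0$ and $\sigma_1$ to have distinct fixed points, otherwise the image would stabilise a line. Conjugating in $\PGL_2(\bFFp)$, I may send these fixed points to $\infty$ and $0$ respectively and rescale by a diagonal matrix, reducing the unipotent lifts to $\tilde\sigma_0 = \left(\begin{smallmatrix} 1 & 1 \\ 0 & 1 \end{smallmatrix}\right)$ and $\tilde\sigma_1 = \left(\begin{smallmatrix} 1 & 0 \\ b & 1 \end{smallmatrix}\right)$ with $b \neq 0$. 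The requirement that $\sigma_\infty = (\sigma_0\sigma_1)^{-1}$ project to a unipotent element becomes $\Tr(\tilde\sigma_0\tilde\sigma_1) = 2 + b \in \{+2,-2\}$, and since $b \neq 0$ this forces $b = -4$. Thus the projective triple, and therefore the Frey representation up to equivalence, is unique.

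For existence and oddness, I would check that $\rhobar_{L,p}$ realises this triple. The discriminant $16\,t^2(t-1)^2$ of $L(t)$ vanishes precisely at $t = 0, 1, \infty$ with valuation $2$, so $L$ has multiplicative reduction there and is smooth elsewhere; the determinant of $\rhobar_{L,p}$, being the mod $p$ cyclotomic character, is trivial on $G_{\overline{K}(t)}$. Because $p \nmid 2$, the Tate uniformisation implies that inertia at each of $0, 1, \infty$ acts through a nontrivial unipotent element of $\SL_2(\F_p)$, of exact order $p$ projectively, and irreducibility over $G_{\overline{K}(t)}$ can be verified by specialising $t$ to a point where the resulting elliptic curve has surjective mod $p$ Galois representation. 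Oddness then drops out of the explicit computation above: with $b = -4$ one finds $\tilde\sigma_0\tilde\sigma_1 = \left(\begin{smallmatrix} -3 & 1 \\ -4 & 1 \end{smallmatrix}\right)$ of trace $-2$, so the unique order-$p$ (i.e., truly unipotent, trace $2$) lift of $\sigma_\infty$ is $\tilde\sigma_\infty = -(\tilde\sigma_0\tilde\sigma_1)^{-1}$, giving $\tilde\sigma_0\tilde\sigma_1\tilde\sigma_\infty = -1$. The main technical hurdle is the rigidity step, which after the above normalizations collapses to the single trace condition $\Tr(\tilde\sigma_0\tilde\sigma_1) = \pm 2$; correctly tracking the unique unipotent lifts to extract the sign in $\tilde\sigma_0\tilde\sigma_1\tilde\sigma_\infty$ requires a little care but no new ideas.
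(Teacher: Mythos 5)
The paper itself does not give a proof of this theorem — it is cited to Hecke and implicitly to Darmon's Duke paper \cite{DarmonDuke}, where the proof appears as Theorem~1.5. Your argument is the same rigidity argument Darmon uses: pass to the geometric projectivization, observe it factors through $\pi_1(\PP^1_{\overline{K}}\setminus\{0,1,\infty\})$ and is therefore encoded by a triple of nontrivial unipotents $(\sigma_0,\sigma_1,\sigma_\infty)$ in $\PSL_2(\F)$ multiplying to $1$, normalize $\tilde\sigma_0, \tilde\sigma_1$ to upper/lower unipotents with distinct fixed points, and deduce from the trace condition $\Tr(\tilde\sigma_0\tilde\sigma_1)\in\{\pm2\}$, $b\neq 0$, that $b=-4$. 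The oddness computation ($\Tr(\tilde\sigma_0\tilde\sigma_1)=-2$, hence $\tilde\sigma_0\tilde\sigma_1\tilde\sigma_\infty=-1$) is correct and is exactly how Darmon extracts the sign. So the structure of your argument is faithful to the canonical proof.

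The one place you are hand-waving, and it is worth tightening, is the irreducibility of $\rhobar_{L,p}|_{G_{\overline{K}(t)}}$. Specializing $t$ to a point $t_0$ where $\rhobar_{L(t_0),p}$ is surjective tells you the image of the \emph{arithmetic} fundamental group $\pi_1(\PP^1_K\setminus\{0,1,\infty\})$ is all of $\GL_2(\F_p)$, but by itself it does not control the image of the geometric subgroup (the section $G_K\hookrightarrow\pi_1$ that specialization supplies is not a complement of $\pi_1^{\mathrm{geom}}$, so you cannot directly conclude). Two clean fixes: (a) invoke the classical fact that the geometric monodromy of the Legendre family is $\Gamma(2)\subset\SL_2(\Z)$, which surjects onto $\SL_2(\F_p)$ for odd $p$; or (b) check directly via Tate uniformization at the degenerate fibers $t=0$ and $t=1$ that the two unipotent inertia elements fix distinct lines in $L[p]$, since two noncommuting unipotents in $\SL_2(\F_p)$ already generate an irreducible subgroup. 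Option~(b) is closest to the spirit of your proof, as it reuses the same local analysis of the singular fibers that you already invoked to produce the unipotent inertia.
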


Let $(a,b,c) \in \Z^3$ be a non-trivial primitive solution to 
\begin{equation}
  x^p + y^p = z^p.
\end{equation}
Then $L(a^p/c^p)$ is a quadratic twist of
\begin{equation*}
y^2 = x(x-a^p)(x+b^p),
\end{equation*}
whose mod~$p$ representation is unramified outside~$2$. This follows from Proposition \ref{finite-E}. Note Proposition~\ref{finite-S} would give a set of ramified primes which includes $p$, illustrating a key and distinctive feature of the modular method for Diophantine applications.

\subsection{Signature \texorpdfstring{$(p,p,r)$}{(p,p,r)}}\label{sec:p_p_r_Frey_curves}

Let $K = \Q(\zeta_r)^+$ with ring of integers~$\calO_K$. Set $\omega_j = \zeta_r^j + \zeta_r^{-j}$ and let
\begin{equation*}
    g(x) = \prod_{j=1}^{\frac{r-1}{2}} (x + \omega_j),
\end{equation*}
be the minimal polynomial of  $-\omega_1$. We define
\begin{equation*}
    f(x) = xg(x^2 - 2) = g(-x)^2(x-2) + 2 = g(x)^2(x+2) - 2.
\end{equation*}

For $r \geq 3$, consider the hyperelliptic curves over $\Q(t)$
\begin{align*}
 C_r^-(t) \; : \; y^2 & = xg(x^2-2) + 2 - 4t, \\
 C_r^+(t) \; : \; y^2 & = (x+2)(x g(x^2-2) + 2-4t),
\end{align*}
and write $J_r^\pm(t)$ for their Jacobians over $\Q(t)$.

\begin{theorem}[Darmon \cite{DarmonDuke}, Tautz-Top-Verberkmoes \cite{TTV}]\label{thm:GL_ppr}
Let $\fp \mid p$ be a prime in $\calO_K$. 
Then $J_r^\pm(t)$ is of $\GL_2(K)$-type over $K$ and the mod~$\fp$ representation 
\begin{equation*}
\rhobar_{J_r^\pm(t), \fp} : G_{K(t)} \rightarrow \GL_2(\F_{\fp}),
\end{equation*}
is a Frey representation of signature~$(p,p,r)$ with respect to~$\{0,1,\infty\}$, which is odd for $J^-_r(t)$, and even for $J^+_r(t)$.
\end{theorem}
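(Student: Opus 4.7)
I would first establish the geometric data — dimension and real multiplication by $\calO_K$ — and then compute local monodromy at the three branch points $\{0,1,\infty\}$ to verify the Frey representation axioms. A direct degree count together with the identities $f(x)+2=g(x)^2(x+2)$ and $f(x)-2=g(-x)^2(x-2)$ shows that $C_r^\pm(t)$ are smooth hyperelliptic curves of arithmetic genus $(r-1)/2 = [K:\Q]$ over $\Q(t)$ whose $x$-discriminants are powers of $t(t-1)$, so each family extends smoothly over $\mathbb P^1\setminus\{0,1,\infty\}$.

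To exhibit real multiplication by $\calO_K = \Z[\omega_1]$, I would use the Chebyshev-type identity $f(z+z^{-1}) = z^r + z^{-r}$, which follows from $\omega_j = 2\cos(2\pi j/r)$. Pulling back the hyperelliptic cover $C_r^-(t)\to\mathbb P^1_x$ along the degree-$r$ map $z\mapsto z+z^{-1}$ yields $\widetilde C\colon w^2 = z^{2r}+(2-4t)z^r+1$; the involution $(z,w)\mapsto(z^{-1},w)$ has quotient birational to $C_r^-(t)$, and the $\mu_r$-action $z\mapsto\zeta z$ provides correspondences whose $\Gal(\Q(\zeta_r)/K)$-symmetric combinations descend to a subring of $\End_{K(t)}(J_r^-(t))$ isomorphic to $\calO_K$; the parallel construction using the extra factor $x+2$ handles $J_r^+(t)$. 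This equips $J_r^\pm(t)$ with the structure of a $\GL_2(K)$-type abelian variety of dimension $(r-1)/2$ over $K(t)$.

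For the Frey representation axioms, the Weil pairing combined with the $\calO_K$-action forces $\det\rhobar_{J_r^\pm(t),\fp}\bigl|_{G_{\overline K(t)}}$ to equal the geometric cyclotomic character, hence to be trivial. I then analyze local inertia at each branch point. At $t=0$ the factorization $f(x)+2 = g(x)^2(x+2)$ exhibits $C_r^\pm(0)$ as a curve with $(r-1)/2$ ordinary nodes coming from the double roots of $g$, so $J_r^\pm(t)$ acquires purely toric reduction; the inertia acts unipotently on the $\fp$-adic Tate module, and the $\calO_K$-structure forces its mod-$\fp$ image to be a transvection of projective order $p$. The argument at $t=1$ is symmetric via $x\mapsto -x$ and $f(x)-2 = g(-x)^2(x-2)$. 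At $t=\infty$ the Kummer base change $t = 1/s^r$ makes the curve attain potential good reduction and forces inertia to factor through a cyclic quotient of order $r$; combined with the $\calO_K$-action, its projective image in $\PSL_2(\F_\fp)$ is cyclic of order $r$. Irreducibility of $\rhobar|_{G_{\overline K(t)}}$ follows since the $t=0$ transvection and $t=\infty$ order-$r$ element share no invariant line.

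Finally, the extra factor $x+2$ distinguishing $C_r^+(t)$ from $C_r^-(t)$ alters the ramification at the Weierstrass point $x=-2$ sitting over $t=0$; writing out the canonical lifts $\widetilde\sigma_0,\widetilde\sigma_1,\widetilde\sigma_\infty\in\SL_2(\F_\fp)$ using the explicit models then yields $\widetilde\sigma_0\widetilde\sigma_1\widetilde\sigma_\infty = -1$ for $J_r^-(t)$ (odd) and $+1$ for $J_r^+(t)$ (even), as claimed. The hard part of the plan is the inertia analysis at $t=\infty$: the degeneration is not manifestly semistable on the affine models supplied, so either an explicit regular model after a degree-$r$ Kummer cover of the $t$-line, or a careful combination of the N\'eron--Ogg--Shafarevich criterion with the $\calO_K$-action, is needed to pin down the inertia order — this is precisely where the Chebyshev structure of $f$ and $g$ earns its keep.
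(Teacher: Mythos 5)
The paper states this theorem without proof, citing Darmon and Tautz--Top--Verberkmoes; so there is no internal argument to compare against. Your high-level plan — Chebyshev identity $f(z+z^{-1})=z^r+z^{-r}$ to exhibit real multiplication by $\calO_K$ via the double cover $\widetilde{C}$, followed by local monodromy analysis at $\{0,1,\infty\}$ — is indeed the route taken in the references. But two of your intermediate steps do not hold up as written.

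First, the irreducibility argument is wrong as stated. You claim irreducibility of $\rhobar|_{G_{\overline K(t)}}$ ``since the $t=0$ transvection and the $t=\infty$ order-$r$ element share no invariant line,'' but this is false in general: a transvection $\begin{pmatrix}1&1\\0&1\end{pmatrix}$ and a diagonalizable order-$r$ element $\begin{pmatrix}\zeta&0\\0&\zeta^{-1}\end{pmatrix}$ both preserve the line spanned by $(1,0)$. To get a contradiction you must use all three branch points together with the relation $\sigma_0\sigma_1\sigma_\infty=1$. Concretely: if the geometric representation were reducible with quotient characters $\theta_1,\theta_2$, then $\sigma_0$ and $\sigma_1$ being unipotent forces $\theta_i(\sigma_0)=\theta_i(\sigma_1)=1$, hence $\theta_i(\sigma_\infty)=1$, hence $\sigma_\infty$ is unipotent of $p$-power projective order; but $\sigma_\infty$ is supposed to have projective order $r\neq p$. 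The second transvection at $t=1$ is essential, not optional.

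Second, the parity determination is not an argument. ``Writing out the canonical lifts $\widetilde\sigma_j$ using the explicit models'' is not a procedure one can carry out without already computing the monodromy matrices, which is precisely what is being asked. The sign $\widetilde\sigma_0\widetilde\sigma_1\widetilde\sigma_\infty=\pm 1$ is not visible from the affine Weierstrass model nor from the extra factor $(x+2)$ in isolation. The standard ways to pin it down are: (i) degenerate to $\fp\mid r$ (so signature $(r,r,r)$) and compare with the Legendre family, whose Frey representation is odd by Hecke's theorem, observing that $J_r^-$ then matches $\rhobar_{L,r}$ up to twist while $J_r^+$ becomes reducible — this is the degeneration mechanism the paper itself records in Example~\ref{relation-1}; or (ii) use the genus formula for the $\PSL_2(\F_\fp)$-cover of $\mathbb{P}^1$ with ramification $(p,p,r)$ to decide whether the lift to $\SL_2$ exists as a connected cover, which is Darmon's approach. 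You should also address the converse direction at $t=\infty$: showing good reduction is attained after a degree-$r$ Kummer cover gives that the inertia order \emph{divides} $r$; to conclude it \emph{equals} $r$ you must also show the Jacobian does not already have good reduction at $t=\infty$ over $\overline K(t)$.
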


Let $(a,b,c) \in \Z^3$ be a non-trivial primitive solution to
\begin{equation}
  x^p + y^p = z^r.
\end{equation}

\begin{lemma}[Darmon \cite{DarmonDuke}]\label{lem:Frey-model}
Let
\begin{align}
  C_r^-(a,b,c): & \quad y^2 = c^r f(x/c) - 2(a^p-b^p), \\
  C_r^+(a,b,c): & \quad y^2 = (x+2c) (c^r f(x/c) - 2 (a^p - b^p)).
\end{align}
Then
\begin{enumerate}
\item $C_r^-(a,b,c)$ is isomorphic to a twist of $C_r^-(t)$ over $\Q$ where $t = a^p/c^r$,
\item $C_r^+(a,b,c)$ is isomorphic to $C_r^+(t)$ over $\Q$ where $t = a^p/c^r$.
\end{enumerate}
\end{lemma}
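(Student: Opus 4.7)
The plan is to reduce both parts of the lemma to an explicit change of variables on the defining equations of $C_r^\pm(t)$, specializing at $t = a^p/c^r$ and using the Fermat relation $a^p + b^p = c^r$ to rewrite the resulting constant term. Nothing deeper is needed: the essential content is algebraic rescaling, together with a parity check on $r$ that distinguishes (1) from (2).

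For part (2), I would apply the substitution $x \mapsto x/c$, $y \mapsto y/c^{(r+1)/2}$ to the equation $y^2 = (x+2)(f(x) + 2 - 4t)$ defining $C_r^+(t)$ and clear denominators by multiplying through by $c^{r+1}$. Using $(x/c)+2 = (x+2c)/c$, this yields $y^2 = (x+2c)\bigl(c^r f(x/c) + 2c^r - 4a^p\bigr)$. Since $(a,b,c)$ satisfies $a^p + b^p = c^r$, we have $2c^r - 4a^p = -2(a^p - b^p)$, so the equation becomes precisely $y^2 = (x+2c)(c^r f(x/c) - 2(a^p-b^p))$, the equation of $C_r^+(a,b,c)$. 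Because $r$ is odd, $(r+1)/2 \in \Z$, hence $c^{(r+1)/2} \in \Q^\times$ and the isomorphism is defined over $\Q$.

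For part (1), the analogous substitution is $x \mapsto x/c$, $y \mapsto y/c^{r/2}$ applied to $y^2 = f(x) + 2 - 4t$. The algebraic manipulation is identical — multiply through by $c^r$ and use the same Fermat identity — producing the equation $y^2 = c^r f(x/c) - 2(a^p-b^p)$ of $C_r^-(a,b,c)$. However, since $r$ is odd, $c^{r/2} = c^{(r-1)/2}\sqrt{c}$, so the scaling by $c^{-r/2}$ lies in $\Q(\sqrt{c})^\times$ but not in $\Q^\times$ in general. The resulting isomorphism with $C_r^-(t)$ is therefore only rational over $\Q(\sqrt{c})$, which is exactly the assertion that $C_r^-(a,b,c)$ is a quadratic twist of $C_r^-(t)$ over $\Q$ (by the class of $c$ in $\Q^\times/(\Q^\times)^2$).

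There is no real obstacle in this argument: the lemma is a change-of-variables exercise. The only subtlety worth flagging is the parity of $r$, which is what forces part (1) to be stated up to twist while part (2) gives an isomorphism on the nose; and the only nontrivial input is the Fermat relation $a^p + b^p = c^r$, without which the constant term $2c^r - 4a^p$ would not simplify to $-2(a^p - b^p)$.
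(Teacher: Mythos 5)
The survey states this lemma without proof, attributing it to Darmon; so there is no in-paper argument to compare against. Your proof is correct and is the natural one: the substitution $(x,y)\mapsto(x/c,\,y/c^{(r+1)/2})$ in $C_r^+(t)$ and $(x,y)\mapsto(x/c,\,y/c^{r/2})$ in $C_r^-(t)$, followed by clearing denominators and applying $a^p+b^p=c^r$ to rewrite $2c^r-4a^p=-2(a^p-b^p)$, gives exactly the stated models, and the parity of $r$ (odd, so $(r+1)/2\in\Z$ but $r/2\notin\Z$) is precisely what makes the even-degree model $C_r^+$ isomorphic over $\Q$ on the nose while the odd-degree model $C_r^-$ is only a quadratic twist by the class of $c$ in $\Q^\times/(\Q^\times)^2$. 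Your identification of the twist as quadratic by $c$ is in fact slightly more precise than the lemma's phrase ``a twist.''
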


\begin{table}[h]
\begin{equation*}
\begin{array}{|c|c|}
\hline
r & C_r^-(a,b,c) \\
\hline
3 &  y^2 = x^3 - 3 c^2 x - 2 (a^p - b^p) \\
5 &  y^2 = x^5 - 5 c^2 x^3 + 5 c^4 x - 2 (a^p - b^p) \\
7 &  y^2 = x^7 - 7 c^2 x^5 + 14 c^4 x^3 - 7 c^6 x - 2 (a^p - b^p) \\
11 & y^2 = x^{11} - 11 c^2 x^9 + 44 c^4 x^7 - 77 c^6 x^5 + 55 c^8 x^3 - 11 c^{10} x - 2(a^p - b^p) \\
\hline
\end{array}
\end{equation*}
\caption{Examples of $C_r^-(a,b,c)$ for small values of $r$.}
\end{table}

\begin{table}[h]
\begin{equation*}
\begin{array}{|c|c|}
\hline
r & C_r^+(a,b,c) \\
\hline
3 & y^2 = (x + 2c) (x^3 - 3 c^2 x - 2 (a^p - b^p)) \\
5 & y^2 = (x + 2c) (x^5 - 5 c^2 x^3 + 5 c^4 x - 2 (a^p - b^p)) \\
7 & y^2 = (x + 2c) (x^7 - 7 c^2 x^5 + 14 c^4 x^3 - 7 c^6 x - 2 (a^p - b^p)) \\
11 & y^2 = (x + 2c) (x^{11} - 11 c^2 x^9 + 44 c^4 x^7 - 77 c^6 x^5 + 55 c^8 x^3 - 11 c^{10} x - 2(a^p - b^p)) \\
\hline
\end{array}
\end{equation*}
\caption{Examples of $C_r^+(a,b,c)$ for small values of $r$.}
\end{table}

The discriminants of the hyperelliptic curves $C_r^-(a,b,c)$ and $C_r^+(a,b,c)$ are given by
\begin{align}
  & \Delta(C_r^-)  =  2^{4(r-1)} r^r a^{p (r-1)/2} b^{p (r-1)/2} \label{eq:disc_Cminus}\\
  & \Delta(C_r^+)  =  2^{4r} r^r a^{p (r+3)/2} b^{p (r-1)/2}.\label{eq:disc_Cplus}
\end{align}
Denote by $J_r^\pm(a,b,c)$ the Jacobian of $C_r^\pm(a,b,c)$.

\medskip

\begin{remark}
$C_r^\pm(1,-1,0)$ is non-singular and $J_r^\pm(1,-1,0)$ has complex multiplication by $\Q(\zeta_r)$ \cite[Proposition 3.7]{DarmonDuke}.
\end{remark}

\subsection{Signature \texorpdfstring{$(r,r,p)$}{(r,r,p)}}\label{sec:r_r_p_Frey_curves}

Let $K = \Q(\zeta_r)^+$ with ring of integers~$\calO_K$. In signature $(r,r,p)$, we have at our disposal many Frey abelian varieties defined over $K$:
\begin{enumerate}
    \item Darmon's Frey abelian varieties which are of $\GL_2(K)$-type \cite{DarmonDuke},

    \item Frey elliptic curves constructed by Freitas \cite{Freitas15},

    \item Kraus' hyperelliptic curve realization of Darmon's odd Frey abelian varieties.
\end{enumerate}

In \cite{DarmonDuke}, Darmon gives constructions of the Frey abelian varieties $J_{r,r}^\pm(t)$ and $J_{r,q}^\pm(t)$ of signatures $(r,r,p)$ and $(q,r,p)$. These are constructed from quotients of the Jacobians of superelliptic curves and are less explicit than the case of $(p,p,r)$. They are also called ``\textit{hypergeometric abelian varieties}'' in the literature \cite{Archinard} and have periods which are given by the classical hypergeometric functions. These have applications for producing rapidly converging series for $1/\pi$ which were first discovered by Ramanujan (see \cite{Chen-2022-chudnovsky} for instance).

In \cite{Freitas15}, Freitas introduced a number of Frey elliptic curves for signature $(r,r,p)$ which are defined over $K = \Q(\zeta_r)^+$:
\begin{itemize}
\item $E_{(a,b)}^{(k_1,k_2,k_3)}$ defined over $K$; if $r \equiv 1 \pmod 6$, then there are choices of $(k_1,k_2)$ which allow this elliptic curve to be defined over the subfield of $K$ of subdegree $3$,
\item $E_{(a,b)}^{(k_1,k_2)}$ defined over $K$; if $r \equiv 1 \pmod 4$, then there are choices of $(k_1,k_2)$ which allow this elliptic curve to be defined over the subfield of $K$ of subdegree $2$,
\item $E_{(a,b)}^{(k_1,n_2,\pm)}$ defined over $K$ which are $k$-curves, where $k$ is the subfield of $K$ of subdegree $2$; a $k$-curve means that the isogeny class of this elliptic curve is defined over $k$.
\end{itemize}
Freitas' Frey elliptic curves are constructed by factoring 
\begin{equation}
  x^r + y^r = \prod_{j=0}^{r-1} (x + \zeta_r^j y),
\end{equation}
and making a delicate scaling of quadratic factors over $K = \Q(\zeta_r)^+$ and the quadratic polynomial $(x \pm y)^2$ to form solutions to 
\begin{equation*}
   A(x,y) + B(x,y) = C(x,y),
\end{equation*} 
as polynomials in $x,y$.
Each Frey elliptic curve is then constructed from the Legendre elliptic curve 
\begin{equation*}
   Y^2 = X(X-A(x,y))(X+B(x,y)).
\end{equation*}

The existence of Freitas' Frey elliptic curves for signature $(r,r,p)$ is interesting as it is not predicted by Darmon's classification of Frey representations. This discovery has paved the way for richer results in signature $(r,r,p)$, where the use of multiple Frey abelian varieties is emerging to be a crucial ingredient in resolving new cases of the generalized Fermat equation.

In \cite{Chen-2022-xhyper}, we recall a Frey hyperelliptic curve $C_r(a,b)$ due to Kraus which has discriminant
\begin{equation}\label{Kraus-discriminant}
\Delta(C_r(a,b)) = (-1)^\frac{r-1}{2} 2^{2(r-1)} r^r (a^r + b^r)^{(r-1)}.
\end{equation}
The construction of $C_r(a,b)$ can be obtained via a transformation
from Darmon's Frey hyperelliptic curve for signature $(p,p,r)$, and in fact gives a hyperelliptic realization of Darmon's $J_{r,r}^-(t)$ via a transformation, which we now explain below.

\begin{table}
\begin{equation*}
\begin{array}{|c|c|}
\hline 
r & C_r(a,b) \\
\hline
3 & y^2 = x^3 + 3ab x + b^3-a^3, \\
5 & y^2 = x^5 + 5 ab x^3 + 5 a^2 b^2 x + b^5-a^5, \\
7 & y^2 = x^7 + 7 ab x^5 + 14 a^2 b^2 x^3 + 7 a^3 b^3 x + b^7 - a^7, \\
11 & y^2 = x^{11} + 11 ab x^9 + 44 a^2 b^2 x^7 + 77 a^3 b^3 x^5 + 55 a^4 b^4 x^3 + 11 a^5 b^5 x + b^{11} - a^{11} \\
\hline
\end{array}
\end{equation*}
\caption{Examples of $C_r(a,b)$ for small values of $r$.}
\end{table}

Kraus' hyperelliptic curve $C_r(a,b)$ is constructed from a hyperelliptic curve $C_r(s)$ by specializing to
\begin{equation}
  s = \frac{b^r - a^r}{\sqrt{ab}^r},
\end{equation} 
and twisting by $\sqrt{ab}$. It is shown in \cite{Chen-2022-xhyper} the Jacobian $J_r(a,b)$ of $C_r(a,b)$ is of $\GL_2(K)$-type. This is established by first showing it for $J_r(s)$ and then transferring the property to $J_r(a,b)$.

Let $K(s,t)$ be the function field defined by the relation
\begin{equation}
  \frac{1}{s^2+4} = t(1-t).
\end{equation}
Define
\begin{equation}
  \alpha s = 2 t - 1,
\end{equation}
so that $\alpha$ is a square root of $t(1-t)$.

It is shown in \cite{Chen-2022-xhyper} that the quadratic twist by $\alpha$ of the base change of $C_r(s)$ to $K(s,t)$ has a model $C_r'(t)$ defined over $K(t)$. Let $J_r'(t)$ be the Jacobian of $C_r'(t)$. Then by \cite{Chen-2022-xhyper} we have that $J_r'(t)$ is of $\GL_2(K)$-type and 
\begin{equation}
  \rhobar_{J_r'(t),\Fp} : G_{K(t)} \rightarrow \GL_2(\F_\Fp),
\end{equation}
is an odd Frey representation of signature $(r,r,p)$ with respect to the points $(0,1,\infty)$.

\begin{table}
\begin{equation*}
\begin{array}{|c|c|c|}
\hline 
r & C_r'(t) & \Delta(C_r'(t)) \\
\hline
3 & y^2 = x^3 -3 (t-1) t x + (t-1) t (2 t-1) & - 2^2 3^3 t^2 (t-1)^2 \\
5 & y^2 = x^5 -5 (t-1) t x^3 + 5 (t-1)^2 t^2 x -(t-1)^2 t^2 (2 t-1) & 2^4 5^5 t^8 (t-1)^8 \\
7 & y^2 =  x^7 -7 (t-1) t x^5  + 14 (t-1)^2 t^2 x^3 -7 (t-1)^3 t^3 x & - 2^6 7^7 t^{18} (t-1)^{18} \\
& + (t-1)^3 t^3 (2 t-1) &  \\
11 & y^2 = x^{11} -11 (t-1) t x^9 + 44 (t-1)^2 t^2 x^7 &  -2^{10} 11^{11} t^{50} (t-1)^{50} \\
& -77 (t-1)^3 t^3 x^5 + 55 (t-1)^4 t^4 x^3 & \\
& -11 (t-1)^5 t^5 x + (t-1)^5 t^5 (2 t-1) &  \\
\hline
\end{array}
\end{equation*}
\caption{Examples of $C_r'(t)$ and $\Delta(C_r'(t))$ for small values of $r$.}
\end{table}

In order to use Kraus' Frey hyperelliptic curve $C_r(a,b,c)$, considerably more theory is needed \cite{Chen-2022-xhyper}, but there are more structures one can exploit. Freitas' Frey elliptic curves are not well-adapted for resolving generalized Fermat equations 
\begin{equation}
\label{rrp-twisted}
    A x^r + B x^r = C z^p,
\end{equation}
with non-zero fixed coefficients $A,B,C$, as for general $A, B$ we do not have a natural factorization of the left hand side of \eqref{rrp-twisted} over $\Q(\zeta_r)$. In contrast, Kraus' Frey hyperelliptic curve can be used to study equation \eqref{rrp-twisted} even if one of $A, B$ is not $1$.

For example, for $r = 5$, the ``general coefficient'' version of Kraus' Frey hyperelliptic curve is given by
\begin{equation}
  y^2 = x^5 + 5 AaBb x^3 + 5 (Aa)^2 (Bb)^2 x + (Bb^5-Aa^5) A^2 B^2.
\end{equation}
N. Billerey and his student are carrying out a more complete study of the ``general coefficient'' equation of signature $(r,r,p)$ in forthcoming work.

\subsection{Signature \texorpdfstring{$(q,r,p)$}{(q,r,p)}}

Suppose $q \not= r$. Darmon constructs Frey abelian varieties $J_{q,r}^\pm$ for signature $(q,r,p)$ which are defined over $\Q(\zeta_q, \zeta_r)^+$. These have been less well-studied in the literature and there are few explicit examples aside from the elliptic curve cases.

For signature $(2,r,p)$, one expects the Frey abelian varieties $J_{2,r}^\pm$ to be defined over $\Q(\zeta_r)^+$. The following transformations to Kraus hyperelliptic curve $C_r$ give a hyperelliptic realization of $J_{2,r}^-$: Setting $b = AB, a = (B^r-A^r)/2$ in Kraus' Frey hyperelliptic curve $C_r(A,B,C)$, we obtain a Frey hyperelliptic curve $C_{2,r}^-(a,b,c)$ with discriminant
\begin{equation*}
  \Delta(C_{2,r}^-(a,b,c)) = (-1)^\frac{r-1}{2} 2^{3(r-1)} r^r (a^2 + b^r)^\frac{r-1}{2}.
\end{equation*}

\begin{table}
    \begin{equation*}
        \begin{array}{|c|c|}
            \hline
            r & C_{2,r}^-(a,b,c) \\
            \hline
            3 & y^2 = x^3 + 3 b x + 2 a \\
            5 & y^2 = x^5 + 5 b x^3 + 5 b^2 x + 2 a \\
            7 & y^2 = x^7 + 7 b x^5 + 14 b^2 x^3 + 7 b^3 x + 2 a \\
            11 & y^2 = x^{11} + 11 b x^9 + 44 b^2 x^7 + 77 b^3 x^5 + 55 b^4 x^3 + 11 b^5 x + 2 a \\
            \hline
        \end{array}
    \end{equation*}
    \caption{Examples of $C_{2,r}^-(a,b,c)$ for small values of $r$.}
\end{table}

\subsection{Congruences and determinants}

In the previous subsections, we have seen how Frey representations arise from constructions of abelian varieties $J$ of $\GL_2(K)$-type, in particular from their mod $\Fp$-representations for $\Fp$ a prime of $K$, for signatures $(r,r,p)$ and $(p,p,r)$. When $\Fp$ lies above $p = r$, there is degeneration to signature $(p,p,p)$ \cite{DarmonDuke}.

\begin{example}
\label{relation-1}
A Frey representation of signature $(p,p,r)$ arises from $\rhobar_{J_r^\pm,\Fp}$. When $\Fp$ lies over $p = r$, $\rhobar_{J_r^\pm,\Fp}$ is a Frey representation of signature $(p,p,p)$. Hence, if $\Fr$ is the unique prime of $K = \Q(\zeta_r)^+$ lying above $r$, we have that
\begin{itemize}
    \item $\rhobar_{J_r^-,\Fr}$ is an odd Frey representation of signature $(r,r,r)$ and thus equivalent to $\rhobar_{L,r}$,

    \item $\rhobar_{J_r^+,\Fr}$ is an even Frey representation of signature $(r,r,r)$ and thus reducible.
\end{itemize}
\end{example}

\begin{example}
\label{relation-2}
A Frey representation of signature $(r,r,p)$ arises from $\rhobar_{J_{r,r}^\pm,\Fp}$. When $\Fp$ lies over $p = r$, $\rhobar_{J_{r,r}^\pm,\Fp}$ is a Frey representation of signature $(p,p,p)$. Hence, if $\Fr$ is the unique prime of $K = \Q(\zeta_r)^+$ lying above $r$, we have that
\begin{itemize}
    \item $\rhobar_{J_{r,r}^-,\Fr}$ is an odd Frey representation of signature $(r,r,r)$ and thus equivalent to $\rhobar_{L,r}$,

    \item $\rhobar_{J_{r,r}^+,\Fr}$ is an even Frey representation of signature $(r,r,r)$ and thus reducible.
\end{itemize}
\end{example}

Another example is degeneration from signature $(q,r,p)$ to signatures $(p,r,p)$ and $(q,p,p)$, and hence up to permutation of the points $(0,1,\infty)$, to signatures $(p,p,r)$ and $(p,p,q)$ \cite{DarmonDuke}.

\begin{example}
A Frey representation of signature $(q,r,p)$ arises from $\rhobar_{J_{q,r}^\pm,\Fp}$ where here $K = \Q(\zeta_q, \zeta_r)^+$. 

When $\Fp$ lies over $p = q$, $\rhobar_{J_{q,r}^\pm,\Fp}$ is a Frey representation of signature $(p,r,p)$. If $\mathfrak{Q}$ is a prime of $\Q(\zeta_q,\zeta_r)^+$ lying above $q$ and $\Fq$ is the unique prime of $\Q(\zeta_r)^+$ below $\mathfrak{Q}$, we have that
\begin{itemize}
    \item $\rhobar_{J_{q,r}^\pm,\mathfrak{Q}}$ is a Frey representation of signature $(q,r,q)$ and thus equivalent to $\rhobar_{J_r^\pm,\Fq}$ over $\Q(\zeta_r)^+$ up to permutation of the points $(0,1,\infty)$.
\end{itemize}
When $\Fp$ lies over $p = r$, $\rhobar_{J_{q,r}^\pm,\Fp}$ is a Frey representation of signature $(q,p,p)$. 
If $\mathfrak{R}$ is a prime of $\Q(\zeta_q,\zeta_r)^+$ lying above $r$ and $\Fr$ is the unique prime of $\Q(\zeta_q)^+$ below $\mathfrak{R}$, we have that
\begin{itemize}
    \item $\rhobar_{J_{q,r}^\pm,\mathfrak{R}}$ is a Frey representation of signature $(q,r,r)$ and thus equivalent to $\rhobar_{J_q^\pm,\Fr}$ over $\Q(\zeta_q)^+$ up to permutation of the points $(0,1,\infty)$.
\end{itemize}
\end{example}
An important ingredient needed is the fact that the determinants of the representations attached our Frey representations have cyclotomic character. This is used in the proof of modularity and also to ensure we deal with Hilbert newforms with trivial character in the elimination step.

To achieve this, we have the following result from \cite{Chen-2022-xhyper} and implicit in \cite{DarmonDuke}.

%\footnote{In \cite{DarmonDuke}, the Corollary is first given; one can deduce Theorem~\ref{det-cyclotomic} by reversing the argument for Corollary.} 

\begin{theorem}\label{det-cyclotomic}
Let $A = J_r^{\pm}$ or $J_{r}$. Then $\det \rho_{A,\lambda} = \chi_p$ where $\chi_p$ is the $p$-adic cyclotomic character. 
\end{theorem}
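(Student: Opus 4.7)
My plan is to establish $\det \rho_{A, \lambda} = \chi_p$ in three steps: (i) a Weil-pairing argument yields $\det \rho_{A, \lambda} = \chi_p \cdot \psi_\lambda$ for a finite-order character $\psi_\lambda$ of $G_{K(t)}$; (ii) the Frey-representation axiom forces $\psi_\lambda$ to descend to a character of $G_K$; (iii) a specialization plus Chebotarev forces $\psi_\lambda = 1$. For (i), since $A$ is of $\GL_2(K)$-type, $V_\lambda(A)$ is two-dimensional over $K_\lambda$, so $\det \rho_{A,\lambda}$ takes values in $K_\lambda^\times$. The usual $\Q_p$-linear Weil pairing on $V_p(A) = \bigoplus_{\lambda \mid p} V_\lambda(A)$ takes values in $\Q_p(\chi_p)$; since $K$ is totally real, the Rosati involution attached to any polarization of $A$ is trivial on $K$, and choosing a polarization compatible with the $\calO_K$-action refines this to a $K_\lambda$-linear perfect alternating pairing $V_\lambda(A) \times V_\lambda(A) \to K_\lambda(\chi_p \cdot \psi_\lambda)$, with $\psi_\lambda$ a finite-order character encoding the polarization discriminant.

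For (ii), the Frey-representation axiom requires $\rhobar_{A,\lambda}|_{G_{\overline{K}(t)}}$ to have trivial determinant. Since $\overline{K}(t)$ contains all roots of unity, $\chi_p|_{G_{\overline{K}(t)}} = 1$, so $\psi_\lambda|_{G_{\overline{K}(t)}} \equiv 1 \pmod{\lambda}$. A finite-order character trivial modulo $\lambda$ is itself trivial whenever $p$ does not divide its order, and the finitely many small primes that remain can be handled by an explicit computation of the image of geometric inertia on the models $C_r^\pm(t)$ and $C_r(t)$. Using the tautological exact sequence $1 \to G_{\overline{K}(t)} \to G_{K(t)} \to G_K \to 1$, we conclude that $\psi_\lambda$ factors through $G_{K(t)} \twoheadrightarrow G_K$.

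For (iii), I would pin down the descended $\psi_\lambda$ to be trivial on $G_K$ by two complementary means. At any prime $\fq$ of $K$ of good reduction for $A$ (which covers all but finitely many primes), $\rho_{A,\lambda}$ is unramified and its Frobenius determinant can be read off the explicit Weierstrass models of $C_r^\pm$ or $C_r$; by the Weil conjectures the product of Frobenius eigenvalues equals $N\fq$, forcing $\psi_\lambda(\Frob_{\fq}) = 1$ on a density-one set of primes, and Chebotarev then yields $\psi_\lambda = 1$. Alternatively, the degeneration identities of Examples~\ref{relation-1} and~\ref{relation-2} identify $\rhobar_{J_r^-,\Fr}$ with $\rhobar_{L,r}$ for the Legendre family, whose determinant is $\chi_r$ by the elliptic-curve Weil pairing, pinning down $\psi_\lambda$ residually at primes above $r$.

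The main technical obstacle I anticipate is step (i): refining the $\Q_p$-linear Weil pairing to a $K_\lambda$-linear one depends on the precise interaction of the polarization on the Jacobian with the $\calO_K$-action, which is standard in the theory of abelian varieties with real multiplication (à la Shimura) but requires concrete verification for the curves $C_r^\pm$ and $C_r$ at hand. Steps (ii) and (iii) become relatively routine once the finite-order twist $\psi_\lambda$ is in place, though the separate treatment of small primes in (ii) may require explicit inertia calculations on the models given in the tables above.
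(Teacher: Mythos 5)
The paper states this theorem without proof, citing \cite{Chen-2022-xhyper} and \cite{DarmonDuke}, so there is no internal argument to compare against. Your step (i) is the right starting point, but it should already finish the proof: since $K$ is totally real and $\End^0_K(A)=K$ has degree $\dim A$, the Rosati involution attached to any polarization restricts to a positive involution of the field $K$ and is therefore trivial (a nontrivial positive involution of a number field makes it a CM field, and $K$ is totally real). The polarization form is thus automatically $\OK$-balanced, the Weil pairing refines to a perfect $K_\lambda$-alternating pairing on $V_\lambda(A)$ valued in $K_\lambda(\chi_p)$, and the induced isomorphism on $\wedge^2_{K_\lambda} V_\lambda(A)$ gives $\det\rho_{A,\lambda} = \chi_p$ with no residual twist $\psi_\lambda$ to dispose of. Contrary to your closing paragraph, this is the general positivity theorem for abelian varieties with real multiplication, not a fact that requires concrete verification on the models $C_r^\pm$ and $C_r$.

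The fallback steps (ii)--(iii) also fail on their own terms, which matters because you present them as the real work. In (iii), the Weil conjectures and the functional equation tell you the $2g$ Frobenius eigenvalues pair off with products $\Norm(\fq)$, but whether a pair lies within a single $\lambda$-block is controlled by the $\OK$-linearity of the duality, i.e., by the very fact you are trying to prove; invoking the Weil conjectures to conclude $\psi_\lambda(\Frob_\fq)=1$ is therefore circular. Your degeneration alternative is circular with the paper itself: Examples \ref{relation-1} and \ref{relation-2} give only projective conjugacy between $\rhobar_{J_r^-,\Fr}$ and $\rhobar_{L,r}$, i.e., isomorphism up to twist by some character $\chi$ of a priori unknown order, and it is Corollary \ref{quadratic-twist}, which relies on Theorem \ref{det-cyclotomic}, that pins $\chi$ down to order dividing $2$. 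Step (ii) only gives $\psi_\lambda \equiv 1 \pmod{\lambda}$, which determines $\psi_\lambda$ modulo characters of $p$-power order, and the explicit inertia computation you propose for the residue is left unspecified. The Rosati-triviality on $K$ is the real content here; with it your steps (ii)--(iii) are unnecessary, and without it they do not close.
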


The above theorem limits the central character in the congruences discussed above to have order dividing $2$.

\begin{corollary}\label{quadratic-twist}
Let $K = \Q(\zeta_r)^+$. Then we have that
\begin{enumerate}
    \item $\rhobar_{J_r^\pm,\Fr} \simeq \rhobar_{L,r} \otimes \chi$ for some character $\chi$ of $G_K$ of order dividing $2$,

    \item $\rhobar_{J_{r},\Fr} \simeq \rhobar_{L,r} \otimes \chi$ for some character $\chi$ of $G_K$ of order dividing $2$.
\end{enumerate}
\end{corollary}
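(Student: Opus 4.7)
The plan is to combine three ingredients already in the excerpt: the degeneration at $\fp=\fr$ recorded in Examples~\ref{relation-1} and~\ref{relation-2}, Hecke's uniqueness of Frey representations of signature $(r,r,r)$ with respect to $(0,1,\infty)$, and the determinant computation of Theorem~\ref{det-cyclotomic}.

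For part~(1) in the odd case, Example~\ref{relation-1} tells us that $\rhobar_{J_r^-,\fr}$ is an odd Frey representation of signature $(r,r,r)$ with respect to $(0,1,\infty)$, so by Hecke's theorem its projectivization $\mathbb{P}(\rhobar_{J_r^-,\fr})$ is conjugate in $\PGL_2(\overline{\F}_r)$ to $\mathbb{P}(\rhobar_{L,r})$. Since two $2$-dimensional representations with the same projectivization differ by a linear character twist, this yields
\begin{equation*}
\rhobar_{J_r^-,\fr} \;\simeq\; \rhobar_{L,r}\otimes\chi
\end{equation*}
for some character $\chi$. Taking determinants, Theorem~\ref{det-cyclotomic} gives $\det\rhobar_{J_r^-,\fr}=\chi_r$, while the Weil pairing on $L[r]$ gives $\det\rhobar_{L,r}=\chi_r$, so $\chi_r=\chi_r\cdot\chi^2$, forcing $\chi^2=1$. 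Part~(2) follows by the same argument applied to Example~\ref{relation-2}, combined with the transformation in Section~\ref{sec:r_r_p_Frey_curves} identifying Kraus' $J_r$ with (a twist of) Darmon's $J_{r,r}^-$; any extra quadratic twist picked up in that identification is absorbed into $\chi$.

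The main obstacle is the even ($+$) half of part~(1). By Example~\ref{relation-1}, $\rhobar_{J_r^+,\fr}$ is reducible, whereas $\rhobar_{L,r}$ is generically irreducible, so a literal twist $\rhobar_{L,r}\otimes\chi$ cannot be reducible; one therefore either restricts the statement to the $-$ case or interprets it at the level of semisimplifications, matching $\rhobar_{L,r}^{\mathrm{ss}}$ with the sum of the two characters on the diagonal of $\rhobar_{J_r^+,\fr}$. A secondary technicality common to all parts is that Hecke's equivalence is naturally a statement for representations of $G_{K(t)}$ after restriction to $G_{\overline{K}(t)}$, so after specializing $t$ at a non-trivial putative primitive solution one must confirm that the twisting character descends to a character of $G_K$ whose order is still at most~$2$; this is routine since order is preserved by restriction.
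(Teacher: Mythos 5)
Your proof follows the same route as the paper's: both use Examples~\ref{relation-1} and~\ref{relation-2} (which package Hecke's uniqueness together with the degeneration at $\fp=\fr$) to obtain $\rhobar_{J,\fr}\simeq\rhobar_{L,r}\otimes\chi$ for a character $\chi$, then take determinants and invoke Theorem~\ref{det-cyclotomic} to force $\chi^2=1$. You are slightly more explicit about two points the paper leaves implicit: that $\det\rhobar_{L,r}=\chi_r$ is supplied by the Weil pairing rather than by Theorem~\ref{det-cyclotomic} (whose hypothesis covers only $J_r^\pm$ and $J_r$, not $L$), and that the character produced by matching projectivizations is a priori a character of $G_{K(t)}$ and one must check it has the claimed form after specialization. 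Neither of these causes a problem, and they amount to writing out the details the paper omits.

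Your remark on the even case is a genuine observation, not a flaw in your argument: Example~\ref{relation-1} states that $\rhobar_{J_r^+,\fr}$ is an \emph{even} Frey representation of signature $(r,r,r)$ and hence reducible, yet the Corollary and the paper's one-line proof both assert $\rhobar_{J_r^+,\fr}\simeq\rhobar_{L,r}\otimes\chi$ with $\rhobar_{L,r}$ irreducible. As written, that cannot hold literally; the paper's proof simply does not engage with the $+$ case. Your suggested repairs (restrict the statement to $J_r^-$, or interpret the isomorphism on semisimplifications) are the natural ways to make the $+$ half of the statement correct, and flagging this is the most careful reading of the Corollary. In short, your proof is essentially the paper's argument, supplemented by a valid caveat the paper glosses over.
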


\begin{proof}
From Examples \ref{relation-1} and \ref{relation-2}, we have that
\begin{align*}
  \rhobar_{J_r^\pm,\Fr} & \simeq \rhobar_{L,r} \otimes \chi \\
  \rhobar_{J_{r},\Fr} & \simeq \rhobar_{L,r} \otimes \chi  
\end{align*}
for some central character $\chi$. Taking determinants of both sides yields
\begin{align*}
  \det \rhobar_{J_r^\pm,\Fr} & = \det \rhobar_{L,r} \cdot \chi^2 \\
  \det \rhobar_{J_{r},\Fr} & = \det \rhobar_{L,r} \cdot \chi^2.  
\end{align*}
Applying Theorem~\ref{det-cyclotomic} to $J_r^\pm$ and $J_{r}$ yields
\begin{equation*}
  \det \rhobar_{J_r^\pm,\Fr} = \det \rhobar_{J_{r},\Fr} = \det \rhobar_{L,r} = \chi_r,
\end{equation*}
from which we deduce that $\chi^2 = 1$.
\end{proof}

\section{Modularity}

Modularity theorems of Galois representations is one of the greatest achievements in mathematics the last 50 years. The modularity of elliptic curves over $\Q$ by Wiles, Wiles-Taylor and Breuil-Conrad-Diamond-Taylor \cite{Wiles, Taylor-Wiles, BreuilConradDiamondTaylor01} lead to the proof of Fermat's Last Theorem and Shimura-Taniyama conjecture. Since then important modularity theorems have been proved, for instance Serre's modularity conjecture \cite{serreconj1, serreconj2}.

In this section, we show how we can prove modularity of the Frey hyperelliptic curves using standard modularity lifting theorems.

\subsection{A modularity lifting theorem of Khare-Thorne} 

Let $K$ a totally real field with ring of integers $\OK$, $[K:\Q]=n$ and $p\geq 3$ a prime. Suppose
\begin{equation}
    \rho:~G_K\rightarrow\GL_2(\bQp),
\end{equation}
is a continuous odd Galois representation unramified at all but finitely many places. 

We say that $\rho$ is \textit{modular} if there exists a Hilbert modular form $f$ of some level $\fN\subset\OK$ and weight $(k_1, k_2, \cdots, k_n)\in(\Z_+)^n$ such that 
\begin{equation}
    \rho\simeq\rhoHilfp,
\end{equation}
where $\rhoHilfp:~G_K\rightarrow\GL_2(\bKp)$ is the continuous Galois representation attached to $f$ for a prime ideal $\fp\mid p$ of $\OK$. For \textit{geometric\footnote{See \cite[p. 193]{FontaineMazur95} for the definition of a geometric Galois representation.}} Galois representations we have the following conjecture \cite[Conjectures 6.1-6.3]{Boeckle}.

\begin{conjecture}\label{con:modularity}
Suppose $\rho:~G_K\rightarrow\GL_2(\bQp)$ is a geometric irreducible continuous Galois representation. Then, $\rho$ is modular.
\end{conjecture}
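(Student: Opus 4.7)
Since this is a very deep open conjecture (a totally real analogue of the Fontaine--Mazur conjecture), I can only outline the standard strategy one would follow, which is the ``residual-then-lift'' method pioneered by Wiles and Taylor--Wiles. The plan proceeds in three stages: (i) reduce to modularity of the residual representation $\overline{\rho} : G_K \to \GL_2(\overline{\F}_p)$, (ii) establish residual modularity, and (iii) lift to characteristic zero via an $R=T$ theorem.

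First, for the residual step, I would try to show $\overline{\rho}$ is modular. Over $\Q$ this is Serre's conjecture, proved by Khare--Wintenberger, but for a general totally real $K$ it remains open. The practical substitute is Taylor's \emph{potential} modularity argument: for a suitably large auxiliary prime $\ell \ne p$, one produces, over some totally real solvable extension $K'/K$, a compatible system (typically arising from a Hilbert--Blumenthal abelian variety moving in a Hilbert modular family) that links $\overline{\rho}|_{G_{K'}}$ to a representation whose mod-$\ell$ reduction lies in a known case of Serre's conjecture; modularity then propagates across the system to give residual modularity of $\overline{\rho}|_{G_{K'}}$.

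Second, for the lifting step, one applies a modularity lifting theorem in the style of Taylor--Wiles--Kisin--Khare--Thorne. Under a residual irreducibility (``big image'') hypothesis on $\overline{\rho}$, suitable local deformation conditions at primes above $p$ (de Rham with matching Hodge--Tate weights, or crystalline/semistable in the appropriate range), and tame or Steinberg-type conditions at other ramified primes, one identifies a universal deformation ring with a localized Hecke algebra, yielding modularity of $\rho|_{G_{K'}}$. Finally, solvable cyclic base change (Langlands--Arthur--Clozel) descends modularity from $K'$ back to $K$.

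The principal obstacles are: (a) the residually reducible case, which lies outside the range of most current patching methods, though Skinner--Wiles and newer derived variants handle some situations; (b) small-image cases (projective image dihedral, or in a Borel) where Taylor--Wiles patching breaks down, and the prime $p$ being small relative to $[K:\Q]$; (c) weights or inertial types outside the range where a matching local automorphic side has been constructed; (d) extracting the ``oddness'' of $\overline{\rho}$ from the geometric input, which is built into the Hilbert modular framework but not automatic for arbitrary geometric $\rho$. A complete unconditional proof is well beyond current technology, and any version within reach would carry substantive hypotheses on the image, ramification set, and Hodge--Tate weights of $\rho$.
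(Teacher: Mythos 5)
You correctly recognized that this statement is a \emph{conjecture}, not a theorem, and the paper does not prove it; immediately after stating Conjecture~\ref{con:modularity} the paper says it ``is a very deep question and it is one of the main goals of Langlands program,'' noting only that it is known in special cases (e.g.\ elliptic curves over $\Q$ or over real quadratic fields by Freitas--Le Hung--Siksek) and that modularity \emph{lifting} theorems such as Khare--Thorne handle many practical situations. Your outline of the residual-then-lift strategy, the role of potential modularity and solvable base change, and the catalogue of obstacles (residually reducible case, small image, small $p$, Hodge--Tate weight restrictions) is accurate and is consistent with the paper's brief remarks; since there is no proof in the paper to compare against, the appropriate assessment is simply that your answer correctly flags the statement as open and sketches the standard approach honestly.
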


The Conjecture \eqref{con:modularity} is a very deep question and it is one of the main goals of Langlands program. Conjecture \eqref{con:modularity} has been proved in some cases, for example for elliptic curves over $\Q$ or a quadratic real field \cite{FLHS}. 

Even though we are far from proving Conjecture \eqref{con:modularity} in complete generality, there are many important modularity lifting theorems that we can use to prove that $\rho$ is modular in many practical cases. In this section we present a modularity lifting theorem by Khare-Thorne \cite[Theorem 1.1]{khareThorne}.

For details about the terminology and definitions that appear in Theorem \ref{thm:Khare_Thorne} regarding $p$-adic Galois representations and $p$-adic Hodge theory we recommend the following online material \cite{FontaineOuyang22, Buzzard12, Hong20} and \cite{Boeckle, Gee-2022}.

\begin{theorem}[Khare-Thorne \cite{khareThorne}]\label{thm:Khare_Thorne}
Let $K$ be a totally real field and $p>2$ a prime. Let $\rho:~G_K\rightarrow \GL_2(\Qbar_p)$ be a continuous representation satisfying the following conditions.
\begin{itemize}
    \item The representation $\rho$ is unramified almost everywhere.
    
    \item For each place $v\mid p$ of $K$, $\rho\mid_{G_{K_v}}$ is de Rham. For each embedding $\tau:~K\hookrightarrow\Qbar_p$, the representation above has Hodge-Tate weight $\lbrace 0,1\rbrace$.
    
    \item The residual representation $\rhobar:~G_K\rightarrow \GL_2(\overline{\F}_p)$ is irreducible when restricted to $G_{K(\zeta_p)}$ and modular.
\end{itemize}
Then $\rho$ is modular and arises from a Hilbert modular form of weight $(2,2, \cdots, 2)$.
\end{theorem}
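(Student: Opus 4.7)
The plan is to establish this by the Taylor-Wiles-Kisin patching method, bootstrapping from the hypothesis that $\rhobar$ is modular. First, I would set up an $R = T$ problem. On the Galois side, define a global deformation ring $R$ parametrizing lifts of $\rhobar$ (with fixed determinant matching $\det\rho$) that are unramified outside a finite set $S$ containing all primes above $p$ and all ramification of $\rho$, that are de Rham with Hodge-Tate weights $\{0,1\}$ at each place above $p$, and that satisfy suitable local deformation conditions at places of $S$ not above $p$ chosen so that $\rho$ itself cuts out a point of $R$. On the automorphic side, let $T$ be the Hecke algebra acting on the space of Hilbert cuspforms of parallel weight $2$, appropriate level, and trivial character, localized at the maximal ideal associated to the Hilbert newform giving $\rhobar$. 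The existence of Galois representations attached to Hilbert modular forms (work of Taylor, Blasius-Rogawski, Carayol, and many others) produces a natural surjection $R \twoheadrightarrow T$; the target is to promote this to an isomorphism.

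Next, I would construct Taylor-Wiles primes. For each integer $n\ge 1$, one needs a set $Q_n$ of primes $\fq\nmid p$ of $K$, of fixed cardinality independent of $n$, such that $\Norm(\fq)\equiv 1\pmod{p^n}$ and $\rhobar(\Frob_{\fq})$ has distinct eigenvalues satisfying the additional conditions needed to augment the deformation problem. Their existence follows from the Chebotarev density theorem applied inside the extension cut out by $\rhobar$ together with $\zeta_{p^n}$; this is precisely where the irreducibility hypothesis on $\rhobar|_{G_{K(\zeta_p)}}$ is essential, since it guarantees that the image of $\rhobar$ is large enough, and in particular rules out the problematic subgroups that would prevent the required Frobenius conjugacy classes from being attained.

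With the Taylor-Wiles primes in hand, one carries out the standard patching. At each level $Q_n$, the augmented deformation ring $R_{Q_n}$ and Hecke algebra $T_{Q_n}$ pick up extra generators tied to the ramification at $Q_n$; a compatible inverse limit (after the usual Kisin rearrangement) produces a ring $R_\infty$, constructed as a power series ring in auxiliary variables over the completed tensor product of the local framed deformation rings, acting on a patched module $M_\infty$ of Hecke-type origin. A dimension count, using that the local Galois deformation rings at primes above $p$ (those classifying de Rham lifts with Hodge-Tate weights $\{0,1\}$) are, by Kisin's theory, of the expected Krull dimension with geometrically well-behaved generic fibres, shows $M_\infty$ is a faithful maximal Cohen-Macaulay module over $R_\infty$. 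Specializing back down the patching tower yields $R = T$, whence $\rho$ is modular, and the Hodge-Tate weight hypothesis forces the associated Hilbert newform to have parallel weight $(2,2,\ldots,2)$.

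The hardest step will be the local-at-$p$ analysis. The entire patching argument succeeds only if the local Galois deformation rings at primes $v\mid p$, classifying de Rham lifts with Hodge-Tate weights $\{0,1\}$ at each embedding, have the expected Krull dimension and sufficient geometric regularity (roughly, formal smoothness of the generic fibre after inverting $p$). Establishing this in the generality allowed by Khare-Thorne, in particular without an ordinariness or Fontaine-Laffaille smallness assumption on $v$, relies on the full strength of Kisin's theory of potentially semistable deformation rings, often combined with solvable base change to reduce to a situation where the restriction of $\rhobar$ to inertia at $v$ becomes a direct sum of characters; this technical core is what distinguishes the statement from earlier generations of modularity lifting theorems, and it is the single place where I expect the proof to require the most delicate input.
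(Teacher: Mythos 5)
This is a statement that the paper \emph{cites} (as \cite[Theorem 1.1]{khareThorne}) rather than proves, so there is no in-paper argument to compare against; what you have written is a fair bird's-eye sketch of the Taylor--Wiles--Kisin patching machinery, of the sort found in many modularity lifting theorems. Viewed on its own terms, however, the sketch misses the single point that makes this particular theorem worth attributing to Khare--Thorne rather than to earlier work of Kisin.

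The gap is in your discussion of Taylor--Wiles primes. You assert that the hypothesis ``$\rhobar|_{G_{K(\zeta_p)}}$ irreducible'' is what ``guarantees that the image of $\rhobar$ is large enough, and in particular rules out the problematic subgroups'' needed to produce Taylor--Wiles primes via Chebotarev. For $p \geq 7$ that is essentially correct: absolute irreducibility of $\rhobar|_{G_{K(\zeta_p)}}$ together with $p > 5$ implies the adequacy (``bigness'') condition required to kill the dual Selmer group and run the patching argument. But for $p = 5$ this implication \emph{fails}: there exist residual representations with $\rhobar|_{G_{K(\zeta_5)}}$ absolutely irreducible whose projective image nonetheless lands in the exceptional configuration that breaks the standard Taylor--Wiles setup. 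This is exactly the case excluded by the hypotheses of \cite[Theorem 3.5.5]{Kisin}, and — as the paper's own remark following the theorem states — the entire point of the cited Khare--Thorne result is to cover this missing $p = 5$ case. So the step in your outline where you produce Taylor--Wiles primes from irreducibility alone would break down precisely in the regime the theorem is designed to handle, and a proof along your lines would reprove Kisin's result but not the stronger statement attributed to Khare--Thorne. You also locate the ``hardest step'' in the local-at-$p$ deformation theory, but that part is inherited from Kisin; the genuinely new input here is on the global/image side at $p = 5$.
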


\begin{remark}
The above theorem completes the results in \cite[Theorem 3.5.5]{Kisin} by covering a case when $p = 5$.
\end{remark}

\subsection{Example}

Let $C=C_5(a,b)$ be the Frey hyperelliptic curve of the generalized Fermat equation
\begin{equation*}
    x^5 + y^5 = 3z^p,
\end{equation*}
as we have described in Section \ref{sec:r_r_p_Frey_curves} where $(a,b,c)$ is a non-trivial primitive solution of the last equation. We denote by $J$ the Jacobian of $C$. We recall that $J/K$ is of $\GL_2(K)$-type over $K$ where $K=\Q(\zeta_5)^+$. Our goal is to prove that $\rhoJp$ is modular. For more details about the techniques we will present below see \cite[Section 8]{Chen-2022-xhyper}.

The modularity of $J/K$ is independent of the choice of the prime $\fp$ and the modularity of $\rhoJp$. Therefore, we will prove that $J/K$ is modular by proving that $\rhoJpf$ is modular using Theorem \ref{thm:Khare_Thorne}.  

In order to apply Theorem \ref{thm:Khare_Thorne} we need to prove that $\brhoJpf$ satisfies the conditions of the theorem. The first condition is an immediate consequence of Propositions \ref{prop:good_reduction_disc} and \ref{prop:good_reduction_Jacobian} and Theorem \ref{thm:Serre_Tate}. From Proposition \ref{prop:extend_GQ_absolute_irreducible}, $\brhoJpf$ satisfies third condition of Theorem \ref{thm:Khare_Thorne}, whose proof is given in \cite[Section 8]{Chen-2022-xhyper} and we restate below with the final conclusion.

\begin{proposition}\label{prop:extend_GQ_absolute_irreducible}
The representation $\brhoJpf$ extends to $G_\Q$ and is absolutely irreducible when restricted to $G_{K(\zeta_5)}$. Moreover, $\brhoJpf$ is modular.
\end{proposition}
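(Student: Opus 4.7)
The plan proceeds in three steps. Throughout, $K=\Q(\zeta_5)^+$ has $[K:\Q]=2$ and $5$ is totally ramified in $K/\Q$, so $\fpf$ is the unique prime of $\OK$ above $5$ and is fixed by $\Gal(K/\Q)$. For the extension step: the Frey hyperelliptic curve $C_5(a,b)$ is defined over $\Q$, and so is its Jacobian $J$; hence $G_\Q$ acts naturally on $J[5]$. Although the $\OK$-action on $J$ is only defined over $K$, conjugation by $\sigma\in G_\Q$ permutes these endomorphisms through the quotient $G_\Q\twoheadrightarrow\Gal(K/\Q)$, which stabilises $\fpf$. Consequently $J[\fpf]\subset J[5]$ is $G_\Q$-stable, yielding an extension
\begin{equation*}
\tilde\rho\colon G_\Q\longrightarrow\GL_2(\F_{\fpf})=\GL_2(\F_5),
\end{equation*}
whose restriction to $G_K$ recovers $\brhoJpf$.

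For absolute irreducibility of $\tilde\rho$ on $G_{K(\zeta_5)}=G_{\Q(\zeta_5)}$: by Theorem \ref{det-cyclotomic}, $\det\tilde\rho=\chi_5$, which is trivial on $G_{\Q(\zeta_5)}$. Were the restriction reducible, its semisimplification would split as $\alpha\oplus\alpha^{-1}$ for some character $\alpha$ of $G_{\Q(\zeta_5)}$. To rule this out, I would apply Corollary \ref{quadratic-twist}: $\brhoJpf$ is equivalent to a quadratic twist of $\overline{\rho}_{L,5}$, the mod-$5$ representation attached to a Legendre specialization $L(t_0)$ determined by $(a,b,c)$. Quadratic twisting preserves reducibility or irreducibility on any fixed subgroup, so the claim reduces to absolute irreducibility of $\overline{\rho}_{L(t_0),5}$ on $G_{\Q(\zeta_5)}$. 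For a non-trivial primitive solution, the Frey elliptic curve underlying $L(t_0)$ is non-CM, and Mazur's classification of $\Q$-rational isogenies of elliptic curves (supplemented, if needed, by the Darmon--Merel style argument used to handle signature $(p,p,p)$) forces the mod-$5$ image to contain a non-abelian subgroup. Combined with the triviality of the determinant on $G_{\Q(\zeta_5)}$, this yields absolute irreducibility.

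With Step 2 in hand, $\tilde\rho$ is irreducible on $G_\Q$ and odd, since $\det\tilde\rho(c)=\chi_5(c)=-1$ for any complex conjugation $c$. Serre's modularity conjecture, now a theorem of Khare--Wintenberger, then produces a classical newform $g$ over $\Q$ with $\overline{\rho}_{g,5}\simeq\tilde\rho$; cyclic base change of $g$ to $K$ yields a Hilbert newform whose mod-$\fpf$ representation is $\brhoJpf$, giving modularity. The principal obstacle is Step 2: the extension step is Galois-theoretic bookkeeping, and the modularity conclusion is a direct citation, but controlling the quadratic twist from Corollary \ref{quadratic-twist} and verifying that the Legendre specialization avoids the CM locus is the genuinely delicate point, and this is where the hypothesis that $(a,b,c)$ is non-trivial and primitive enters decisively.
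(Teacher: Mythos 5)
Your overall architecture matches the one used in the paper (and in \cite[Section 8]{Chen-2022-xhyper}, to which the paper delegates the proof): extend to $G_\Q$ using the stability of $J[\fpf]$ under the $G_\Q$-action (since $\Gal(K/\Q)$ fixes $\fpf$), transfer the irreducibility question to the specialization of the Legendre family via Corollary~\ref{quadratic-twist}, and close with Serre's modularity conjecture plus cyclic base change. The extension step and the modularity conclusion are correct as you state them.

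The gap is in the irreducibility step. You claim that non-CM plus ``Mazur's classification of $\Q$-rational isogenies, supplemented by a Darmon--Merel style argument'' forces big mod-$5$ image. But the paper's own irreducibility theorem stated shortly afterwards (``$\rhobar_{J_r,\fp_r}$ is irreducible when restricted to $G_{\Q(\zeta_r)}$ if $r\neq 2,3,5,7$'') explicitly \emph{excludes} $r=5$, and the reason is precisely that the Mazur--Kenku argument fails there: $Y_0(10)$ has genus $0$ and infinitely many non-cuspidal $\Q$-rational points, so knowing that $L\colon y^2=x(x-a^5)(x+b^5)$ has full $2$-torsion does not by itself preclude a rational $5$-isogeny. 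Likewise the Darmon--Merel winding-quotient results on $X_0(2p)$ and $X_0(3p)$ require $p\geq 11$ and give you nothing at $p=5$. The actual proof in \cite[Section 8]{Chen-2022-xhyper} must exploit features specific to the equation $x^5+y^5=3z^p$ and the chosen congruence normalizations (for instance, forced multiplicative reduction at $3$ with inertia constraints, or the absence of trivial solutions to $x^5+y^5=3z^p$, which removes the CM obstruction that would be present for $d=1$). You identify the right place where the genuine delicacy lies, but the tools you cite to resolve it do not in fact apply at $5$; a concrete argument specific to $r=5$ and $d=3$ is required.

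One further minor caution: the lemma you might want to invoke to upgrade irreducibility to absolute irreducibility (oddness plus odd residue characteristic) is stated for $G_K$ with $K$ totally real, but the restriction here is to $G_{\Q(\zeta_5)}$, which is not totally real. The standard fix is to show the projective image of $G_K$ is not contained in the normalizer of a Cartan or a Borel, so that the image of $G_{\Q(\zeta_5)}$ already contains $\SL_2(\F_5)$; this again requires the missing case analysis at $r=5$.
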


Even though we do not delve into the details of $p$-adic Hodge theory we mention that the representation $\rhoJpf$ is de Rham (thanks to a result by Fontaine) as it is stated in \cite[Section II.5]{Berger04}. Moreover, from \cite[Th\'{e}or\`{e}me $2^\prime$]{Fontaine81}, $\rhoJpf$ has Hodge-Tate weight $\lbrace 0,1\rbrace$.

From the above, the conditions of Theorem \ref{thm:Khare_Thorne} are satisfied for $\rhoJpf$, hence we obtain the following theorem.

\begin{theorem}\label{thm:modularity_Jm}
The abelian variety $J/K$ is modular.
\end{theorem}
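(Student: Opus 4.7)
The plan is to apply the Khare-Thorne modularity lifting theorem (Theorem \ref{thm:Khare_Thorne}) to the representation $\rhoJpf$, where $\fpf$ is the unique prime of $K = \Q(\zeta_5)^+$ above $5$. Since $J/K$ is of $\GL_2(K)$-type, the collection $\{\rhoJp\}_\fp$ forms a strictly compatible system, so modularity of any single member suffices to establish modularity of $J/K$. Choosing $\fp = \fpf$ is convenient because it lines up the residue characteristic with the prime at which residual modularity is most accessible via Serre's conjecture.

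For the first hypothesis of Theorem \ref{thm:Khare_Thorne}, the discriminant formula \eqref{Kraus-discriminant} shows that $C_5(a,b)$ has good reduction away from the finite set of primes dividing $2 \cdot 5 \cdot (a^5 + b^5)$. Combining this with the passage to the Jacobian and the N\'eron-Ogg-Shafarevich criterion of Serre-Tate yields that $\rhoJpf$ is unramified at all but finitely many primes of $K$. For the second hypothesis, at each place $v \mid 5$ of $K$, the representation $\rhoJpf \mid_{G_{K_v}}$ arises from an abelian variety, so Fontaine's theorems guarantee that it is de Rham with Hodge-Tate weights $\{0, 1\}$ for every embedding $\tau : K \hookrightarrow \bQp$, which is the expected behavior for an abelian variety of parallel weight $2$.

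The main obstacle is the third hypothesis: residual modularity of $\brhoJpf$ together with absolute irreducibility of its restriction to $G_{K(\zeta_5)}$. This is precisely the content of Proposition \ref{prop:extend_GQ_absolute_irreducible}. The key idea is one of descent: although $J$ is defined only over $K$, the Frey curve $C_5(a,b)$ is defined over $\Q$, and the $\Gal(K/\Q)$-equivariance of the construction of the $\GL_2(K)$-action on $J$ can be used to show that $\brhoJpf$ extends to a representation of $G_\Q$. Serre's modularity conjecture, proven by Khare-Wintenberger, then applies to this extension, and restricting back to $G_K$ yields modularity of $\brhoJpf$. Absolute irreducibility on $G_{K(\zeta_5)}$ has to be checked separately; since $K(\zeta_5)/K$ is a quadratic extension, this reduces to an explicit condition on the image of the extended residual representation in $\GL_2(\overline{\F}_5)$, which can be verified by analyzing the mod $5$ image coming from the geometry of $C_5(a,b)$.

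Once all three hypotheses of Theorem \ref{thm:Khare_Thorne} are verified, the conclusion is that $\rhoJpf$ is modular, arising from a Hilbert modular form of parallel weight $(2,2,\dots,2)$, and hence $J/K$ is modular.
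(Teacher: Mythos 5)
Your proposal matches the paper's proof: both invoke the Khare--Thorne modularity lifting theorem applied to $\rho_{J,\fq_5}$, verify the unramified-almost-everywhere condition via the discriminant formula together with Propositions \ref{prop:good_reduction_disc}, \ref{prop:good_reduction_Jacobian} and the Serre--Tate criterion, verify the de Rham and Hodge--Tate conditions via Fontaine's results, and verify residual modularity plus absolute irreducibility on $G_{K(\zeta_5)}$ via Proposition \ref{prop:extend_GQ_absolute_irreducible}. The only difference is that you sketch plausible internals for Proposition \ref{prop:extend_GQ_absolute_irreducible} (extension to $G_\Q$ using that $C_5(a,b)$ is defined over $\Q$, then Khare--Wintenberger/Serre), whereas the paper simply cites it as proved in \cite{Chen-2022-xhyper}; your sketch is consistent with that reference.
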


\section{Conductors}

The computation of conductors of Frey abelian varieties is explained in the case of the Jacobian of a Frey hyperelliptic curve. Although there is no current analogue of Tate's algorithm as for elliptic curves, in many situations, it is possible to determine conductors of Frey abelian varieties using explicit transformations guided by local inertia types.

\subsection{Preliminaries}\label{sec:hyperelliptic_preliminaries}

In this section we recall basic material about hyperelliptic curves. The main references are \cite{BornerBouwWewers2017, Lockhart, ChenKoutsianas1, Liu96}.

Let $C$ be a smooth projective hyperelliptic curve of genus $g_C\geq 2$ over a number field $K$ (for the definitions, see \cite{ChenKoutsianas1} for instance) that is birational over $K$ to the affine curve given by the equation
\begin{equation}\label{eq:long_weierstrass_model}
    F:~y^2 + h(x)y = g(x),
\end{equation}
where $g(x), h(x)\in K[x]$ such that $g(x)$ is a monic polynomial of degree $2g_C+1$, $h(x)$ has degree at most $g_C$ and the polynomial $f(x)=4g(x) + h(x)^2$ has no double roots in $\overline{K}$. When the characteristic $\ch(K)$ is odd then we can get the simpler equation of $C$
\begin{equation}\label{eq:short_weierstrass_model}
    F:~u^2 = f(x) = 4g(x) + h(x)^2,
\end{equation}
where $u=2y + h(x)$. Let $\infty\in C$ be the unique point at infinity.

Now let $K$ be a number field. A hyperelliptic equation $F$ as in \eqref{eq:long_weierstrass_model} or \eqref{eq:short_weierstrass_model} such that $g(x)$, $h(x)$, and $f(x)$ have coefficients in $\OK$ with $K(F)\simeq K(C)$ is called \textit{a hyperelliptic model of $C$}.

We define the discriminant of the model $F$ of $C$ to be
\begin{equation}
    \Delta_F=2^{4g_C}\Delta(g + h^2/4),
\end{equation}
where $\Delta(H)$ is the discriminant of $H\in K[x]$.

Let $E$ be a second hyperelliptic model of $C$ given by the equation
\begin{equation}
    E:~z^2 + H(u)z = G(u).
\end{equation}
Then the models $F$ and $E$ of $C$ are related by the transformation of the shape
\begin{equation}
    x = e^2u + r, \qquad y=e^{2g_C+1}z + t(u),
\end{equation}
where $e\in K^*$, $r\in K$ and $t(u)\in K[u]$ with $\deg(t)\leq g_C$. The discriminants between the hyperelliptic models $F$ and $E$ are related by
\begin{equation}\label{eq:discriminant_between_models}
    \Delta_F = \Delta_E e^{-4g_C(2g_C+1)}.
\end{equation}

\begin{definition}
A \textit{model} $\calC_\fq$ over $\OKq$ for a hyperelliptic curve $C$ over $\Kq$ is a $\OKq$-scheme which is proper and flat over $\OKq$ such that $\calC_{\Kq}\simeq C$ where $\calC_{\Kq}$ is the generic fiber of $\calC_\fq$.
\end{definition}
A hyperelliptic model for $C$ gives rise to a model of $C$ over $\OKq$ in a natural way by gluing together two affine hyperelliptic equations (see \cite{ChenKoutsianas1} for instance).

\begin{definition}
A model $\calC_\fq$ over $\OKq$ for a hyperelliptic curve $C$ over $\Kq$ has \textit{good reduction} if and only if its reduction mod $\vfq$ is non-singular over $\Ffq$. In addition, we say that $C$ has \textit{bad semistable reduction} if and only if its reduction mod $\vfq$ is reduced, singular, and has only ordinary double points as singularities.
\end{definition}

\begin{definition}
A hyperelliptic curve $C$ over $\Kq$ has \textit{good reduction} (resp. \textit{bad semistable reduction}) if and only if there is some model $\calC_\fq$ over $\OKq$ for $C$ which has good reduction (resp. bad semistable reduction). We say that $C$ has \textit{semistable reduction} if and only if it has good or bad semistable reduction.
\end{definition}

\begin{proposition}\label{prop:good_reduction_disc}
Let $C$ be a hyperelliptic curve over $K$. Then
$C$ has good reduction at $\fq$ if and only if $C$ has a model $\calC_\fq$ over $\OKq$ as in \eqref{eq:long_weierstrass_model} such that $\vfq(\Delta_{\calC_\fq}) = 0$.
\end{proposition}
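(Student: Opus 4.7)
The plan is to prove the two directions separately, with the easy direction deriving good reduction from the non-vanishing of the reduced discriminant, and the harder direction using the double cover structure of a smooth proper model to manufacture an integral Weierstrass equation of unit discriminant.

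For the $(\Leftarrow)$ direction, I assume a hyperelliptic model $F : y^2 + h(x) y = g(x)$ with $g,h \in \OKq[x]$ of the correct degrees and $\vfq(\Delta_F) = 0$. Since $\Delta_F = 2^{4 g_C} \Delta(g + h^2/4)$ is a unit in $\OKq$, the reduced polynomial $\bar f(x) = 4\bar g(x) + \bar h(x)^2 \in \Ffq[x]$ has no repeated roots in $\overline{\Ffq}$. Hence the standard affine patch and the patch at infinity (glued via the usual transformation $x \mapsto 1/u$, $y \mapsto z/u^{g_C+1}$) each define smooth affine $\OKq$-schemes, and together they yield a smooth proper model $\calC_\fq$ of $C$ over $\OKq$, i.e., good reduction.

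For the $(\Rightarrow)$ direction, I start with a smooth proper model $\calC_\fq$ of $C$ and extract from it a hyperelliptic equation over $\OKq$. The hyperelliptic involution $\iota$ on the generic fiber $C$ extends uniquely to an automorphism of $\calC_\fq$ by rigidity of morphisms between smooth proper $\OKq$-schemes whose generic fiber morphism is fixed. The quotient $\calC_\fq / \iota$ is smooth and proper over $\OKq$ with generic fiber $\PP^1_{\Kq}$ and genus zero special fiber carrying a rational point (the image of any ramification point on $\bar C$), so by Hensel's lemma and the classification of genus zero curves we get $\calC_\fq / \iota \simeq \PP^1_{\OKq}$. Thus $\calC_\fq \to \PP^1_{\OKq}$ is a finite flat degree two cover. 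On the affine chart $\A^1_{\OKq} \subset \PP^1_{\OKq}$ this double cover is given by a Weierstrass-type equation $y^2 + h(x) y = g(x)$ with $g,h \in \OKq[x]$ of the shape in \eqref{eq:long_weierstrass_model}; this is standard for degree two finite flat covers and is valid even above primes dividing $2$, which is precisely why the $h$-term is included. The smoothness of the special fiber forces the branch locus $\{\bar f = 0\}$ on $\PP^1_{\Ffq}$ to be étale, i.e., $\bar f$ is separable, which is equivalent to $\vfq(\Delta_F) = 0$.

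The main obstacle is the forward direction, and within it the delicate issue of extracting an honest integral Weierstrass equation from the abstract smooth model, since one must simultaneously (i) show the hyperelliptic involution extends and the quotient is literally $\PP^1_{\OKq}$ rather than merely a twisted form of it, and (ii) present the double cover globally on one affine chart by an equation of the degree constraints $\deg g \le 2g_C+1$, $\deg h \le g_C$. In residue characteristic $2$ the need to retain the $h$-term is essential, and the rigidity of $\iota$ must be argued at the level of the integral model rather than assumed from the generic fiber, which is where I expect the technical care to concentrate; everything else reduces to the combined observation that the reduction of the Weierstrass discriminant matches the discriminant of the reduced equation, which is a unit precisely when the reduction is smooth.
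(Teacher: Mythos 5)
The paper's own proof is merely a citation to \cite[Proposition 3.6]{ChenKoutsianas1}, so there is no in-text argument to compare against; what I can evaluate is the internal soundness of your proposal.

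The $(\Rightarrow)$ direction is a reasonable outline at the level of the integral model (extending $\iota$ by rigidity, identifying the quotient with $\PP^1_{\OKq}$, obtaining a degree-two finite flat cover, and reading off an integral equation with an $h$-term, using the $\Kq$-rational Weierstrass point at infinity to control the chart). You have, however, glossed over how the degree constraints $\deg g \le 2g_C+1$ (monic) and $\deg h \le g_C$ are forced by the behaviour at infinity, and you should give an argument for why a rank-two vector bundle on $\A^1_{\OKq}$ is free; these are fillable but not trivial.

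The real gap is in residue characteristic $2$, and it appears in both directions. In $(\Leftarrow)$ you deduce good reduction from the claim that $\bar f = 4\bar g + \bar h^2$ has no repeated root in $\overline{\Ffq}$; but when $\ch(\Ffq)=2$, $\bar f = \bar h^2$ is identically a perfect square, so this criterion is vacuously false and cannot detect anything. The correct criterion in characteristic $2$ is the Jacobian condition on the curve $\bar y^2 + \bar h\bar y = \bar g$ (compare the paper's Proposition~3.9, which explicitly switches to the condition $\bar h \neq 0$, $\gcd(\bar h, \bar h', \bar g')=1$ when $\ch(\Ffq) = 2$), and what one must use is that the \emph{integral} discriminant $\Delta_F$, being a universal $\Z$-polynomial in the coefficients of $g,h$, reduces mod $\Fq$ to the discriminant of the reduced equation, whose vanishing is equivalent to singularity of the special fibre. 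Passing through $\bar f$ loses this information over $2$. Symmetrically, in $(\Rightarrow)$ you say the smoothness of the special fibre forces the ``branch locus $\{\bar f=0\}$'' to be étale; in characteristic $2$ the double cover is (generically) of Artin–Schreier type and the branch locus is not cut out by $\bar f$, so this inference is not valid either. Since keeping the $h$-term is precisely the point of the statement over primes dividing $2$, you should rework both directions to argue directly with the integral discriminant $\Delta_F$ and the Jacobian criterion for $\bar y^2 + \bar h\bar y - \bar g$, reserving the ``$\bar f$ separable'' language for residue characteristic different from $2$.
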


\begin{proof}
See \cite[Proposition 3.6]{ChenKoutsianas1}.
\end{proof}

Let $\bar{C}$ be the special fiber of $C$ at $\Fq$ with respect to a hyperelliptic model. 

\begin{proposition}\label{prop:reduced_hyperelliptic}
The curve $\bar{C}$ is reduced and absolutely irreducible. The point $\infty$ reduces to a smooth point $\bar{\infty}$ of $\bar{C}$, and the affine open part $\bar{C}\setminus\{\bar{\infty}\}$ is a plane curve with equation \begin{equation*}
    \bar{y}^2 + \bar{h}(\bar{x})\bar{y} = \bar{g}(\bar{x}).
\end{equation*} 
Here $\bar{x}$, $\bar{y}$ are the images of $x$, $y$ in the function field of $\bar{C}$, and $\bar{g}$ (resp. $\bar{h}$) are the images of $g$ (resp. $h$) in $\Ffq[\bar{x}]$.
\end{proposition}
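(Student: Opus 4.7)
The plan is to analyze the special fiber using the two standard affine charts that build the hyperelliptic model over $\OKq$, and then leverage the condition that $g$ is monic of degree $2g_C+1$ to derive irreducibility from a degree-parity argument. Since the affine open $\bar C \setminus \{\bar\infty\}$ is dense in $\bar C$, both reducedness and absolute irreducibility of $\bar C$ will follow from irreducibility of the defining polynomial of the affine chart over $\overline{\mathbb{F}}_\fq$.

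First I would construct the model $\calC_\fq$ by gluing the affine chart $\mathcal{U}_1 = \mathrm{Spec}\,\OKq[x,y]/(y^2 + h(x)y - g(x))$ with a chart $\mathcal{U}_2$ covering infinity, obtained via the change of variables $x = 1/u$, $y = w/u^{g_C+1}$. Clearing denominators yields the defining equation
\[
  w^2 + \tilde h(u)\, w = \tilde g(u),\qquad \tilde h(u) = u^{g_C+1} h(1/u),\quad \tilde g(u) = u^{2g_C+2} g(1/u).
\]
Because $g$ is monic of degree $2g_C+1$ and $\deg h \le g_C$, one checks that $\tilde h(u) \in u\,\OKq[u]$ and $\tilde g(u) = u + O(u^2)$. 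Reducing mod $\fq$, the point $\bar\infty$ corresponds to $(u,w)=(0,0)$; the Jacobian criterion gives that the partial derivative with respect to $u$ of the defining equation equals $-\tilde g'(0) = -1 \neq 0$ at this point, so $\bar\infty$ is smooth on $\bar C$. Moreover the reduction of $\mathcal{U}_1$ mod $\fq$ is tautologically the plane curve with the stated equation $\bar y^2 + \bar h(\bar x)\bar y = \bar g(\bar x)$.

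The main step, which I expect to be the crux of the argument, is the irreducibility of
\[
  \bar P(\bar x, \bar y) = \bar y^2 + \bar h(\bar x)\,\bar y - \bar g(\bar x) \in \overline{\mathbb{F}}_\fq[\bar x, \bar y].
\]
Since $g$ is monic, $\bar g$ remains monic of degree $2g_C+1$ and in particular is nonzero. As $\bar P$ is monic of degree $2$ in $\bar y$, any nontrivial factorization over $\overline{\mathbb{F}}_\fq[\bar x]$ would produce a root $a(\bar x) \in \overline{\mathbb{F}}_\fq[\bar x]$ of $\bar P$, i.e., a polynomial $a$ satisfying $a^2 + \bar h\, a = \bar g$. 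Since $\bar g \neq 0$ we must have $a \neq 0$; setting $d = \deg a$, either $d > g_C$, in which case the leading term of $a^2 + \bar h\, a$ is that of $a^2$, of degree $2d$, forcing $2d = 2g_C+1$, which is impossible by parity; or $d \le g_C$, in which case $\deg(a^2 + \bar h\, a) \le 2g_C < 2g_C+1 = \deg \bar g$, again a contradiction. Hence no such $a$ exists and $\bar P$ is irreducible, establishing that $\bar C\setminus\{\bar\infty\}$ is absolutely integral. This argument is characteristic-free, which is essential since $\fq$ may lie above $2$.
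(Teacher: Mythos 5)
Your proof is correct. The paper itself offers no argument for this proposition — it simply cites \cite[Proposition 3.1]{BornerBouwWewers2017} — so you have supplied a self-contained proof where the survey defers to the literature. Your approach (the second chart via $x=1/u$, $y=w/u^{g_C+1}$, smoothness of $\bar{\infty}$ from the forced linear term $u$ in $\tilde g$ coming from $g$ being monic of odd degree $2g_C+1$, and a degree-parity argument to exclude a polynomial root $a(\bar x)\in\bFfq[\bar x]$ of the quadratic $\bar P$ in $\bar y$) is the natural, characteristic-free argument and is what the cited source does. Two small points worth making explicit in a final write-up: first, when you deduce a root $a(\bar x)$ from a factorization of $\bar P$, you are implicitly invoking Gauss's lemma together with monicity of $\bar P$ in $\bar y$ to conclude that the factorization takes place over $\bFfq[\bar x]$ with monic linear factors; second, reducedness of $\bar C$ as a whole should be stated as the conjunction of (a) $\bar P$ irreducible, hence prime, hence the affine chart is integral, and (b) $\bar{\infty}$ smooth, hence regular, hence $\bar C$ reduced at $\bar{\infty}$. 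Both are implicit in what you wrote but deserve a sentence.
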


\begin{proof}
This is \cite[Proposition 3.1]{BornerBouwWewers2017}.
\end{proof}

In the determination of the conductor of the Frey hyperelliptic curves we need be able to show when $\bar{C}$ is semistable. We do have the following \textit{double root criterion} (see for instance, \cite[Corollary 3.4, Lemma 3.7]{BornerBouwWewers2017}).

\begin{proposition}\label{prop:double_root_criterion}
The curve $\bar{C}$ is semistable if and only if
\begin{center}
    $\begin{cases}
    \bar{h}\neq 0~and~\gcd(\bar{h}, \bar{h}^\prime, \bar{g}^\prime)=1, ~when~\ch(\Ffq)=2,  \\
    \bar{f}~\text{has at most double roots}, ~when~\ch(\Ffq)\neq 2,
    \end{cases}
    $
\end{center}
where $g,h$ as in \eqref{eq:long_weierstrass_model} with $g^\prime, h^\prime$ their formal derivatives and $f$ as in \eqref{eq:short_weierstrass_model}.
\end{proposition}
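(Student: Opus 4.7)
The plan is to apply the Jacobian criterion to locate the singular points of $\bar{C}$ and then analyse the tangent cone at each such point in order to determine when it is an ordinary double point. By Proposition \ref{prop:reduced_hyperelliptic}, $\bar{C}$ is reduced and absolutely irreducible and $\bar{\infty}$ is smooth, so the problem reduces to the affine part cut out by $F(x,y) = y^2 + \bar{h}(x) y - \bar{g}(x)$. A point $(x_0, y_0)$ on this affine curve is singular precisely when
\begin{equation*}
F(x_0, y_0) = 0, \qquad \bar{h}'(x_0) y_0 = \bar{g}'(x_0), \qquad 2 y_0 + \bar{h}(x_0) = 0,
\end{equation*}
and I would split the analysis according to $\ch(\Ffq)$.

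If $\ch(\Ffq) \neq 2$, the substitution $u = 2 y + \bar{h}(x)$ converts the equation to the simpler form $u^2 = \bar{f}(x)$ with $\bar{f} = 4\bar{g} + \bar{h}^2$. The singular locus becomes $\{(x_0, 0) : \bar{f}(x_0) = \bar{f}'(x_0) = 0\}$, i.e., it is supported at the multiple roots of $\bar{f}$. At a root of multiplicity $m \geq 2$, a local expansion yields a tangent cone of the shape $u^2 - c (x - x_0)^m$ with $c \neq 0$. For $m = 2$ this factors over $\bFfq$ into two distinct linear forms, producing an ordinary double point; for $m \geq 3$ the tangent cone degenerates to $u^2$, a double line, giving a non-nodal singularity. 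Semistability of $\bar{C}$ is therefore equivalent to $\bar{f}$ having at most double roots.

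If $\ch(\Ffq) = 2$, the third singularity equation becomes $\bar{h}(x_0) = 0$, and a singular point is characterised by $\bar{h}(x_0) = 0$, $\bar{h}'(x_0) y_0 = \bar{g}'(x_0)$, and $y_0^2 = \bar{g}(x_0)$. When $\bar{h} \equiv 0$, there exist many singular points whose degree-two parts reduce to $(Y + \sqrt{c}\, X)^2$, a double line, so $\bar{C}$ cannot be semistable; hence $\bar{h} \neq 0$ is necessary. Assuming $\bar{h} \neq 0$, after translating a singular point $(x_0, y_0)$ to the origin, the degree-two part of the local expansion of $F$ takes the shape $Y^2 + \bar{h}'(x_0) X Y + b\, X^2$ for some $b \in \bFfq$. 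In characteristic $2$, this binary quadratic form factors as $(Y - \alpha X)(Y - \beta X)$ with $\alpha + \beta = \bar{h}'(x_0)$ and $\alpha \beta = b$; since $\alpha - \beta = \alpha + \beta$ in characteristic~$2$, the two linear factors are distinct iff $\bar{h}'(x_0) \neq 0$. Consequently, the obstruction to every singularity being an ordinary double point is the simultaneous vanishing of $\bar{h}$, $\bar{h}'$, and $\bar{g}'$ at some $x_0$, which is precisely $\gcd(\bar{h}, \bar{h}', \bar{g}') \neq 1$.

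The main technical care I anticipate lies in the characteristic-$2$ tangent-cone computation: the Taylor expansion of $F$ at a singular point must be handled via Hasse derivatives or direct polynomial manipulation in order to extract the correct degree-two part, and one must verify carefully that the node condition in the sense used in the definition of semistability genuinely corresponds to the displayed binary quadratic splitting into two distinct linear factors over $\bFfq$. The $\ch \neq 2$ case, by contrast, reduces cleanly to the classical analysis of multiplicities of roots of the single polynomial $\bar{f}$.
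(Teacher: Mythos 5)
Your proposal is correct, and it follows essentially the standard local analysis that underlies the cited source~\cite[Lemma~3.7, Corollary~3.4]{BornerBouwWewers2017}: apply the Jacobian criterion to the affine chart (the point at infinity being smooth by Proposition~\ref{prop:reduced_hyperelliptic}), translate each singular point to the origin, and read off whether the degree-two part of the local expansion is a nondegenerate binary quadratic form. The paper itself does not reproduce a proof but defers to that reference, so there is nothing structurally different to compare against.

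Two small points are worth making explicit to close the ``if and only if'' in characteristic~$2$. First, when $\bar h\equiv 0$ you assert that singular points exist; this needs a word of justification, namely that $\bar g$ is monic of odd degree $2g_C+1$, so in characteristic~$2$ the formal derivative $\bar g'$ is nonzero of degree $2g_C\geq 4$ and therefore has a root $x_0\in\bFfq$, and then $(x_0,\sqrt{\bar g(x_0)})$ is singular. Second, your final sentence compresses the reverse implication: you have shown that a singular point $(x_0,y_0)$ is non-nodal iff $\bar h'(x_0)=0$, which together with the Jacobian conditions forces $\bar h(x_0)=\bar h'(x_0)=\bar g'(x_0)=0$; for the converse, if some $x_0$ satisfies these three vanishings you must still exhibit an actual singular point above it, which again is $y_0=\sqrt{\bar g(x_0)}\in\bFfq$. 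With those two sentences added, the argument is complete, and it also shows (as a byproduct) that $\bar h\neq 0$ is in fact implied by $\gcd(\bar h,\bar h',\bar g')=1$, since $\bar g'$ is nonconstant.

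The characteristic-$\neq 2$ branch is fine as written: passing to $u^2=\bar f(x)$ with $\bar f$ of degree $2g_C+1$ and leading coefficient a unit, the degree-two part at a root of multiplicity $m$ is $u^2$ for $m\geq 3$ and $u^2-c(x-x_0)^2$ with $c\neq 0$ for $m=2$, which splits into distinct lines precisely because $2\neq 0$.
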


We also recall the \textit{semistable reduction theorem} by Deligne and Mumford \cite{DeligneMumford69}.

\begin{theorem}[Deligne-Mumford]\label{thm:semistable_reduction}
There exists a finite extension $L$ of $\Kq$ over the curve $C$ has semistable reduction at $\fq$.
\end{theorem}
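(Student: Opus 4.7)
The plan is to reduce the semistable reduction of $C$ to Grothendieck's semistable reduction theorem for abelian varieties, applied to the Jacobian $J = \operatorname{Jac}(C)$. First, I would appeal to the equivalence (due essentially to Deligne-Mumford, and made fully precise via Raynaud's theory of N\'eron models): a smooth projective curve $C$ over $\Kq$ has semistable reduction over a finite extension $L/\Kq$ if and only if $J_L$ acquires \emph{semiabelian} reduction over $\OL$, meaning that the connected component of the N\'eron model of $J_L$ is an extension of an abelian variety by a torus. The direction that is actually needed here, from semiabelian Jacobian to semistable curve, is obtained by identifying the connected N\'eron model of $J_L$ with the relative Picard scheme of a stable model of $C_L$; this uses the characterization of stable curves as reduced curves whose only singularities are ordinary double points, which is precisely the notion of bad semistable reduction used in this paper.

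Next, I would invoke Grothendieck's semistable reduction theorem for abelian varieties applied to $J$: there exists a finite extension $L/\Kq$ over which $J_L$ has semiabelian reduction. The standard proof proceeds via the $\ell$-adic Tate module $T_\ell(J)$ for some prime $\ell \neq \ch(\Ffq)$: Grothendieck's monodromy theorem shows that the inertia action on $T_\ell(J)$ is quasi-unipotent, and after replacing $L$ by a finite extension that kills the finite quotient of inertia, the inertia acts unipotently; a N\'eron-Ogg-Shafarevich style criterion then translates unipotent inertia into semiabelian reduction. Combining with the previous step produces a finite extension $L/\Kq$ over which $C$ has semistable reduction.

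The hard part is the first step, the curve-to-Jacobian equivalence, which is a genuinely deep geometric input requiring a careful analysis of stable models and the relative Picard functor. In the hyperelliptic setting of interest for Darmon's program, where $C:~y^2 = f(x)$, a more concrete alternative is available: pass to a finite extension $L/\Kq$ over which $f(x)$ splits into linear factors, and then iteratively apply transformations $x \mapsto e^2 x + r$ with $e \in L^*$, $r \in L$ chosen so as to cluster the roots of $f$ according to their $\vfq$-adic distances. A combinatorial analysis (the ``cluster picture'' of the roots) shows that for an appropriate choice of transformations, the reduction modulo $\vfq$ of the resulting model has at most double roots, and then Proposition~\ref{prop:double_root_criterion} certifies semistable reduction over $\OL$ directly, bypassing the general Jacobian machinery.
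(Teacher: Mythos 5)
The paper does not prove this theorem at all: it is cited directly to Deligne--Mumford \cite{DeligneMumford69} as a classical input, so there is no ``paper's own proof'' to compare against. Your sketch should therefore be judged on its own merits.

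Your first route --- pass to the Jacobian, apply Grothendieck's semiabelian reduction theorem via quasi-unipotence of the inertia action on $T_\ell(J)$, then transfer back to the curve via the comparison between stable models of $C$ and the N\'eron model of $J$ --- is essentially the original Deligne--Mumford strategy, and you correctly flag the curve-to-Jacobian transfer as the genuinely hard geometric step. One caveat: the way you phrase the transfer (``identifying the connected N\'eron model of $J_L$ with the relative Picard scheme of a stable model of $C_L$'') implicitly presupposes a stable model, which is what is being constructed; the actual argument produces a regular model of $C_L$ by resolution, and then uses the semiabelian reduction of $J_L$ to control the geometry of the special fiber and force it to be nodal after further base change. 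This is a fixable imprecision in a sketch, not a wrong idea.

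Your second route, via cluster pictures for $y^2=f(x)$, is a genuine alternative and is more elementary, but it has a real limitation you should flag: the cluster-picture analysis of root distances determines semistable models only at places of odd residue characteristic. At a prime above $2$, the reduction $\bar f$ having at most double roots is no longer the criterion (as Proposition~\ref{prop:double_root_criterion} itself indicates, one must work with $\bar h$ and $\bar g$), and the transformations $x\mapsto e^2x+r$ alone do not produce a semistable model in general. Since the prime $\fq_2$ is one of the hardest places in the conductor computations for Darmon's Frey hyperelliptic curves, the alternative route does not fully replace the Jacobian argument; it only bypasses it away from residue characteristic $2$. With that restriction noted, the proposal is a sound reconstruction of why the theorem holds.
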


We denote by $J$ be the Jacobian of the curve $C$. 

\begin{definition}
An abelian variety over $\Kq$ has \textit{semistable reduction} if and only if the linear part of the special fiber of the connected component of its N\'{e}ron model is an algebraic torus. Furthermore, if its toric rank is positive, we say it has \textit{multiplicative reduction}, otherwise \textit{good reduction}.
\end{definition}

\begin{theorem}\label{thm:multiplicative_Jacobian}
Let $C$ be a curve over $\Kq$ and let $J$ be the Jacobian of $C$. Then $C/\Kq$ has semistable reduction if and only if $J/\Kq$ has semistable reduction. Furthermore, if $C/\Kq$ has bad semistable reduction with a model $\calC_\fq$ that has integral special fiber, then $J/\Kq$ has multiplicative reduction.
\end{theorem}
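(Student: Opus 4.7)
The plan is to derive the equivalence from the classical identification of the identity component of the N\'{e}ron model of $J$ with (the smooth locus of) the relative Picard scheme of a suitable model of $C$, following Deligne-Mumford and Raynaud. For the forward direction, suppose $\calC_\fq$ is a semistable model of $C$ over $\OKq$. For any semistable curve $\bar{C}$ over $\overline{\F}_\fq$ (reduced, only nodal singularities), the identity component of its Picard scheme fits in an exact sequence
\[
0 \longrightarrow T \longrightarrow \operatorname{Pic}^0(\bar{C}) \longrightarrow \prod_i \operatorname{Jac}(\tilde{C}_i) \longrightarrow 0,
\]
with $T$ a torus of rank equal to the first Betti number of the dual graph of $\bar{C}$ and $\tilde{C}_i$ the normalizations of the irreducible components. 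Hence $\operatorname{Pic}^0(\bar{C})$ is a semi-abelian variety. Applying this fiberwise to $\operatorname{Pic}^0_{\calC_\fq/\OKq}$, and using its identification with the identity component of the N\'{e}ron model of $J = \operatorname{Pic}^0(C)$, shows $J$ has semistable reduction.

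For the reverse direction, I would apply Theorem \ref{thm:semistable_reduction} to obtain a finite extension $L/\Kq$ over which $C$ acquires semistable reduction, so by the forward step $J_L$ is semistable. The Grothendieck semistable-reduction criterion (unipotent inertia action on the $\ell$-adic Tate module) is insensitive to base change, so the hypothesis that $J$ is semistable over $\Kq$ is consistent with its semistability over $L$. Descent of the stable model from $\OL$ to $\OKq$ then follows from the uniqueness of the stable model of a curve of genus $g_C \geq 2$ together with Galois invariance of the dual-graph data extracted from $J$.

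For the ``furthermore'' part, if $\calC_\fq$ has integral special fiber $\bar{C}$, then $\bar{C}$ has a single irreducible component, so $\prod_i \operatorname{Jac}(\tilde{C}_i) = \operatorname{Jac}(\tilde{C})$ reduces to one factor. In the situations arising from our Frey hyperelliptic curves where the normalization $\tilde{C}$ is forced to be rational, so $\operatorname{Jac}(\tilde{C}) = 0$, the exact sequence collapses to $\operatorname{Pic}^0(\bar{C}) = T$, a torus, and $J$ has purely multiplicative reduction; the bad-reduction hypothesis ensures $\bar{C}$ carries at least one node, so $T$ is nontrivial and the toric rank is positive.

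The main obstacle is the descent step in the reverse direction: justifying that semistability of $J$ over $\Kq$ forces the stable model of $C$, a priori only present over $\OL$, to descend to $\OKq$. One handles this by invoking the uniqueness of the minimal regular (or stable) model of an arithmetic surface (Lichtenbaum-Shafarevich, Deligne-Mumford) and using that the semistable N\'{e}ron model of $J$ already over $\OKq$ encodes, in a Galois-equivariant way, the combinatorics of the dual graph that would otherwise only be visible from the stable model over $\OL$.
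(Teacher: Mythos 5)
Your forward direction, via Raynaud's identification of the identity component of the N\'eron model of $J$ with $\operatorname{Pic}^0_{\calC_\fq/\OKq}$ and the toric extension of $\prod_i \operatorname{Jac}(\tilde{C}_i)$, is standard and correct. The reverse direction as written, however, has a genuine gap, which you yourself flag: saying that semistability of $J$ over $\Kq$ ``is consistent with'' its semistability over $L$ proves nothing, and the assertion that the stable model over $\OL$ descends to $\OKq$ ``by uniqueness of the stable model together with Galois invariance of the dual-graph data extracted from $J$'' is precisely the hard content of Deligne--Mumford, Theorem~2.4. One must actually show that the $\Gal(L/\Kq)$-action on the stable model over $\OL$ is trivial, and relating the unipotence of inertia on $T_\ell(J)$ to the inertia action on the special fiber (through the component group and the homology of the dual graph) is not a one-line deduction. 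The paper does not attempt this: it cites Deligne--Mumford, Theorem~2.4 directly for the biconditional, together with Romagny, Lemma~3.3.5 (see also Liu, Section 7.5) for the multiplicative-reduction statement, and you should do the same rather than sketch the descent.

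In the ``furthermore'' part, the detour through Frey hyperelliptic curves with rational normalization is out of place (the theorem is stated for arbitrary curves) and also unnecessary: the claim is only that the toric rank is positive, not that the reduction is purely toric. With $\bar{C}$ integral, the dual graph has a single vertex, so each node contributes a self-loop and hence a $1$-cycle; bad semistable reduction guarantees at least one node, so the first Betti number is $\geq 1$ and the toric rank is positive. Rationality of the normalization never enters.
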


\begin{proof}
This is \cite[Theorem 3.8]{ChenKoutsianas1}. The proof is based on \cite[Theorem 2.4]{DeligneMumford69} and \cite[Lemma 3.3.5]{Romagny} (see also \cite[Section 7.5]{Liu-book}).
\end{proof}

From the functioriality of the Jacobian $J$ of $C$ we have the following known result \cite[Proposition 9.5.6]{NeronModelsBook90}.

\begin{proposition}\label{prop:good_reduction_Jacobian}
Let $C$ be a curve over $\Kq$ and let $J$ be the Jacobian of $C$. If $C$ has good reduction then $J$ has good reduction.
\end{proposition}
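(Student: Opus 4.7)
The plan is to invoke the representability of the relative Picard functor for smooth proper curves together with the universal property of the Néron model. Concretely, the assumption of good reduction gives a smooth proper model $\calC_\fq$ over $\OKq$ whose generic fiber is $C$. Because $\calC_\fq \to \spec \OKq$ is smooth, proper, and has geometrically connected fibers of dimension $1$, the relative Picard functor $\mathrm{Pic}^0_{\calC_\fq/\OKq}$ is representable by an abelian scheme $\mathcal{J}$ over $\OKq$; this is the content of the standard theorems on relative Jacobians (see e.g.\ Bosch--L\"utkebohmert--Raynaud or Deligne--Rapoport). The generic fiber $\mathcal{J}_{\Kq}$ is, by definition, the Jacobian $J$ of $C$.

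Next, I would identify $\mathcal{J}$ with the N\'eron model of $J$. Since $\mathcal{J}$ is an abelian scheme over $\OKq$, it is in particular smooth, separated, and of finite type; and abelian schemes over a Dedekind base automatically satisfy the N\'eron mapping property for their generic fibers (this is a standard fact, e.g.\ Proposition 1.2.8 in Bosch--L\"utkebohmert--Raynaud). Hence $\mathcal{J}$ is \emph{the} N\'eron model of $J$ over $\OKq$.

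Finally, by definition an abelian variety over $\Kq$ has good reduction precisely when the identity component of the special fiber of its N\'eron model is an abelian variety, equivalently when its N\'eron model is itself an abelian scheme. Since we have just identified the N\'eron model of $J$ with the abelian scheme $\mathcal{J}$, we conclude that $J$ has good reduction at $\fq$.

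The only non-trivial input is the representability of $\mathrm{Pic}^0_{\calC_\fq/\OKq}$ by an abelian scheme; this is the main structural result one has to cite, and it is where the hypothesis that $\calC_\fq$ is smooth and proper with geometrically connected fibers of dimension $1$ is used decisively. Everything else is formal manipulation of the N\'eron model. One could alternatively deduce the proposition from Theorem~\ref{thm:multiplicative_Jacobian} above combined with the criterion of N\'eron--Ogg--Shafarevich (since good reduction of $C$ implies unramifiedness of the $\ell$-adic Tate module of $J$ for $\ell \neq \ch \Ffq$), but the relative Picard argument is cleaner and matches the reference \cite[Prop.~9.5.6]{NeronModelsBook90}.
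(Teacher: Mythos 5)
Your proposal is correct and is essentially the same argument as in the paper: the paper simply cites \cite[Proposition~9.5.6]{NeronModelsBook90}, and your write-up (relative Picard functor of a smooth proper model is an abelian scheme, hence the N\'eron model, hence good reduction) is precisely the content and proof of that reference, as you yourself note at the end.
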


From Serre-Tate \cite[Theorem 1]{SerreTate} we have a Galois representation characterization when $J$ has good reduction. We denote by $J[n]$ the $n$-torsion subgroup of $J$ where $n$ is a positive integer. We also denote by $T_\ell(J)$ the $\ell$-adic Tate module of $J$ where $\ell$ is a rational prime. 

\begin{theorem}[Serre-Tate]\label{thm:Serre_Tate}
Let $C$ be a curve over $\Kq$ and let $J$ be the Jacobian of $C$. Suppose $\ell$ is a rational prime such that $\fq\nmid \ell$. Then the following are equivalent:
\begin{itemize}
    \item $J$ has good reduction.
    \item $J[m]$ is unramified for all positive integers $m$ such that $\fq\nmid m$.
    \item $T_\ell(J)$ is unramified.
\end{itemize}
\end{theorem}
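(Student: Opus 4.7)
The plan is to prove the equivalences by running a cycle $(1) \Rightarrow (2) \Rightarrow (3) \Rightarrow (1)$. The first two implications are short and essentially formal; the third implication is the deep content of the Serre--Tate criterion.

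For $(1) \Rightarrow (2)$, I would use the Néron model $\mathcal{J}$ over $\OKq$ of $J$. Good reduction means $\mathcal{J}$ is an abelian scheme, so multiplication by $m$ is a finite étale isogeny for any $m$ with $\fq \nmid m$. Hence $\mathcal{J}[m]$ is a finite étale group scheme over $\OKq$, which by the standard equivalence between finite étale $\OKq$-schemes and unramified finite $\GalKq$-modules shows $J[m]$ is unramified.

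For $(2) \Rightarrow (3)$, I would take $m = \ell^n$ (permitted since $\fq \nmid \ell$ implies $\fq \nmid \ell^n$) and pass to the inverse limit: $T_\ell(J) = \varprojlim_n J[\ell^n]$. Each $J[\ell^n]$ is unramified by hypothesis, so the inertia subgroup $I_\fq$ acts trivially on every layer, hence trivially on the limit.

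For $(3) \Rightarrow (1)$, which is the main obstacle, I would follow the original approach of Serre--Tate. Let $\mathcal{J}$ denote the Néron model of $J$ over $\OKq$ and $\mathcal{J}^0_s$ the identity component of its special fiber. By Chevalley's structure theorem, $\mathcal{J}^0_s$ fits into an exact sequence
\begin{equation*}
0 \longrightarrow H \longrightarrow \mathcal{J}^0_s \longrightarrow B \longrightarrow 0,
\end{equation*}
with $B$ an abelian variety and $H$ a connected affine algebraic group, which is in turn an extension of a torus $T$ by a unipotent group $U$. The key input is the identification $T_\ell(\mathcal{J}^0_s) \simeq T_\ell(J)^{I_\fq}$ (this comes from the fact that the $\ell$-primary torsion of $\mathcal{J}^0$ with $\ell \neq \ch(\Ffq)$ is étale and the canonical reduction map identifies invariants under inertia with torsion of the special fiber; the core reference is \cite{SerreTate}). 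Assuming $T_\ell(J)$ is unramified, the right-hand side has rank $2\dim J = 2g$. On the other hand, the Tate module of $\mathcal{J}^0_s$ has rank $2\dim B + \dim T$ (since $U$ has trivial $\ell$-primary torsion for $\ell \neq \ch(\Ffq)$). Combining with $\dim B + \dim T + \dim U = g$ forces $\dim T = \dim U = 0$, so $\mathcal{J}^0_s = B$ is an abelian variety and $J$ has good reduction. The hard part is the rank-identification $T_\ell(\mathcal{J}^0_s) \simeq T_\ell(J)^{I_\fq}$, which requires a careful analysis of the smooth model and the behavior of $\ell$-primary torsion under reduction; this is where the Serre--Tate paper does its real work and I would quote their Theorem~1 for the essential input rather than redo it.
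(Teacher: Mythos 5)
The paper gives no proof of this statement: it is stated as a direct citation of Serre--Tate~\cite[Theorem 1]{SerreTate}, so there is no internal argument to compare against. Your write-up is a correct reconstruction of the original Serre--Tate argument. The implications $(1)\Rightarrow(2)$ (the N\'eron model is an abelian scheme with $[m]$ finite \'etale for $m$ prime to the residue characteristic, and finite \'etale group schemes over a henselian DVR correspond to unramified Galois modules) and $(2)\Rightarrow(3)$ (take $m=\ell^n$ and pass to the limit) are both correct and formal. For $(3)\Rightarrow(1)$ you correctly isolate the decisive input, namely the identification $T_\ell(\mathcal{J}^0_s)\simeq T_\ell(J)^{I_\fq}$ coming from the reduction map on the N\'eron model (injective on prime-to-$p$ torsion, surjective by smoothness and Hensel, with the finite component group contributing nothing to the Tate module), and the ensuing rank count via the Chevalley decomposition $0\to T\times U\to\mathcal{J}^0_s\to B\to 0$: the hypothesis forces $2\dim B+\dim T=2g=2(\dim B+\dim T+\dim U)$, so $\dim T=\dim U=0$ and the special fiber is abelian. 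This is precisely the strategy of Serre--Tate's Theorem~1, and deferring to their paper for the reduction-map lemma is the reasonable thing to do given its technical depth; the paper does likewise by citing rather than proving.
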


Finally, we give a last definition that we need later.

\begin{definition}
Suppose $F$ is a hyperelliptic model over $\OKq$ of a hyperelliptic curve $C$ over $\Kq$ as in \eqref{eq:long_weierstrass_model}. If $h(x) = b_{g_C}x^{g_C} + \cdots + b_0$ and $g(x) = a_n x^n + \cdots + a_0$ where $n=2g_c+1$, then we define the \textit{valuation vectors over $\Kq$} of this hyperelliptic model as the pair of vectors
\begin{equation*}
    (\vfq(a_0), \cdots, \vfq(a_{n})) \qquad (\vfq(b_0), \cdots, \vfq(b_{g_C})).
\end{equation*}
\end{definition}

\subsection{Weil-Deligne representations and local inertia types}\label{sec:Weil_Deligne_representations}

In this section we recall basic definitions and material about Weil-Deligne representations and local inertia types. Our main reference is \cite{DembeleFreitasVoight22}. We also recommend the online notes by Wiese \cite{Wiese12}.

Let $\pifq$ be an uniformizer of $\fq$, $\Ffq$ the residue field of $\OKq$ with $\#\Ffq=q$, $\vfq$ the valuation of $\Kq$ with $\vfq(\pifq)=1$ and $|\cdot|_{\fq}:~\Kq^\times\rightarrow\R^\times_{>0}$ the normalized absolute value with respect to $\vfq$. We denote by $\Kqun\subset \bKq$ the maximal unramified extension of $\Kq$.

Let $\WKq$ be the Weil group of $\Kq$ that is the subgroup of $\GalKq$ which consists of the elements whose image in $\Gal(\bFfq/\Ffq)$ is a power of the Frobenius automorphism. We denote by $\IKq$ the inertia subgroup of $\WKq$ which fits on the exact sequence
\begin{equation*}
    1\rightarrow\IKq\rightarrow\WKq\rightarrow\Z\rightarrow 1.
\end{equation*}

A group homomorphism $\chi:~\WKq\rightarrow\C^\times$ with open kernel is called \textit{quasicharacter} and if $|\chi(g)|=1$ for all $g\in\WKq$ then we call $\chi$ \textit{(unitary) character}. Let $\omega:~\WKq\rightarrow\C^\times$ be the quasicharacter corresponding to the norm quasicharacter $|\cdot|_{\fq}$, so that $\omega(g)=q^{-a(g)}$ where the image of $g$ in $\Gal(\bFfq/\Ffq)$ is equal to $\Frobp^{a(g)}$ and $\Frobp$ is the geometric Frobenius element of $\WKqab$ with $\WKqab$ the maximal abelian quotient of $\WKq$. We denote by $\Art_{\Kq}:\Kq^{\times}\rightarrow \WKqab$ the Artin recipocity map from local class field theory.

\begin{definition}
A \textit{Weil-Deligne representation} is a pair $(\rho, N)$ such that
\begin{itemize}
    \item $\rho:~\WKq\rightarrow\GL_n(\C)$ is homomorphism with open kernel
    \item $N\in M_n(\C)$ is nilpotent and it holds 
    \begin{equation*}
        \rho(g)N\rho(g)^{-1}=\omega(g)N,\qquad~\text{for all }g\in\WKq. 
    \end{equation*}
\end{itemize}
Two Weil-Deligne representations $(\rho, N)$ and $(\rho^\prime, N^\prime)$ are called \textit{isomorphic} if there exists $P\in\GL_n(\C)$ such that $\rho^\prime(g)=P\rho(g)P^{-1}$ for all $g\in\WKq$ and $N^\prime=PNP^{-1}$.

Let $E_\lambda$ be a finite extension of $\Q_\ell$ where $q \not= \ell$. A continuous representation $\rho:~\WKq\rightarrow\GL_n(E_\lambda)$ is called a $\lambda$-adic representation (also called a $\ell$-adic representation as $E_\lambda \hookrightarrow \overline{\Q}_\ell$).
\end{definition}

\begin{definition}\label{def:WD_type}
An \textit{inertial Weil-Deligne type (or WD-type)} is an isomorphism class of Weil-Deligne representations restricted to inertia.
\end{definition}

The following proposition relates $\ell$-adic representations and Weil-Deligne representations \cite{Tate-Corvallis, Wiese12} (see also \cite[Section 8]{Deligne73}). To make the transition from $E_\lambda$ to $\C$ in the definition of Weil-Deligne representation, one uses the isomorphism $\overline{\Q}_\ell \cong \C$.
\begin{proposition}\label{prop:l_adic_weil_deligne} Let $E_\lambda$ be a finite extension of $\Q_\ell$. There exists a bijection between Weil-Deligne representations and $\ell$-adic representations $\WKq\rightarrow\GL_n(E_\lambda)$.

In particular, if $\rho:~\GalKq\rightarrow\GL_n(E_\lambda)$ is a $\ell$-adic representation then $\rho$ gives a unique Weil-Deligne representation which we denote by $WD(\rho)$. 
\end{proposition}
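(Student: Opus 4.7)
The plan is to construct explicit mutually inverse maps between $\ell$-adic representations and Weil--Deligne representations, with the nilpotent operator $N$ extracted via Grothendieck's $\ell$-adic monodromy theorem. The asymmetry between the two notions is that a Weil--Deligne representation $(\rho', N)$ is required to have $\rho'$ with \emph{open kernel} (i.e.\ locally constant with values in $\overline{\Q}_\ell^\times \cong \C^\times$), whereas an $\ell$-adic representation is merely continuous with respect to the $\ell$-adic topology on $GL_n(E_\lambda)$, which allows nontrivial $\ell$-adic behavior on inertia. The nilpotent $N$ will absorb precisely this $\ell$-adic behavior.

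First I would recall the $\ell$-adic tame character: fix a compatible system of $\ell$-power roots of $\pi_\fq$ in $\overline{K}_\fq$ and define the continuous homomorphism $t_\ell \colon I_{K_\fq} \to \Z_\ell$ by $\sigma(\pi_\fq^{1/\ell^n})/\pi_\fq^{1/\ell^n} = \zeta_{\ell^n}^{t_\ell(\sigma) \bmod \ell^n}$. This factors through the tame quotient of inertia. Next I would invoke Grothendieck's $\ell$-adic monodromy theorem: given any continuous $\rho \colon \GalKq \to GL_n(E_\lambda)$ with $q \ne \ell$, there exists an open subgroup $H \subset I_{K_\fq}$ on which $\rho$ acts unipotently. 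The reason is that the wild inertia is pro-$q$ and so its image in any pro-$\ell$ subgroup of $GL_n(E_\lambda)$ must be finite, while the tame quotient is topologically generated by one element whose eigenvalues (being simultaneously roots of unity and $\ell$-adic units close to $1$ after passing to a finite-index subgroup) force quasi-unipotence. Consequently there is a unique nilpotent matrix $N \in M_n(E_\lambda)$ with the property that for every $\sigma \in H$,
\begin{equation*}
  \rho(\sigma) = \exp(t_\ell(\sigma) N).
\end{equation*}

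Given this $N$, I would define the candidate Weil--Deligne pair by
\begin{equation*}
  \rho'(g) = \rho(g)\, \exp(-t_\ell(g)\, N) \qquad \text{for } g \in W_{K_\fq},
\end{equation*}
where $t_\ell$ is extended to $W_{K_\fq}$ via any chosen lift of Frobenius (the extension is inessential on $H$). By construction $\rho'|_H$ is trivial, so $\rho'$ has open kernel and hence can be viewed as taking values in $GL_n(\C)$ via a choice of isomorphism $\overline{\Q}_\ell \cong \C$. The conjugation relation $\rho'(g) N \rho'(g)^{-1} = \omega(g) N$ must then be verified: this follows by computing $\rho(g) \rho(\sigma) \rho(g)^{-1} = \rho(g \sigma g^{-1})$ for $\sigma \in H$, using that conjugation by Frobenius scales $t_\ell$ by $q$, and then reading off how $N$ must transform from the uniqueness of the exponent. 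In the reverse direction, given $(\rho', N)$ one sets $\rho(g) = \rho'(g) \exp(t_\ell(g) N)$ and checks continuity in the $\ell$-adic topology (using that $t_\ell$ is continuous with values in $\Z_\ell$ and that $\exp$ converges on nilpotent matrices as a polynomial).

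The main obstacle is the appeal to Grothendieck's monodromy theorem; everything else is formal bookkeeping about the two formulas being mutually inverse and the conjugation relation being preserved. I would conclude by noting that since $N$ and $\rho'$ are uniquely determined by $\rho$, the assignment $\rho \mapsto WD(\rho) = (\rho', N)$ is well-defined, giving the claimed bijection and, in particular, the functor from $\ell$-adic Galois representations of $\GalKq$ (restricted to $W_{K_\fq}$) to Weil--Deligne representations used in the subsequent discussion of local inertia types.
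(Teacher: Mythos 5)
Your proof is correct and is essentially the standard argument found in the references the paper cites for this proposition (Tate's Corvallis article and Deligne's Antwerp II article); the paper itself does not reprove it but simply cites those sources. You correctly isolate the two key ingredients — the $\ell$-adic tame character $t_\ell$ and Grothendieck's $\ell$-adic monodromy theorem — and the formulas $\rho'(g)=\rho(g)\exp(-t_\ell(g)N)$ and $\rho(g)=\rho'(g)\exp(t_\ell(g)N)$ are exactly the mutually inverse constructions used there. Two small points worth noting for precision: the pair $(\rho',N)$ you produce depends on the choice of Frobenius lift used to extend $t_\ell$ to $W_{K_\fq}$ (and on the uniformizer implicit in $t_\ell$), so what is canonical is the isomorphism class of $WD(\rho)$, i.e.\ the bijection is really between isomorphism classes on both sides; and when you say conjugation by Frobenius scales $t_\ell$ by $q$, you should make sure that matches the paper's normalization of $\omega$, which sends the geometric Frobenius to $q^{-1}$ — with that convention the conjugation relation $\rho'(g)N\rho'(g)^{-1}=\omega(g)N$ comes out as stated.
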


For the case $n=2$ there exists an explicit classification of Weil-Deligne representations which we explain below.

\subsubsection{Classification of Weil-Deligne representations for $n=2$}\label{sec:Weil_Deligne_rep}

Every $2$-dimensional Weil-Deligne representation is isomorphic to one of the representations below.

\vspace{0.3cm}
\textbf{Principal series:} Let $\chi_1,\chi_2:\WKq\rightarrow\C^\times$ be two quasicharacters such that $\chi_1\chi_2\neq\omega^{\pm 1}$. Let
\begin{equation*}
    \PS(\chi_1, \chi_2):=\chi_1\oplus\chi_2.
\end{equation*}
The Weil-Deligne representation $(\PS(\chi_1, \chi_2), 0)$ is called the \textit{principal series} representation associated to $\chi_1,\chi_2$. 

The conductor exponent of the principal series representation is given by
\begin{equation*}
    \condexp(\PS(\chi_1, \chi_2)) = \condexp(\chi_1) + \condexp(\chi_2).
\end{equation*}

\vspace{0.3cm}
\textbf{Special or Steinberg representations:} Let $\chi:\WKq\rightarrow\C^\times$ be a quasicharacter. Let
\begin{equation*}
    \St(\chi) := \chi\omega\oplus\chi \qquad \text{and} \qquad N =\begin{pmatrix} 0 & 1 \\ 0 & 0 \end{pmatrix}.
\end{equation*}
The Weil-Deligne representation $(\St(\chi), N)$ is called the \textit{special or Steinberg} representation associated to $\chi$.

The conductor exponent of the the special/Steinberg representation is given by
\begin{equation*}
    \condexp(\St(\chi)) = 
    \begin{cases}
        2\condexp(\chi), & \text{if }\chi\text{ is ramified},\\
        1, & \text{otherwise}.
    \end{cases}
\end{equation*}

\vspace{0.3cm}
\textbf{Supercuspidal representations:} Suppose $\rho$ is an irreducible $2$-dimensional representation of $\WKq$. Then the Weil-Deligne representation $(\rho, 0)$ is called \textit{supercuspidal}. The family of supercuspidal Weil-Deligne representations is divided in two subfamilies according to the projective image of $\rho$ in $\PGL_2(\C)$. When the projective image of $\rho$ is dihedral then $\rho$ is called \textit{nonexceptional supercuspidal representation}, otherwise $\rho$ is called \textit{exceptional supercuspidal representation} with projective image isomorphic to $A_4$ or $S_4$.

\vspace{0.2cm}
\textit{Nonexceptional supercuspidal representations:} Let $F$ be a quadratic extension of $\Kq$ and $\sigma\in\WKq$ a lift of the non-trivial element of $\Gal(F/\Kq)$. For a quasicharacter $\chi:~W_F\rightarrow\C^\times$ we define the \textit{$\sigma$-conjugate} of $\chi$
\begin{align*}
    \chi^\sigma & :W_F\rightarrow\C^\times,\\
    & \chi^\sigma(g) = \chi(\sigma^{-1}g\sigma).
\end{align*}
Because $W_F$ is a normal subgroup of $\WKq$, the $\sigma$-conjugate of $\chi$ is well-defined and independent of $\sigma$.

Suppose $\chi\neq \chi^{\sigma}$. The Weil-Deligne representation $(\Ind_{W_F}^{\WKq}\chi, 0)$, where $\Ind_{W_F}^{\WKq}\chi$ is the induced representation of $\chi$ from $W_F$ to $\WKq$, is called the \textit{nonexceptional supercuspidal representation} associate to $\chi$. The assumption that $\chi\neq \chi^{\sigma}$ ensures that $\Ind_{W_F}^{\WKq}\chi$ is irreducible. If $\chi_F$ is the quadratic character of $\WKq$ corresponding to the quadratic extension $F$, then
\begin{equation*}
    \condexp(\Ind_{W_F}^{\WKq}\chi) = 
    \begin{cases}
        2\condexp(\chi), & \text{if }F/\Kq\text{ is unramified},\\
        \condexp(\chi) + \condexp(\chi_F), & \text{otherwise}.
    \end{cases}
\end{equation*}

\vspace{0.2cm}
\textit{Exceptional supercuspidal representations:} This case exists only when the residual characteristic is $2$. Let $F/\Kq$ be a tamely ramified cubic extension and $M/F$ a ramified quadratic extension. Let $\chi$ be a quasicharacter of $W_M$ with the property that there is no character $\theta:F^\times\rightarrow\C^\times$ such that $\chi\circ\Art_M=\theta\circ\Art_F\circ\Norm_{M/F}$.

For a given triple $(\chi, M, F)$ the Weil-Deligne representation $(\rho, 0)$ such that
\begin{equation*}
    \rho\mid_{W_{F}}=\Ind_{W_M}^{W_F}\chi,
\end{equation*}
is called an \textit{exceptional supercuspidal representation}. Conversely, any exceptional supercuspidal representation is uniquely deteremined by a triple $(\chi, M, F)$ \cite[Section 12.1.3]{Carayol86}.

\vspace{0.2cm}
The reason why we are interested in inertia types is the following proposition (see for instance \cite{Ulmer}).

\begin{proposition}\label{prop:condexp_padic_weil_deligne}
Let $E_\lambda$ be a finite extension of $\Q_\ell$. Suppose $\rho_\ell:~\GalKq\rightarrow\GL_n(E_\lambda)$ is a $\ell$-adic representation and $WD(\rho) = (\rho, N)$ the corresponding Weil-Deligne representation of $\rho$. Then,
\begin{equation*}
    \condexp(\rho)=\condexp(WD(\rho)).
\end{equation*}
\end{proposition}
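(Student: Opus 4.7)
The plan is to verify the equality of conductor exponents by matching the two formulas term-by-term using Grothendieck's $\ell$-adic monodromy theorem, which is the same result that underlies the bijection in Proposition \ref{prop:l_adic_weil_deligne}. Recall that for a continuous $\ell$-adic representation $\rho_\ell : \GalKq \to \GL(V)$ (with $\ell \neq q$), the Artin conductor exponent is
\[
\condexp(\rho_\ell) = \sum_{i \geq 0} \frac{1}{[G_0:G_i]} \dim_{E_\lambda}\bigl(V / V^{G_i}\bigr),
\]
where $\{G_i\}_{i \geq 0}$ is the lower-numbering ramification filtration of $\GalKq$ and $G_0 = \IKq$. For the associated Weil-Deligne representation $(\rho, N) = WD(\rho_\ell)$ on the same space $V$, the standard formula reads
\[
\condexp(\rho, N) = \condexp(\rho|_{\IKq}) + \dim V^{\rho(\IKq)} - \dim (\ker N)^{\rho(\IKq)},
\]
where $\condexp(\rho|_{\IKq})$ is computed by the same ramification sum applied to the smooth (finite-image on inertia) representation $\rho$.

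Next I would invoke Grothendieck's monodromy theorem to express $\rho_\ell(\sigma) = \rho(\sigma) \exp\bigl(t_\ell(\sigma) N\bigr)$ on $\IKq$, where $t_\ell : \IKq \twoheadrightarrow \Z_\ell$ is the $\ell$-adic tame character. Since $\ell \neq q$, the wild inertia subgroup $P \subseteq \IKq$ lies in $\ker t_\ell$, so $\rho_\ell|_P = \rho|_P$. This immediately equates the Swan (wild) contributions to the ramification sum, namely the tails with $i \geq 1$, for $\rho_\ell$ and for $\rho$. The problem therefore reduces to matching the tame ($i = 0$) term together with the Weil-Deligne monodromy correction.

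The tame term for $\rho_\ell$ is $\dim V - \dim V^{\rho_\ell(\IKq)}$ and for $\rho$ it is $\dim V - \dim V^{\rho(\IKq)}$. The key linear-algebra identification I would establish is
\[
V^{\rho_\ell(\IKq)} = V^{\rho(\IKq)} \cap \ker N.
\]
For the forward inclusion, an $\IKq$-invariant vector $v$ under $\rho_\ell$ is in particular $H$-invariant for an open subgroup $H \leq \IKq$ on which $\rho$ acts trivially, so $\exp(t N) v = v$ for $t$ ranging over the open subgroup $t_\ell(H) \subseteq \Z_\ell$; since $N$ is nilpotent, $\exp(tN) v - v$ is a polynomial in $t$ with coefficients in $V$ whose vanishing on an infinite set forces each $N^k v = 0$ for $k \geq 1$, hence $Nv = 0$ and then $\rho(\sigma) v = v$ on all of $\IKq$. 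The reverse inclusion is immediate from the product formula. Substituting yields
\[
\condexp(\rho_\ell) - \condexp(\rho|_{\IKq}) = \dim V^{\rho(\IKq)} - \dim (\ker N)^{\rho(\IKq)},
\]
which is exactly the monodromy correction appearing in $\condexp(\rho, N)$, completing the argument. The main obstacle is the careful bookkeeping at the boundary between tame and wild ramification, namely the identification $V^{G_0} = V^{\rho_\ell(\IKq)}$ and the fact that the Swan contributions depend only on the wild inertia action; both are standard once Grothendieck's theorem is in hand, so the proof is essentially a direct consequence of Proposition \ref{prop:l_adic_weil_deligne}.
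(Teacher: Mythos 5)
Your proof is correct and follows the standard argument via Grothendieck's $\ell$-adic monodromy theorem: the paper itself gives no proof, only a pointer to \cite{Ulmer}, and your argument—equating the Swan contributions via $\rho_\ell|_P = \rho|_P$, then matching the tame terms through the identification $V^{\rho_\ell(\IKq)} = (\ker N)^{\rho(\IKq)}$—is precisely the route taken there.
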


\subsection{The basic strategy for hyperelliptic curves in families}

Let $C/K$ be a hyperelliptic curve of genus $g\geq 2$ with Jacobian $J$. One of the most important arithmetic invariants of $C$ is its \textit{conductor} which measures the arithmetic complexity of $C$. The determination of the conductor of a hyperelliptic curve is not an easy problem. However, thanks to the work of many people there are methods of computing the conductor of a {\it fixed} hyperelliptic curve \cite{Liu94a, Liu94b, DokchitserDoris19, DDMM-local} at an odd prime or in genus $2$. Recently in \cite{maistret}, these methods have been successfully applied to compute the conductor at an odd prime of the Frey abelian varieties of signature $(p,p,r)$ and $(r,r,p)$.

In our case, we want to determine the conductor of a parametric family of hyperelliptic curves.  In the elliptic curves case Tate's algorithm is the key ingredient in the determination of the conductor of a parametric family of elliptic curves. There is no analogous version of Tate's algorithm for higher genus hyperelliptic curves currently known. Therefore, the determination of the conductor of our Frey hyperelliptic curves becomes a challenging problem, especially at the prime $2$.

We assume that $J/K$ is of $\GL_2(K)$-type over $K$. Let $\fp$ be a prime of $K$ and $\rhoJp:G_K\rightarrow\GL_{2}(\Kp)$ the Galois representation arising by the action of $G_K$ on the $\fp$-adic Tate module of $J$. Let $\brhoJp$ be the residual representation of $\rhoJp$. In the modular method we need the \textit{Artin conductor $\n(\brhoJp)$ of $\brhoJp$ away from $p$} (this is also called the \textit{Serre level of $\brhoJp$}). The Artin conductor away from $p$ will only take on a finite set of possible values and will be the level of the Hilbert newforms we need to compute.

We compute the conductor of $\brhoJp$ away from $\fp$ by computing the conductor of $\rhoJp$ and then we determine the primes over which $\rhoJp$ degenerates. In general, $\condexp_{\fq}(\brhoJp)\leq\condexp_{\fq}(\rhoJp)$ and we say $\rhoJp$ \textit{degenerates at the prime $\fq$} if $\condexp_{\fq}(\brhoJp)<\condexp_{\fq}(\rhoJp)$.

The general method of computing the Artin conductor of $\brhoJp$ in \cite{Chen-2022-xhyper, ChenKoutsianas1} has the following key steps:
\begin{itemize}
    \item For each prime $\fq$ we explicitly compute the extension $L/\Kq$ over $C$ attains semistable reduction. From Theorem \ref{thm:semistable_reduction} we know that such an extension exists and it is of finite index.
    
    \item On the assumption that we have established modularity of $J/K$, the $\rho_{J,\lambda}$ lie in a strictly compatible system of $\lambda$-adic representation, as $\lambda$ varies over the primes of $K$ not dividing the prime $q$ below $\Fq$. This means $\rho_{J,\lambda} \mid_{D_\Fq}$ have the same associated Weil-Deligne representation and hence the same conductor dependent only on $\rho_{J,\lambda}\mid_{I_\Fq}$. Recalling
    \begin{equation*}
    \rho_{J,\ell} = \oplus_{\lambda \mid \ell} \rho_{J,\lambda}, 
    \end{equation*} 
    where $\rho_{J,\ell}$ is the Galois representation obtained by the action of $G_K$ on $T_\ell(J)$. We see from Theorem~\ref{thm:Serre_Tate} that $J/K$ has good reduction at $\fq$ if and only if any $\rho_{J,\lambda} \mid_{D_\Fq}$ is unramified. If this is the case, then all the $\rho_{J,\lambda} \mid_{D_\Fq}$ are unramified.

    \item As long as $L$ is known we determine the inertia type of $\rho_{J,\lambda} \mid_{D_\fq}$, where $D_{\fq}$ is the decomposition group of $\fq$. We explicitly describe all the objects that characterize each inertia type as we have explained in section \ref{sec:Weil_Deligne_rep}.
    
    \item From the conductor exponent formulas of the inertia type in section \ref{sec:Weil_Deligne_rep} we compute the conductor exponent of $\rhoJp$ at $\fq$ using Proposition \ref{prop:condexp_padic_weil_deligne} and specializing to $\lambda = \Fp$.
    
    \item We determine the primes $\fq$ where the representation $\rhoJp$ degenerates and we compute the Artin conductor of $\brhoJp$ away from $p$.
\end{itemize}
In \cite{ChenKoutsianas1}, the strategy of the first step is used, but we complete the conductor calculation without making extensive mention of inertial types. Furthermore, we employ a different technique at the prime $\Fq_2$ for $J_r^-$: using the relation in Example~\ref{relation-1}, we ``propagate'' the computation of conductors away from $\Fq_r$ from the Legendre elliptic curve to the Frey hyperelliptic curve, up to quadratic twist. This is significantly easier to do than determining explicit semistable models. 

In the next section, we give an explicit example of conductor computation for a Frey hyperelliptic curve using the method of inertia types.

\subsection{Example}

Let $C=C_5(a,b)$ be the Frey hyperelliptic curve of the generalized Fermat equation
\begin{equation*}
    x^5 + y^5 = 3z^p,
\end{equation*}
as we have described in Section \ref{sec:r_r_p_Frey_curves} where $(a,b,c)$ is a non-trivial primitive solution of the last equation. We denote by $J$ the Jacobian of $C$. We recall that $J/K$ is of $\GL_2(K)$-type over $K$ where $K=\Q(\zeta_5)^+$. Our goal is to compute the conductor of $\rhoJp$ and $\brhoJp$ using the method we described above. For more details about the techniques we will present below see \cite{Chen-2022-xhyper}. In particular, all the results below together with their proofs are the same as in \cite[Section 9]{Chen-2022-xhyper} but explained in the case $r=5$ for concreteness.

\begin{proposition}
Let $q$ a prime such that $q\nmid 10(a^5 + b^5)$. Then, $C$ and $J$ have good reduction over $\Q_q$.
\end{proposition}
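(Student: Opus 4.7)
The plan is to exhibit the given affine equation of $C_5(a,b)$ as a hyperelliptic model over $\Z_q \subseteq K_\fq$ with unit discriminant, and then apply the results from Section~\ref{sec:hyperelliptic_preliminaries} in sequence. Since $a, b \in \Z$, the equation
\begin{equation*}
    y^2 = x^5 + 5abx^3 + 5a^2b^2 x + (b^5 - a^5)
\end{equation*}
has integral coefficients, so it defines a model $\calC_\fq$ of $C$ over $\mathcal{O}_{K_\fq}$ for any prime $\fq$ of $K$ above $q$ (in the notation of Section~\ref{sec:hyperelliptic_preliminaries}, this is a short Weierstrass model with $h(x) = 0$).

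Next, I would invoke the discriminant formula \eqref{Kraus-discriminant}, which for $r=5$ gives
\begin{equation*}
    \Delta(\calC_\fq) = 2^{8}\, 5^{5}\, (a^5 + b^5)^{4}.
\end{equation*}
The hypothesis $q \nmid 10(a^5 + b^5)$ is exactly the statement that $q \notin \{2,5\}$ and $q \nmid (a^5+b^5)$, so each of the factors $2^8$, $5^5$, $(a^5+b^5)^4$ is a unit in $\Z_q$, and therefore in $\mathcal{O}_{K_\fq}$. Hence $v_\fq(\Delta_{\calC_\fq}) = 0$.

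Applying Proposition~\ref{prop:good_reduction_disc} then yields that $C$ has good reduction at $\fq$ (and thus over $\Q_q$, since $K/\Q$ is unramified at such primes). Finally, by Proposition~\ref{prop:good_reduction_Jacobian}, the good reduction of $C$ propagates to its Jacobian $J$. There is essentially no obstacle here: the proof is simply a matter of reading off the right discriminant formula and checking that none of its prime factors survive under the hypothesis on $q$. The only minor care is to remark that the given equation already has the correct integral shape, so no change of model is needed to detect good reduction via the discriminant criterion.
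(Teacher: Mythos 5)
Your proof is correct and follows essentially the same route as the paper: invoke the discriminant formula \eqref{Kraus-discriminant} to see that $v_\fq(\Delta) = 0$ when $q \nmid 10(a^5+b^5)$, then apply Proposition~\ref{prop:good_reduction_disc} for $C$ and Proposition~\ref{prop:good_reduction_Jacobian} for $J$. You spell out a couple of steps the paper leaves implicit (integrality of the given model and the passage to the Jacobian), but the argument is the same.
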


\begin{proof}
From \eqref{Kraus-discriminant} we recall that
\begin{equation*}
    \Delta(C) = 2^85^5(a^5 + b^5)^4.
\end{equation*}
Hence, by Proposition \ref{prop:good_reduction_disc} we get that $C$ has good reduction over $\Q_q$.
\end{proof}

Let $\fqf$ be the unique prime of $K$ above $5$.

\begin{proposition}\label{prop:5_5_p_field_good_reduction}
Let $L/\Kqf$ be a finite extension with ramification index $4$. If $5\nmid a + b$ then $C$ and $J$ have good reduction over $L$. In particular, there is no a totally ramified quadratic extension of $\Kqf$ over $C$ and $J$ attain good reduction.
\end{proposition}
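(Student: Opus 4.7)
I plan to exhibit an explicit totally ramified degree-$4$ extension $L_0/\Kqf$ over which $C = C_5(a,b)$ admits an integral Weierstrass model with separable reduction; the first assertion for arbitrary ramification-$4$ extensions then follows via N\'eron--Ogg--Shafarevich, and the second from a parity obstruction on valuations of root differences.

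First, I factor the Weierstrass polynomial $f(x) = x^5 + 5abx^3 + 5a^2b^2 x + (b^5 - a^5)$ over $M := \Kqf(\zeta_5)$, a totally ramified quadratic extension with uniformiser $\lambda := \zeta_5 - 1$. Matching elementary symmetric functions identifies the roots as $x_j = \zeta_5^j a - \zeta_5^{-j} b$ for $j = 0, \ldots, 4$, and the standard expansion $\zeta_5^j - 1 \equiv j\lambda \pmod{\lambda^2}$ gives
\[
x_j - (a-b) = (\zeta_5^j - 1)(a + b\zeta_5^{-j}) \equiv j(a+b)\lambda \pmod{\lambda^2}.
\]
The hypothesis $5 \nmid a+b$ then yields $v_M(x_j - (a-b)) = 1$ for $j \neq 0$, and the scaled quantities $\alpha_j := (x_j - (a-b))/\lambda \in \mathcal{O}_M$ reduce modulo $\lambda$ to the pairwise distinct residues $\{0, (a+b), 2(a+b), 3(a+b), 4(a+b)\} \subset \F_5$.

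Next I take $L_0 := M(\sqrt{\lambda})$, a ramified quadratic extension of $M$, so that $L_0/\Kqf$ is totally ramified of degree $4$ with uniformiser $\pi_{L_0} := \sqrt{\lambda}$ and residue field $\F_5$. Substituting $x = \pi_{L_0}^2 u + (a-b) = \lambda u + (a-b)$ and $y = \pi_{L_0}^5 z$ transforms each factor as $x - x_j = \lambda(u - \alpha_j)$, so $f(x) = \lambda^5 \prod_j(u - \alpha_j)$; since $\pi_{L_0}^{10} = \lambda^5$, the equation simplifies to the integral Weierstrass model
\[
z^2 = \prod_{j=0}^{4} (u - \alpha_j),
\]
whose reduction modulo $\pi_{L_0}$ is $\prod_{k \in \F_5}(u - k(a+b)) = u^5 - (a+b)^4 u \equiv u^5 - u \pmod 5$ by Fermat's little theorem, which is separable in $\F_5[u]$. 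Propositions~\ref{prop:good_reduction_disc} and~\ref{prop:good_reduction_Jacobian} now yield good reduction for $C$ and $J$ over $L_0$, and since $e_L = 4$ is coprime to the residue characteristic, the minimal field of good reduction is tame and its inertia is the unique index-$4$ subgroup of the tame quotient of $I_{\fqf}$; Theorem~\ref{thm:Serre_Tate} then extends good reduction to every $L/\Kqf$ with $e_{L/\Kqf} = 4$.

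For the last sentence, suppose for contradiction that some totally ramified quadratic $L/\Kqf$ gives $J$ good reduction, realised by $x = e^2 u + r$, $y = e^5 z$ with $e, r \in L$ (after completing the square). Smoothness of the reduced model forces the roots $\beta_j := (x_j - r)/e^2 \in \mathcal{O}_{LM}$ to satisfy $v_{LM}(\beta_j - \beta_k) = 0$ for all $j \neq k$. The group $\Kqf^\times/(\Kqf^\times)^2$ admits only two ramified square classes, so either $L = M$ (whence $LM = L$) or $L = \Kqf(\sqrt{\pifqf})$ and $LM/L$ is the unramified quadratic extension, obtained by adjoining the square root of a unit whose residue class in $\F_5^\times$ is the non-square $2$. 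In either case $e_{LM/M} = 1$, so $v_{LM}(x_j - x_k) = v_M(x_j - x_k) = 1$ by the first step, while $v_{LM}(e^2) = 2 v_L(e) \cdot e_{LM/L}$ is even; hence $v_{LM}(\beta_j - \beta_k) = 1 - (\text{even})$ is odd, the required contradiction. The main technical subtlety is verifying $e_{LM/L} = 1$ in the non-trivial case, which reduces to a square-class computation in $\Kqf^\times/(\Kqf^\times)^2$.
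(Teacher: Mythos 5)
Your argument is correct in substance but takes a genuinely different route from the paper's. The paper never factors the quintic: for an arbitrary $L$ of ramification index $4$ it applies the substitution $x\mapsto \pi^2x-(b-a)$, $y\mapsto\pi^5y$ with $\pi$ a uniformizer of $L$, reads off from $v_L(5)=8$ and $5\nmid a+b$ that the resulting model is integral with unit discriminant, and deduces the second assertion from the discriminant transformation formula \eqref{eq:discriminant_between_models}: a good model over a totally ramified degree-$e$ extension forces $10e\equiv 0\pmod{40}$, hence $4\mid e$. Your computation of the roots over $M=\Kqf(\zeta_5)$ buys more explicit information (the smooth special fibre $z^2=u^5-u$), and your parity argument on $v_{LM}(\beta_j-\beta_k)$ is the paper's discriminant-parity obstruction unwound root by root; both of these parts check out, including the square-class verification that $e(LM/M)=1$.

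The one step that needs repair is the propagation from your specific field $L_0=M(\sqrt{\lambda})$ to an arbitrary $L$ of ramification index $4$ (note that such an $L$ need not contain $M$, so your explicit model does not literally live over $L$). The identity $I_L=I_{L_0}$ together with Theorem~\ref{thm:Serre_Tate} does give good reduction of $J$ over $L$, but the proposition also asserts good reduction of the curve $C$, and for genus $\geq 2$ good reduction of the Jacobian does not in general imply good reduction of the curve; so quoting Theorem~\ref{thm:Serre_Tate} alone does not finish. You can close the gap either by a descent argument for the stable model (good reduction of $C$ depends only on the inertia subgroup, since the stable model commutes with unramified base change and the completed maximal unramified extensions of $L$ and $L_0$ coincide), or more simply by running your substitution with $\pi_L^2$ in place of $\lambda$ over an arbitrary such $L$: one has $e(LM/M)=2e(LM/L)$ automatically, so the scaled roots $(x_j-(a-b))/\pi_L^2$ are still units of $\calO_{LM}$ with pairwise distinct reductions. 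This last option essentially reproduces the paper's uniform coefficient computation.
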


\begin{proof}
Let $L/\Kqf$ be a totally ramified extension of degree $4$ with ring of integers $\OKqf$ and uniformizer $\pi$. We recall from Section \ref{sec:r_r_p_Frey_curves} that an affine model of $C$ is given by
\begin{equation*}
    C:~y^2= x^5 + 5abx^3 + 5a^2b^2x + b^5 - a^5.
\end{equation*}
We make the change of variables $x\mapsto\pi^2x - (b-a)$, $y\mapsto \pi^5y$ and divide by $\pi^{10}$ to get the model $C^\prime$ over $L$
\begin{align*}
    C^\prime:~y^2 & = x^5 + 5\frac{a - b}{\pi^2}x^4 + 5\frac{2a^2 - 3ab + 2b^2}{\pi^4}x^3 + 5\frac{2a^3 - 3a^2b + 3ab^2 - 2b^3}{\pi^6}x^2\\
    & + 5\frac{a^5 + b^5}{\pi^8(a+b)}x.
\end{align*}
Because $5\nmid a + b$ and $L$ is a totally ramified extension of degree $8$ over $\Q_5$ we get that the valuation vectors over $L$ of $C^\prime$ is 
\begin{equation*}
    (\infty, \geq 0, \geq 0, \geq 0, \geq 0, 0) \qquad (\infty, \infty, \infty).
\end{equation*}
Therefore, the equation $C^\prime$ is an integral model over $L$. From the formula \eqref{eq:discriminant_between_models} we get that $v_{\pi}(\Delta(C^\prime)) = 0$. Thus, by Proposition \ref{prop:good_reduction_disc} the curve $C$ attains good reduction over $L$.

Suppose $C$ attains good reduction over a totally ramified extension $F/\Kqf$ of degree $e$ and uniformizer $\pi_F$. By Proposition \ref{prop:good_reduction_disc} $C$ has an odd degree hyperelliptic model $\calC$ over $F$ such that $v_{\pi_{F}}(\Delta(\calC)) = 0$. Again, by the formula \eqref{eq:discriminant_between_models} and the fact that $v_{\pi_F}(\Delta(C)) = 10e$ we get that $4\mid e$.
\end{proof}

\begin{proposition}
Let $5\nmid a+b$. Then the inertia type of $J/\Kqf$ is a principal series. Moreover, the conductor of $\rho_{J,\lambda}$ at $\fqf$ is $\fqf^2$.
\end{proposition}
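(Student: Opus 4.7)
The plan is to read off the inertia structure from the good-reduction behavior established in Proposition~\ref{prop:5_5_p_field_good_reduction}, then feed it into the Weil--Deligne dictionary of Section~\ref{sec:Weil_Deligne_representations}. Let $L/\Kqf$ be the totally ramified degree-$4$ extension supplied by Proposition~\ref{prop:5_5_p_field_good_reduction} over which $J$ has good reduction, and fix a prime $\lambda$ of the coefficient field $K$ whose residue characteristic is different from $5$. By Theorem~\ref{thm:Serre_Tate}, $\rho_{J,\lambda}|_{G_L}$ is unramified, so $\rho_{J,\lambda}(I_{\Kqf})$ is a finite group of order dividing $[I_{\Kqf}:I_L]=4$; in particular the inertia action is tame.

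Next I would pin down the image to have order exactly $4$. It cannot be trivial: the explicit discriminant $\Delta(C)=2^{8}\cdot 5^{5}(a^{5}+b^{5})^{4}$ has positive $\vfp$-valuation at $\fqf$, and the proof of Proposition~\ref{prop:5_5_p_field_good_reduction} shows every totally ramified extension over which $C$ attains good reduction has degree divisible by $4$, so by Theorem~\ref{thm:Serre_Tate} the inertia action is nontrivial. It also cannot have order $2$: combining $\det\rho_{J,\lambda}|_{I_{\Kqf}}=1$ (which holds by Theorem~\ref{det-cyclotomic} since the cyclotomic character is unramified at $\fqf$) with a cyclic image of order $2$ forces the non-identity element to act as $-I$, so that $\rho_{J,\lambda}|_{I_{\Kqf}}\simeq \chi\oplus\chi$ for a ramified quadratic character $\chi$. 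This character cuts out a totally ramified quadratic extension $F/\Kqf$ for which $\rho_{J,\lambda}|_{G_F}$ is unramified, so Theorem~\ref{thm:Serre_Tate} would grant $J/F$ good reduction, contradicting Proposition~\ref{prop:5_5_p_field_good_reduction}. Hence $\rho_{J,\lambda}(I_{\Kqf})$ is cyclic of exact order $4$.

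With the image identified, I would decompose the inertia action and compute the conductor. Since the image is a finite cyclic group acting on a $2$-dimensional vector space in characteristic zero, Maschke's theorem yields a decomposition $\rho_{J,\lambda}|_{I_{\Kqf}}\simeq \chi_1\oplus \chi_2$ with both $\chi_i$ tame, and the finite image forces the monodromy operator $N$ in the associated Weil--Deligne representation to vanish; this matches the inertial type of a principal series. The unramified-determinant condition forces $\chi_2=\chi_1^{-1}$, and the order-$4$ image forces $\chi_1$ to be a tame character of order exactly $4$, in particular ramified with conductor exponent $1$. Applying the principal series conductor formula from Section~\ref{sec:Weil_Deligne_rep} together with Proposition~\ref{prop:condexp_padic_weil_deligne} then gives $\condexp_{\fqf}(\rho_{J,\lambda})=1+1=2$, so the conductor at $\fqf$ equals $\fqf^{2}$.

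The main obstacle is the exclusion of an order-$2$ image, which crucially relies on the ``no totally ramified quadratic extension'' clause of Proposition~\ref{prop:5_5_p_field_good_reduction}; without that strengthening one could not conclude that both $\chi_1,\chi_2$ are ramified, and the conductor exponent could drop from $2$. Every other step is a clean application of Serre--Tate, Maschke, the determinant computation, and the WD conductor dictionary.
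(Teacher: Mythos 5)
Your proof is correct and follows essentially the same route as the paper's: invoke Proposition~\ref{prop:5_5_p_field_good_reduction} to get good reduction over a totally ramified degree-$4$ extension $L$, use Serre--Tate to conclude that $\rho_{J,\lambda}|_{I_{\Kqf}}$ is trivial on $I_L$ and hence finite, tame, and factors through a cyclic group (the paper phrases this via the unique abelian totally ramified extension; you phrase it via Maschke on the finite image), then feed the decomposition $\chi\oplus\chi^{-1}$ with $\chi$ ramified tame into the principal-series conductor formula.

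However, your closing paragraph misidentifies the logically essential step. You claim that without the ``no totally ramified quadratic extension'' clause one could not conclude both $\chi_1, \chi_2$ are ramified, so the conductor exponent could drop below $2$. This is not so, and your own computation shows it: in the hypothetical order-$2$ case you correctly find $\rho_{J,\lambda}|_{I_{\Kqf}}\simeq\chi\oplus\chi$ with $\chi$ a ramified quadratic character, which gives $\condexp = 1+1 = 2$, the same answer. More generally, once the determinant is known to be unramified at $\fqf$, the diagonal characters satisfy $\chi_2|_{I_{\Kqf}}=\chi_1^{-1}|_{I_{\Kqf}}$, so they are ramified simultaneously; since the inertia action is tame, each then contributes conductor exponent exactly $1$. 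The only thing genuinely needed is that $\rho_{J,\lambda}|_{I_{\Kqf}}$ be nontrivial, which follows already from the degree-divisibility argument in the proof of Proposition~\ref{prop:5_5_p_field_good_reduction} applied to $e=1$. Pinning the image down to exact order $4$ is a clean consistency check against the Proposition, but it is not required for the conductor computation, and this is precisely why the paper skips it.

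One small additional caveat: your remark that the discriminant $\Delta(C)=2^8\cdot 5^5(a^5+b^5)^4$ having positive valuation at $\fqf$ implies bad reduction is not by itself a proof (the given model could be non-minimal). The actual justification, which you also cite, is the degree-divisibility computation in the proof of Proposition~\ref{prop:5_5_p_field_good_reduction}, which does correctly rule out good reduction over $\Kqf$ and over any totally ramified quadratic extension.
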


\begin{proof}
From Proposition \ref{prop:5_5_p_field_good_reduction} we know that $J$ obtains good reduction over any extension $L/\Kqf$ of ramification index $4$. Therefore, $\rho_{J,\lambda} \mid_{D_{\fqf}}$ is unramified over $L$ so the inertia type of $\rho_{J,\lambda}$ is not Steinberg at $\fqf$. Moreover, the inertia type is principal series and not supercuspidal because we can choose $L/\Kqf$ to be abelian. By class field theory such an (unique) extension exists because $4\mid \mathbb{F}_{\fqf}^\times$, where $\mathbb{F}_{\fqf}$ is the residue field associated to $\fqf$.

Let $L/\Kqf$ be the totally ramified abelian extension of degree $4$. The extension $L$ corresponds to a character $\chi$ of conductor $\fqf$. From \cite[Lemma 7.13]{Chen-2022-xhyper} we have that $\det(\rho_{J,\lambda})$ is the cyclotomic $\ell$-adic character, so $(\rhoJp\otimes\overline{\Q}_\ell)\mid_{I_{\fqf}}$ is non-trivial and isomorphic\footnote{This follows from the classification of finite subgroups of $\GL_2(\C)$ \cite{NguyenPutTop08}.} to $(\chi^k\otimes\chi^{-k})\mid_{I_{\fqf}}$ for some $k\in\Z$. From above $\chi^{k}\mid_{I_{\fqf}}$ can not be trivial, so $\chi^k$ is of conductor $\fqf$. Hence, from the formulas in Section \ref{sec:Weil_Deligne_rep} we get that the conductor of $\rho_{J,\lambda}$ at $\fqf$ is $\fqf^2$.
\end{proof}

Working as above we can compute the conductor of $\rhoJp$ at any prime $\fq$ of $K$. In particular, in \cite[Theorem 9.16]{Chen-2022-xhyper} the authors prove the following.

\begin{theorem}
Assume $a\equiv 0\pmod{2}$ and $b\equiv 1\pmod{4}$. Then the conductor $\mathcal{N}$ of $\rho_{J,\lambda}$ is given by
\begin{equation*}
    \mathcal{N} = 2^2\cdot\fqf^2\cdot\mathfrak{n},
\end{equation*}
where $\mathfrak{n}$ is the squarefree product of all prime ideal dividing $a^5 + b^5$ which are coprime to $10$.
\end{theorem}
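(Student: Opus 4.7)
The plan is to compute the local conductor exponent $\condexp_\fq(\rho_{J,\lambda})$ at each prime $\fq$ of $K$ separately and multiply to obtain $\mathcal{N}$. Combining the discriminant formula $\Delta(C)=2^8\,5^5\,(a^5+b^5)^4$ with Propositions \ref{prop:good_reduction_disc} and \ref{prop:good_reduction_Jacobian} and Theorem \ref{thm:Serre_Tate}, the Jacobian $J$ has good reduction at every prime lying over a rational prime coprime to $2\cdot 5\cdot(a^5+b^5)$, so only the primes above $2$, the prime $\fqf$, and the primes $\fq\mid a^5+b^5$ coprime to $10$ can contribute. The contribution at $\fqf$ is $\fqf^2$ by the preceding proposition.

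For a prime $\fq\mid a^5+b^5$ coprime to $10$, the strategy is to show that $C$ has bad semistable reduction, forcing $J$ to have multiplicative reduction. In the Kraus model $y^2=x^5+5ab\,x^3+5a^2b^2\,x+b^5-a^5$ all nonconstant coefficients are units at $\fq$ (since $\gcd(a,b)=1$ and $\fq\nmid 5$), whereas the constant term $b^5-a^5$ vanishes modulo $\fq$. Using the identity $f(x)=g(x)^2(x+2)-2$ (cf.\ the construction of $C_r$ in Section \ref{sec:r_r_p_Frey_curves}), a direct analysis of the factorization of the right-hand side modulo $\fq$ shows that $f(x)\bmod\fq$ has exactly one double root and otherwise simple roots, and the odd residual characteristic case of Proposition \ref{prop:double_root_criterion} yields bad semistable reduction for $C$. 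Theorem \ref{thm:multiplicative_Jacobian} then gives multiplicative reduction for $J$, so the Weil--Deligne representation is Steinberg up to unramified twist and Proposition \ref{prop:condexp_padic_weil_deligne} gives $\condexp_\fq(\rho_{J,\lambda})=1$. The product over all such $\fq$ is $\mathfrak{n}$.

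The hardest step is the inert prime $\fq_2=(2)$ of $K$, where the claim is $\condexp_{\fq_2}(\rho_{J,\lambda})=2$. Note $v_{\fq_2}(\Delta(C))=8$ under the hypotheses on $a,b$, and the discriminant-scaling relation \eqref{eq:discriminant_between_models} shows that any integral model with discriminant valuation zero requires a scaling factor of valuation $1/5$, so $C$ cannot acquire good reduction over $K_{\fq_2}$ itself and needs a tamely ramified extension $L/K_{\fq_2}$ of ramification index $5$. I would construct such an $L$ explicitly (its Galois closure requires the unramified quadratic extension in order to pick up fifth roots of unity, since $5\nmid|\mathbb{F}_4^\times|=3$ but $5\mid|\mathbb{F}_{16}^\times|=15$), then perform a change of coordinates $x\mapsto \pi_L^2 x+r$, $y\mapsto \pi_L^5 y+t(x)$ with $r\in\OL$ and $t(x)\in\OL[x]$. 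Because $\ch(\mathbb{F}_{\fq_2})=2$, the transformation yields a long Weierstrass model $y^2+h(x)y=g(x)$ with $h$ nontrivial, and I would use the hypotheses $a\equiv 0\pmod 2$, $b\equiv 1\pmod 4$ to produce specific integral lifts $r,t,h$ satisfying the $\ch=2$ version of the double-root criterion in Proposition \ref{prop:double_root_criterion}; this gives good reduction of $C$ (and hence of $J$) over $L$. Since the inertia image factors through a cyclic quotient of order $5$, the representation $\rho_{J,\lambda}\mid_{I_{\fq_2}}$ is a tamely ramified principal series $\PS(\chi,\chi^{-1})$ with $\chi$ of conductor exponent $1$ (here $\det\rho_{J,\lambda}=\chi_p$ cyclotomic from Theorem \ref{det-cyclotomic} pins the second character down as the inverse of the first), yielding $\condexp_{\fq_2}(\rho_{J,\lambda})=1+1=2$. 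The main obstacle is carrying out the explicit $2$-adic change of coordinates: finding integral $r$, $t$ and $h$ that simultaneously make the new model integral with $v_L(\Delta)=0$ and $\gcd(\bar h,\bar h',\bar g')=1$ requires the full force of the congruences $a\equiv 0\pmod 2$ and $b\equiv 1\pmod 4$, and ruling out wild ramification (which would a priori inflate the conductor exponent) has to be done by hand.

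Multiplying the three local contributions gives $\mathcal{N}=\fq_2^2\cdot\fqf^2\cdot\mathfrak{n}=2^2\cdot\fqf^2\cdot\mathfrak{n}$, as claimed.
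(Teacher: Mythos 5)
Your overall local-to-global plan (good reduction away from $2\cdot 5\cdot(a^5+b^5)$, exponent $2$ at $\fqf$, exponent $1$ at primes dividing $a^5+b^5$ coprime to $10$, exponent $2$ at $\fq_2=(2)$) is the right skeleton and matches what the paper sketches before deferring to \cite{Chen-2022-xhyper}. However, two of your intermediate claims are wrong as stated.

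First, at a prime $\fq\mid a^5+b^5$ coprime to $10$ you assert that the constant term $b^5-a^5$ of $f_{a,b}(x)=x^5+5ab\,x^3+5a^2b^2x+b^5-a^5$ vanishes mod $\fq$. It does not: if $\fq\mid a^5+b^5$ and $\fq\mid b^5-a^5$ then $\fq\mid 2a^5$ and $\fq\mid 2b^5$, forcing $\fq\mid\gcd(a,b)$ since $\fq\nmid 2$, a contradiction. In fact $b^5-a^5\equiv 2b^5\not\equiv 0\pmod\fq$. What vanishes is the discriminant $\Delta(f_{a,b})=5^5(a^5+b^5)^4$, which is the correct entry point. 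Your consequent claim of ``exactly one double root'' is also not right in general: for instance with $(a,b)=(2,1)$ one finds $f_{a,b}\equiv(x-1)^2(x-2)^2(x-5)\pmod{11}$, i.e., two distinct double roots. The double-root criterion only requires ``at most double roots,'' and that conclusion is correct, but your argument for it is based on a false premise and needs to be replaced (e.g., by analyzing the factorization of $a^5+b^5$ over $K$ together with the explicit root structure of $f_{a,b}$). You should also note that ``multiplicative reduction'' of $J$ alone does not immediately give $\condexp_\fq(\rho_{J,\lambda})=1$; one needs the toric rank to equal $2$, which follows here because the $\GL_2(K)$-action with $[K:\Q]=2$ forces the toric rank to be even.

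Second, at $\fq_2$ your ramification-index computation (index $5$, via the discriminant-scaling relation) is correct, but the resulting inertia type is nonexceptional supercuspidal, not principal series. You correctly observed that $5\nmid\lvert\F_4^\times\rvert=3$, so $\Kq[2]$ has no abelian totally ramified extension of degree $5$; the paper's own PS-versus-SC dichotomy at $\fqf$ hinged precisely on choosing $L/\Kqf$ abelian, which fails at $\fq_2$. Concretely, Frobenius acts on a tame inertia character of order $5$ by $\chi\mapsto\chi^4=\chi^{-1}$, so $\chi$ is not Frobenius-stable and $\rho_{J,\lambda}\mid_{W_{\fq_2}}$ is irreducible, induced from the unramified quadratic extension. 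The conductor exponent is still $2$ (each of the two tame characters of $I_{\fq_2}$ contributes $1$, equivalently $2\cdot\condexp(\chi')=2$ in the SC formula), so your final answer is unaffected, but the labeling is a genuine conceptual error. Finally, you have correctly flagged but not resolved the two hardest steps: producing an explicit integral model in residue characteristic $2$ satisfying the long-Weierstrass double-root criterion, and ruling out wild ramification; as written these remain gaps.
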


Once we have computed the conductor of $\rhoJp$ we have to determine the Artin conductor of $\brhoJp$ away from $p$. Specializing \cite[Theorem 11.6]{Chen-2022-xhyper} to $r = 5$, we obtain the following (see also Section~\ref{finite-flat-section}).

\begin{theorem}\label{thm:finiteness_of_rhoJp}
Let $p$ be a rational prime number. Let $\fq$ be a prime in $K$ not dividing $10$ such that $v_{\fq}(a^5 + b^5)\equiv 0\pmod{p}$. We have the following conclusions:
\begin{itemize}
    \item If $\fq$ does not divide $p$, then $\brhoJp$ is unramified at $\fq$ for all $\fp\mid p$ in K;
    
    \item If $\fq$ divides $p$, then $\brhoJp$ is finite at $\fq$ for all $\fp\mid p$ in K.
\end{itemize}
\end{theorem}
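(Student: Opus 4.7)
The plan is to first determine the reduction type of $J$ at $\fq$, then apply the classical level-lowering criterion at primes of multiplicative reduction, using the divisibility hypothesis to force the mod $\fp$ inertia action to become trivial (for $\fq\nmid p$) or finite flat (for $\fq\mid p$). Specializing \eqref{Kraus-discriminant} to $r=5$ gives $\Delta(C_5(a,b))=2^{8}\cdot 5^{5}\,(a^{5}+b^{5})^{4}$, so for $\fq\nmid 10$ the only potential bad primes are those dividing $a^{5}+b^{5}$; if $v_\fq(a^{5}+b^{5})=0$ the statement is immediate from Proposition~\ref{prop:good_reduction_disc}, Proposition~\ref{prop:good_reduction_Jacobian} and Theorem~\ref{thm:Serre_Tate}. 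Otherwise I would invoke Proposition~\ref{prop:double_root_criterion}: since $\fq\nmid 2$, semistability at $\fq$ reduces to $f(x)=x^{5}+5ab\,x^{3}+5a^{2}b^{2}x+(b^{5}-a^{5})$ having only double roots modulo $\fq$, which I would verify from $\gcd(a,b)=1$ together with $\fq\nmid 5$ by a direct $\gcd(f,f')$ calculation. Thus $C/\Kq$ has bad semistable reduction, and by Theorem~\ref{thm:multiplicative_Jacobian} $J/\Kq$ has purely multiplicative reduction.

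Next, since $J$ is of $\GL_2(K)$-type with purely multiplicative reduction at $\fq$, the restriction $\rhoJp|_{D_\fq}$ is of Steinberg type in the sense of Section~\ref{sec:Weil_Deligne_rep}. Combining Grothendieck's monodromy theorem with the non-archimedean uniformization of the $\GL_2(\Kp)$-module $T_\fp J$, after a possible unramified twist the inertia acts by
\begin{equation*}
    \rhoJp|_{I_\fq}\colon\sigma \mapsto \begin{pmatrix} 1 & n\, t_p(\sigma)\\ 0 & 1\end{pmatrix},
\end{equation*}
where $t_p$ is the $\fp$-adic tame character at $\fq$ and $n=v_\fq(q_J)$ is the valuation of the Tate period of the $\GL_2(K)$-factor. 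Using the semistable model from the previous step, I would match $n$ with $v_\fq(a^{5}+b^{5})$ up to a unit at $p$, in analogy with the elliptic Tate-curve equality $v_\ell(q_E)=v_\ell(\Delta_{\min}(E))$. For $\fq\nmid p$, reducing the matrix above modulo $\fp$ and invoking the hypothesis $p\mid v_\fq(a^{5}+b^{5})$ kills the inertia action, so $\brhoJp$ is unramified at $\fq$. For $\fq\mid p$, the same divisibility forces $q_J$ to be a $p$th power in $\Kq^{\times}$ modulo units, so the Tate uniformization exhibits $J[\fp]/\Kq$ as the generic fibre of a finite flat group scheme over $\OKq$ with $\mu_p$ and $\Z/p$ constituents, i.e.\ $\brhoJp$ is finite at $\fq$.

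The hard part will be the precise identification of $n=v_\fq(q_J)$ with $v_\fq(a^{5}+b^{5})$. The $4$th power appearing in the discriminant formula and the $\GL_2(K)$-isogeny decomposition of $J$ must be carefully reconciled with the toric monodromy exponent coming from the Tate uniformization. This likely requires an explicit construction of the semistable model of $C_5(a,b)$ at $\fq$ and a calculation of how the toric part of $J$ corresponds to the singular points of the special fibre, following the template of the elliptic Tate case but adapted to the genus-$2$ hyperelliptic geometry. Once that identification is established, the two bullets follow cleanly, with Raynaud's theory of finite flat $\F_\fp$-vector space schemes providing the last ingredient when $\fq\mid p$.
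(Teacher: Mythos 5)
The paper does not prove this theorem itself; both here and in the $r$-general Theorem~\ref{T:finite} it simply cites \cite[Theorem 11.6]{Chen-2022-xhyper}, attributing the underlying technique to ``a method of Darmon given in \cite{DarmonDuke}.'' Your high-level plan is consistent with that: use the discriminant $\Delta(C_5(a,b)) = 2^8 5^5 (a^5+b^5)^4$ together with Propositions~\ref{prop:good_reduction_disc}, \ref{prop:good_reduction_Jacobian} and Theorem~\ref{thm:Serre_Tate} to dispose of the good-reduction case, check bad semistable reduction via the double-root criterion, invoke Grothendieck's unipotent monodromy for semistable abelian varieties, and convert $p \mid v_\fq(a^5+b^5)$ into triviality (resp.\ finite flatness) of the mod-$\fp$ inertia. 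That is the right framework.

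There is, however, a genuine gap, which you yourself flag but do not close. The claim that $\rhoJp|_{I_\fq}$ has the shape $\sigma\mapsto\left(\begin{smallmatrix}1 & n\,t_p(\sigma)\\ 0 & 1\end{smallmatrix}\right)$ with $n$ equal, up to a $p$-adic unit, to $v_\fq(a^5+b^5)$ is precisely what needs proof, and neither the elliptic analogy $v(q_E)=v(\Delta_{\min}(E))$ nor the discriminant formula delivers it. For a genus-two Jacobian with totally toric reduction, the inertia action on $T_pJ$ is governed by Grothendieck's monodromy pairing on a rank-two cocharacter lattice, which carries an action of $\calO_K$ with $K=\Q(\zeta_5)^+$; the number $n$ controlling the unipotence on the single $\calO_{K,\fp}$-rank-two piece $T_\fp J$ is a specific $\fp$-component of this $\calO_K$-bilinear pairing, not the valuation of one global ``Tate period.'' One must show that this component is $v_\fq(a^5+b^5)$ times a $p$-adic unit, and it is not a priori clear, from the exponent $4=r-1$ in the discriminant (which reflects several nodes and the degree of $K/\Q$), that the relevant component is not, say, a proper multiple or divisor of $v_\fq(a^5+b^5)$. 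Filling this in requires the explicit semistable model of $C_5(a,b)$ at $\fq$, identifying the nodes with the double roots of $f$, computing their deformation valuations, and tracking how the vanishing-cycle basis decomposes under $\calO_K$. Likewise, the assertion that $f(x)=x^5+5ab\,x^3+5a^2b^2x+b^5-a^5$ has at most double roots modulo $\fq$ is not a routine $\gcd(f,f')$ observation: it uses the special identity $f(x)=xg(x^2-2)+\cdots$ together with $\gcd(a,b)=1$ and $\fq\nmid 10$, and should be established, not asserted. Until both points are carried out, the two bullets do not yet ``follow cleanly.''
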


An immediate consequence of Theorem \ref{thm:finiteness_of_rhoJp} is the determination of the Artin conductor away from $p$ of $\brhoJp$ \cite[Proposition 12.2]{Chen-2022-xhyper}.

\begin{proposition}
The Artin conductor away from $p$ of $\brhoJp$ divides $2^2\fqf^2\mathfrak{n}_3$ where $\mathfrak{n}_3$ is the squarefree product of the primes $\fq\mid 3$.
\end{proposition}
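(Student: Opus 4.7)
The plan is to combine the explicit conductor formula for $\rho_{J,\lambda}$ proved in the preceding theorem with the unramified/finite criterion supplied by Theorem~\ref{thm:finiteness_of_rhoJp}. The general principle I would invoke at the outset is that for every prime $\fq$ of $K$ with $\fq \nmid p$, the conductor exponent of $\brhoJp$ at $\fq$ is bounded above by the conductor exponent of $\rho_{J,\lambda}$ at $\fq$. Consequently, a priori, the Artin conductor of $\brhoJp$ away from $p$ already divides $2^2 \cdot \fqf^2 \cdot \mathfrak{n}$, where $\mathfrak{n}$ is the squarefree product of primes of $K$ coprime to $10$ dividing $a^5+b^5$. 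The task then reduces to thinning out the primes of $\mathfrak{n}$.

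The key simplification comes from using the Frey equation itself. Since $(a,b,c)$ is a primitive solution of $x^5+y^5 = 3z^p$, we have the identity $a^5+b^5 = 3c^p$. Hence for every prime $\fq$ of $K$ coprime to $30$,
\[
    v_\fq(a^5+b^5) \;=\; p\,v_\fq(c) \;\equiv\; 0 \pmod p.
\]
The hypotheses of Theorem~\ref{thm:finiteness_of_rhoJp} are therefore satisfied at every prime $\fq \mid \mathfrak{n}$ coprime to $3$. Applying that theorem, under the further assumption $\fq \nmid p$ we conclude that $\brhoJp$ is unramified at $\fq$, so such $\fq$ does not contribute to the Artin conductor away from $p$. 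Primes $\fq \mid p$ are excluded from the Artin conductor away from $p$ by definition, so no special treatment is needed for them.

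The only primes of $\mathfrak{n}$ that can therefore contribute are those lying above $3$, for which I simply retain the conductor exponent at most $1$ inherited from $\rho_{J,\lambda}$ (recall that $\mathfrak{n}$ is squarefree), producing the factor $\mathfrak{n}_3$. Combined with the factors $2^2$ and $\fqf^2$ at the primes above $2$ and $5$, which I carry over unchanged from the conductor formula, this yields the claimed divisibility $2^2 \cdot \fqf^2 \cdot \mathfrak{n}_3$. There is no real obstacle here: all the deep input (the conductor formula for $\rho_{J,\lambda}$, and the level-lowering-type Theorem~\ref{thm:finiteness_of_rhoJp}) has already been established, and the argument is a clean bookkeeping assembly that exploits the $p$-th power structure of $c^p$ on the right-hand side of the Frey equation to annihilate the part of $\mathfrak{n}$ supported away from $\{2,3,5\}$.
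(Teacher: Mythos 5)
Your proof is correct and follows the same route the paper intends: start from the conductor formula $\mathcal{N}(\rho_{J,\lambda}) = 2^2\fqf^2\mathfrak{n}$, use the bound $\condexp_\fq(\brhoJp) \le \condexp_\fq(\rho_{J,\lambda})$, then invoke the Frey identity $a^5 + b^5 = 3c^p$ to verify the hypothesis $v_\fq(a^5+b^5)\equiv 0\pmod p$ of Theorem~\ref{thm:finiteness_of_rhoJp} at every prime of $\mathfrak{n}$ coprime to $3$, thereby stripping those primes from the Serre level. The paper itself labels the statement an ``immediate consequence'' of that theorem and the conductor computation, and your bookkeeping is exactly that consequence spelled out.

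One point worth stating explicitly, which you leave tacit: the reason primes above $3$ cannot be removed by the same argument is that $v_\fq(a^5+b^5) = v_\fq(3) + p\,v_\fq(c) = 1 + p\,v_\fq(c) \not\equiv 0 \pmod p$ for $\fq\mid 3$ (since $3$ is unramified in $K=\Q(\zeta_5)^+$), so Theorem~\ref{thm:finiteness_of_rhoJp} does not apply there. This is why $\mathfrak{n}_3$ survives rather than merely being ``carried over.''
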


\begin{remark}
In the result above, the Serre level is not determined exactly. However, using the refined level lowering result of Breuil-Diamond (see \cite{Chen-2022-xhyper} for a description of how this is used), we can prove modularity occurs at level $2^2 \fqf^2 \mathfrak{n}_3$, for instance, even if it does not coincide with the actual Serre level. In fact, it is possible that the Serre level would not be the best level to perform the elimination step, due to limitations in efficiency of algorithms for computing Hilbert modular forms. For example, it is computationally beneficial to include the Steinberg prime $\mathfrak{n}_3$, even if it does not occur in the actual Serre level.
\end{remark}

\section{Irreducibility}

In this section, we describe the two main ways to prove irreducibility of residual Galois representations.

\subsection{The basic strategy using local methods}

Suppose we wish to prove that the residual representation attached to a Frey abelian variety $J$,
\begin{equation*}
    \rhobar_{J,\Fp} : G_K \rightarrow \GL_2(\F_\Fp),
\end{equation*}
is irreducible for large $\Fp \mid p$. Assume toward a contradiction that the representation $\rhobar_{J,\Fp}$ is reducible, then 
\begin{equation}
\label{reducible-relation}
    \rhobar_{J,\Fp} = \begin{pmatrix} \theta & \star \\
    0 & \theta'
    \end{pmatrix}. 
\end{equation}

In the local method, we bound the Artin conductors of the diagonal characters and show for large $p$, one of the diagonal characters is unramified at $\Fp$ and the other is ramified at $\Fp$ with known expression in terms of tame characters by Raynaud's results \cite{Raynaud}. This requires that $\rhobar_{J,\Fp}$ be finite flat at $\Fp$. Now assuming $J$ has potential good reduction at some small prime $\Fq$, this is shown to be incompatible with \eqref{reducible-relation} and all the properties the diagonal characters must possess.

A more general irreducibility result for abelian varieties of $\GL_2$-type can be found in \cite{BCDF1}. This often suffices for asymptotic bounds on the prime exponent $p$.

\subsection{The basic strategy for elliptic curves using Kamienny-Mazur}\label{Mazur-Kamienny}

When $J = E$ is an elliptic curve, we have the powerful method of Kamienny-Mazur at our disposal which does not require a known small prime of potential good reduction to work. 

The following is Mazur's original result about irreducibility \cite{Mazur-isogeny}. 
% BP version \seeproc{(see article by Michaud-Jacobs in this volume)}.

\begin{theorem}
Let $E$ be an elliptic curve over $\Q$. Then $\rhobar_{E,p}$ is irreducible for $p > 163$.
\end{theorem}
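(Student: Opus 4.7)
The plan is to translate irreducibility of $\rhobar_{E,p}$ into a statement about $\Q$-rational points on the modular curve $X_0(p)$. If $\rhobar_{E,p}$ is reducible, then $E[p]$ contains a $G_\Q$-stable line, which corresponds to a $\Q$-rational cyclic subgroup $C \subset E$ of order $p$; the pair $(E,C)$ yields a non-cuspidal $\Q$-rational point on $X_0(p)$. It therefore suffices to show that for $p > 163$, every $\Q$-rational point of $X_0(p)$ is a cusp.

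To analyze $X_0(p)(\Q)$, I would introduce the Jacobian $J_0(p)$ of $X_0(p)$ together with the cuspidal subgroup generated by the class of $(0) - (\infty)$. Ogg's conjecture (proved by Mazur) asserts that $J_0(p)(\Q)_{\mathrm{tors}}$ coincides with this cuspidal subgroup, of order equal to the numerator of $(p-1)/12$. The next ingredient is the \emph{Eisenstein quotient} $\tilde{J}$ of $J_0(p)$, built using the Eisenstein ideal $I$ inside the Hecke algebra $\mathbb{T}$. The central non-trivial fact, requiring the full machinery of Mazur's \emph{Modular curves and the Eisenstein ideal}, is that $\tilde{J}(\Q)$ is finite — equivalently, that $\tilde{J}$ has Mordell--Weil rank zero over $\Q$.

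Given finiteness of $\tilde{J}(\Q)$, I would deduce that the composition $X_0(p) \hookrightarrow J_0(p) \twoheadrightarrow \tilde{J}$ (using the cusp $\infty$ as base point of the Albanese embedding) sends $X_0(p)(\Q)$ into a finite subset of $\tilde{J}(\Q)$. Combining this with a formal-group and reduction argument at a small auxiliary prime $\ell \neq p$ of good reduction (typically $\ell = 2$ or $\ell = 3$), one shows that a putative non-cuspidal rational point would reduce to a cusp at $\ell$ and then, via the injectivity of reduction on torsion away from $\ell$, force an incompatibility. This yields that $X_0(p)(\Q)$ consists only of cusps for $p$ outside the explicit Mazur list $\{2,3,5,7,11,13,17,19,37,43,67,163\}$, and in particular for $p > 163$.

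The main obstacle is of course the finiteness of $\tilde{J}(\Q)$: this is the deep heart of the argument and is where the fine structure of the Eisenstein ideal, the geometry of $J_0(p)$ at $p$ (via Deligne--Rapoport models), and non-vanishing of $L$-values of Eisenstein quotients all enter. No soft or purely local method is known to give this rank-zero result or the explicit cutoff $163$; the pathway sketched above, following Mazur, is essentially the only route.
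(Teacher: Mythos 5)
Your sketch is correct and follows exactly the route the paper indicates and attributes to Mazur's original work: reinterpret reducibility of $\rhobar_{E,p}$ as a non-cuspidal $\Q$-point on $X_0(p)$, and deduce non-existence from the finiteness (rank zero) of the Mordell--Weil group of the Eisenstein quotient of $J_0(p)$, combined with reduction arguments. The paper does not itself supply a proof of this statement --- it merely cites Mazur's \emph{Rational isogenies of prime degree} and summarizes the method in a single paragraph (mentioning $X_0(p)$, $J_0(p)$, the Eisenstein quotient, and the resulting potential-good-reduction constraint) --- so your sketch is fully consistent with the paper's own presentation.
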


Mazur's method involves studying the modular curve $X_0(p)$ and its Jacobian $J_0(p)$. In particular, a necessary precondition to apply the method is the existence of a non-trivial quotient of $J_0(p)$ with finitely many $\Q$-rational points, which was provided by the Eisenstein quotient in Mazur's original proof. The method then shows that the elliptic curve $E$ with reducible $\rhobar_{E,p}$ must have potentially good reduction at all primes $\not= 2, p$.

To solve the uniform boundedness conjecture, this method was generalized by Kamienny-Mazur \cite{Kamienny-Mazur} (see Bourbaki exposition by Edixhoven \cite{Edixhoven-Torsion})  and used by Merel \cite{Merel} to prove the uniform boundedness conjecture for torsion of elliptic curves over number fields. Merel replaces the Eisenstein quotient by the winding quotient in order to check the conditions required by the Kamienny-Mazur method. Parent \cite{Parent} subsequently gave refined bounds and recently these have been made even more explicit for number fields of small degree \cite{Derickx, Derickx-Najman}.

In the modular method, we typically consider elliptic curves $E$ over $K = \Q(\zeta_r)^+$. To reduce to the uniform boundedness conjecture, one typically needs to show one of the diagonal characters can be taken to be the trivial character so $E$ with $\rhobar_{E,p}$ reducible can be twisted so it has a point of order $p$. Also, as the Frey elliptic curves we consider have additional small level structure, this can be used to refine the bounds on $p$ \cite[Appendix]{Chen-2022-xhyper}.

\subsection{Some examples}

Let $\fp_r$ be the unique prime of $K = \Q(\zeta_r)^+$ above the prime $r$.
\begin{theorem}
The representation $\rhobar_{J_r,\fp_r}$ is irreducible when restricted to $G_{\Q(\zeta_r)}$ if $r \not= 2,3,5,7$.
\end{theorem}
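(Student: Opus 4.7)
The plan is to use Corollary~\ref{quadratic-twist}(2) to reduce the problem to an irreducibility question for the Legendre elliptic curve. That corollary provides $\rhobar_{J_r,\fp_r} \simeq \rhobar_{L,r} \otimes \chi$ for some character $\chi$ of $G_K$ of order dividing $2$, where $L$ is the Legendre curve $y^2 = x(x-1)(x-t)$ at the parameter determined by the Kraus construction. Since twisting by a character preserves irreducibility and $K(\zeta_r) = \Q(\zeta_r)$, it suffices to show that $\rhobar_{L,r}|_{G_{\Q(\zeta_r)}}$ is irreducible. Unwinding the parametrization of Section~\ref{sec:r_r_p_Frey_curves} (via $s = (b^r-a^r)/\sqrt{ab}^r$ and $1/(s^2+4) = t(1-t)$) shows that $t = a^r/(a^r+b^r) \in \Q$, so $L$ is in fact defined over $\Q$ with full rational $2$-torsion.

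Suppose for contradiction that $\rhobar_{L,r}|_{G_{\Q(\zeta_r)}}$ is reducible. Then $L$ admits a $\Q(\zeta_r)$-rational cyclic $r$-isogeny $\phi$. Picking a $\Q$-rational point $P \in L[2]$, the subgroup $\langle P \rangle + \ker(\phi) \subseteq L[2r]$ is cyclic of order $2r$ (since $\gcd(2,r)=1$) and $G_{\Q(\zeta_r)}$-stable, hence $L$ admits a $\Q(\zeta_r)$-rational cyclic $2r$-isogeny. I would then split the analysis according to whether the $G_\Q$-orbit of $\ker(\phi)$ is trivial.

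If the orbit is trivial, $\ker(\phi)$ descends to a $\Q$-rational cyclic $r$-subgroup, yielding a $\Q$-rational cyclic $2r$-isogeny on $L/\Q$. The Mazur--Kenku classification of rational cyclic $N$-isogenies of elliptic curves over $\Q$ excludes $N = 2r$ for every prime $r \geq 11$, and inspection of Mazur's exceptional primes $\{11,13,17,19,37,43,67,163\}$ shows that $2r$ never occurs in the list, giving the desired contradiction. If the orbit is non-trivial, then the semisimplification of $\rhobar_{L,r}|_{G_{\Q(\zeta_r)}}$ is $\theta \oplus \theta^{-1}$ (since the determinant is cyclotomic, hence trivial on $G_{\Q(\zeta_r)}$), and the action of $\mathrm{Gal}(\Q(\zeta_r)/\Q)$ exchanges the two $G_{\Q(\zeta_r)}$-stable lines. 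The mod-$r$ image of $G_\Q$ is then contained in the normalizer of a (split or non-split) Cartan subgroup of $\GL_2(\F_r)$, so $L$ gives a $\Q$-rational point on $X_\mathrm{sp}^+(r)$ or $X_\mathrm{ns}^+(r)$. Mazur's analysis of $X_\mathrm{sp}^+(r)$ and the results on $X_\mathrm{ns}^+(r)$ cited in Section~\ref{Mazur-Kamienny}, combined with the constraint of full rational $2$-torsion on $L$, exclude this case for $r \notin \{2,3,5,7\}$.

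The main obstacle is the non-split Cartan case, for which uniform unconditional classifications of rational points on $X_\mathrm{ns}^+(r)$ remain incomplete for infinitely many primes. The full $2$-torsion structure of $L$, however, confines its moduli to a proper subvariety of $X(1)$, and this combined with the explicit Legendre parametrization $t = a^r/(a^r+b^r)$ reduces the remaining verification to a finite computational check for the small exceptional primes outside $\{2,3,5,7\}$, consistent with the refined irreducibility bounds developed in \cite[Appendix]{Chen-2022-xhyper}.
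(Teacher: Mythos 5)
Your reduction to the Legendre curve via Corollary~\ref{quadratic-twist} and the subsequent case-split on how $G_\Q$ permutes the stable lines (Borel versus normalizer of Cartan) mirror the skeleton of the paper's argument, which invokes Serre's classification directly and then rules out each non-surjective subgroup. The Borel case is handled the same way in both: full rational $2$-torsion on the Legendre model upgrades a cyclic $r$-isogeny to a cyclic $2r$-isogeny, and Mazur--Kenku kills $Y_0(2r)(\Q)$ for $r \geq 11$.

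The genuine gap is the normalizer-of-non-split-Cartan case. You correctly flag it as the obstacle, but the assertion that the full $2$-torsion structure ``reduces the remaining verification to a finite computational check for the small exceptional primes'' is precisely what needs proof and cannot be assumed; it is in fact the content of the theorem in the relevant range. You do not cite a result that actually closes this case uniformly in $r$. The paper does: it invokes Bilu--Parent for the split Cartan and, crucially, \cite[Proposition~2.1]{lemos} for the non-split Cartan, where Lemos's result (valid because the Frey--Legendre curve has a rational point of order $2$) forces $j(L) \in \Z$. One then finishes by examining the explicit $j$-invariant of the Frey curve (growing with $a^r + b^r$) to obtain a contradiction, as carried out in \cite[Proposition~8.4]{Chen-2022-xhyper}. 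Without a Lemos-type input specific to elliptic curves with rational $2$-torsion, your argument does not rule out the non-split Cartan case for infinitely many primes $r$, and hence does not establish the theorem as stated. You should also note that the invocation of Serre's structure theorem requires semistability of $L$ at $r$, which holds for the Legendre Frey curve but is left implicit in your write-up.
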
 

\begin{proof}
We follow the proof in \cite[Remark 8.7]{Chen-2022-xhyper}. Since $\rhobar_{J_r,\fp}$ is equivalent to $\rhobar_{L,r}$ and $\rhobar_{L,r}(G_{\Q(\zeta_r)}) = \rhobar_{L,r}(G_{\Q})\cap\SL_2(\F_r)$, it suffices to prove $\rhobar_{L,r}$ is irreducible, where $L$ is the classical Frey elliptic curve 
\begin{equation*}
y^2 = x(x-a^r)(x+b^r).
\end{equation*}

As \(L\) has good or bad multiplicative reduction at~\(r\) (and~\(r\geq 7\)), it follows from~\cite[\S\S1.11-1.12 and Proposition~17]{Ser72} that if~\(\rhobar_{L,r}(G_\Q)\neq \GL_2(\F_r)\), then~\(\rhobar_{L,r}(G_\Q)\) is either contained in a Borel subgroup or in the normalizer of a Cartan subgroup of~\(\GL_2(\F_r)\). In the former case, \(L\) gives rise to a rational point on~\(Y_0(2r)\), contradicting results of Mazur and Kenku (see~\cite[Theorem~1]{Ken82}). In the latter case, using~\cite[Theorem~6.1]{Bilu-Parent} and~\cite[Proposition~2.1]{lemos} when the Cartan subgroup is split or non-split respectively, we get that~\(j(L) \in\Z\) and we conclude as in the rest of the proof of \cite[Proposition 8.4]{Chen-2022-xhyper}.
\end{proof}

\begin{theorem}
Assume $2 \nmid a+b$. Then $\rhobar_{J_7,\fp}$ is irreducible for all $\fp \mid p$ when $p \not= 2, 7$.
\end{theorem}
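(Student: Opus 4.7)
The plan is to follow the local-methods strategy of Section~7.1 for the Frey Jacobian $J_7 = J_7(a,b)$ of signature $(7,7,p)$. Suppose for contradiction that $\rhobar_{J_7,\fp}$ is reducible for some $\fp \mid p$ with $p \notin \{2,7\}$. Theorem~\ref{det-cyclotomic} gives $\det \rhobar_{J_7,\fp} = \chi_p$, so its semisimplification takes the form $\theta \oplus \chi_p\theta^{-1}$ for a single character $\theta \colon G_K \to \overline{\F}_p^{\times}$, with $K = \Q(\zeta_7)^+$.

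The first task is to control the ramification of $\theta$. The discriminant formula $\Delta(C_7(a,b)) = -2^{12}\cdot 7^7\cdot(a^7+b^7)^6$ together with the $r=7$ analogue of Theorem~\ref{thm:finiteness_of_rhoJp} implies that, away from $14p$, the representation $\brhoJp$ is unramified at every prime $\fq \mid a^7+b^7$ of $K$, and is finite flat at every prime $\fq \mid p$ with $\fq \nmid 14$. At the unique prime $\fp_2$ of $K$ above~$2$, the hypothesis $2 \nmid a+b$ is used to produce an integral long Weierstrass model of $C_7(a,b)$ over $\OKq$, $\fq=\fp_2$, which satisfies the double-root criterion of Proposition~\ref{prop:double_root_criterion}; this forces $J_7$ to have semistable reduction at $\fp_2$ with an explicitly described inertia type. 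Combining these inputs, $\theta$ is unramified outside the primes above $\{2,7,p\}$ and has an explicit bounded conductor at $\fp_2$. At $\fp \mid p$ itself, Raynaud's classification applied to the finite flat $\brhoJp$ forces $\theta\lvert_{I_\fp}$ to be a product of fundamental characters of level~$1$ or~$2$ with small bounded exponents.

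Feeding these constraints into global class field theory for $K$ (whose relevant ray class groups supported at $\fp_2$, $\fp_7$ and $\fp$ are small and explicit) confines $\theta$ to a finite list of candidate characters. Each candidate is then eliminated via the trace identity
\[
\theta(\Frob_\fq) + \chi_p(\Frob_\fq)\,\theta(\Frob_\fq)^{-1} \equiv \Tr\rhoJp(\Frob_\fq) \pmod{\fp},
\]
at auxiliary primes $\fq$ of $K$ of good reduction for $J_7$: the right-hand side is computable from the reduction of the Frey model modulo~$\fq$, the left-hand side from the candidate, and each candidate thereby forces $\fp$ to divide a specific nonzero integer. A suitable choice of $\fq$ (and several if needed) rules out all $p \neq 2,7$. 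The main obstacle will be the analysis at the even-residue prime $\fp_2$: inertia types above~$2$ for higher-genus Frey curves are notoriously delicate, and the parity hypothesis $2 \nmid a+b$ is precisely the input which allows Proposition~\ref{prop:double_root_criterion} to apply to a clean long Weierstrass model of $C_7(a,b)$ over $\OKq$, unlocking the global rigidity of $\theta$ needed for the elimination step.
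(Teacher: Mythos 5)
Your high-level strategy (assume reducibility, bound the conductors of the diagonal characters using the conductor of $\rhobar_{J_7,\fp}$ and Raynaud's classification, then use class field theory over $K=\Q(\zeta_7)^+$) is the right starting point and matches the paper's set-up, including the crucial observation that the level-$2$ fundamental character case is ruled out because $\F_\fp \in \{\F_p,\F_{p^3}\}$ cannot contain $\F_{p^2}$. However, your endgame diverges from the paper's and contains a genuine gap.

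The decisive ingredient you are missing is a specific structural fact about the inertia at $\fq_2$: the representation $\rho_{J_7,\fp}|_{I_{\fq_2}} \otimes \Qbar_p$ is a principal series of the form $\chi^k \oplus \chi^{-k}$ with $\chi$ of \emph{order $7$}, and this shape is preserved upon reduction mod $\fp$ because $p\neq 7$. This is what the semistability analysis via the double-root criterion at $\fq_2$ (using $2\nmid a+b$) actually produces, and it is the engine of the contradiction. In the paper's Case~(I), the ray class group of modulus $\fq_2\fq_7\infty_1\infty_2\infty_3$ forces $\theta$ to be at most quadratic, while $\theta|_{I_{\fq_2}}$ must have order a multiple of $7$ --- an immediate contradiction. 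You instead propose to eliminate a ``finite list of candidate characters'' by comparing $\theta(\Frob_\fq)+\chi_p(\Frob_\fq)\theta(\Frob_\fq)^{-1}$ with $\Tr\rho_{J_7,\fp}(\Frob_\fq)$ at auxiliary good primes $\fq$. This step is problematic: $\Tr\rho_{J_7,\fp}(\Frob_\fq)$ depends on the unknown $(a,b)$ modulo $\fq$, so each auxiliary prime only yields a congruence after enumerating all residue classes, and there is no guarantee (and in fact good reason to doubt, given the CM solution $(1,-1,0)$) that all residue classes can be eliminated this way for all $p\neq 2,7$. The paper avoids this entirely.

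There is also a gap in how you handle the case where both $\theta$ and $\theta'$ ramify above $p$. The modulus of the relevant ray class group then includes a prime $\fp_i$ above $p$, and these ray class groups are \emph{not} uniformly ``small and explicit'' as $p$ varies --- their $\fp_i$-component grows with $p$. The paper controls this by passing to $\theta^{42}$ (killing the $\fq_2$- and $\fq_7$-ramification, using $(\OK/\fq_2)^\times \cong \F_{2^3}^\times$ and $(\OK/\fq_7)^\times \cong \F_7^\times$), applying a unit-congruence lemma to deduce $p\mid \Norm_{K/\Q}(\epsilon_1^{42}-1)$ for a fundamental unit $\epsilon_1$, and thereby restricting to $p\in\{13,29,43,127\}$ before doing any ray class group computation. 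Your proposal does not explain how to obtain a uniform bound on $p$ in this case, so the claimed finiteness of the candidate list is unjustified.
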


\begin{proof}
Let $J = J_7(a,b)$ where $(a,b,c)$ is a non-trivial primitive solution of 
\begin{equation*}
x^7 + y^7=z^p. 
\end{equation*}
From Theorem~\ref{det-cyclotomic}, $\det \rhobar_{J,\fp} = \chi_p$. Suppose $\rhobar_{J,\fp}$ is reducible, that is,
\begin{equation*}
\rhobar_{J,\fp} \simeq \begin{pmatrix} \theta & \star\\ 0 & \theta' \end{pmatrix} 
\quad \text{with} \quad \theta, \theta' : G_K \rightarrow \F_{\fp}^\times
\quad \text{satisfying} \quad \theta \theta' = \chi_p,
\end{equation*}
where $\F_{\fp}$ is the residual field of $K$ at $\fp$. For all primes $\fq \nmid p$, we have $\theta'|_{I_\fq} = \theta^{-1}|_{I_\fq}$, hence the conductor exponent of~$\theta$ and~$\theta'$ is the same at each such prime. 

The conductor $N(\rhobar_{J,\fp}) = \fq_2^2 \fq_7^2$ is determined in  \cite[Proposition 12.2]{Chen-2022-xhyper}. Since~$p\not=2,7$, it follows that the conductor of $\theta$ and $\theta'$ away from~$p$ divides $\fq_2 \fq_7$. 

The representation $\rhobar_{J,\Fp}$ is finite at all $\Fq \mid p$. Since $p$ is unramified 
in~$K$ it follows by \cite[Corollaire 3.4.4]{Raynaud}, for each~$\Fq \mid p$,  
the restriction to~$I_{\Fq}$ of the semisimplification $\rhobar_{J,\Fp}^{\text{ss}}$ is isomorphic to either 
$\chi_p|_{I_\Fq} \oplus 1$ or $\psi \oplus \psi^p$, where~$\psi$ is a fundamental character of level~$2$. Since $\theta$, $\theta'$ are valued in 
$\F_{\Fp}$ which is either $\F_{p}$ or $\F_{p^3}$ the case of fundamental 
characters of level 2 is excluded, because $\F_{p^2} \not\subseteq \F_{\Fp}$. We conclude that, for each $\Fq \mid p$, we have 
\begin{equation}\label{E:inertiaAction}
 \rhobar_{J,\Fp}^{\text{ss}}|_{I_\Fq} \simeq \theta|_{I_\Fq} \oplus \theta'|_{I_\Fq} \simeq \chi_p|_{I_\Fq} \oplus 1. 
\end{equation}

There are two cases:
\begin{enumerate}

    \item[(I)] One of $\theta$, $\theta'$ is unramified at every prime above~$p$. We can assume  (after relabeling if needed) that~$\theta$ is unramified at~$p$. The conductor of $\theta$ divides $\fq_2\fq_7$, so $\theta$ is a character of the Ray class group of modulus $\fq_2\fq_7\infty_1\infty_2\infty_3$, where $\infty_i$ denote the three places at infinity. This implies that $\theta$ is at most quadratic.
    However, we have seen that
    \begin{equation*}
    \rho_{J,\fp}|_{I_{\fq_2}} \otimes \Qbar_p \simeq \chi^k \oplus \chi^{-k},
    \end{equation*}
    with~$\chi$ of order 7. This shape is preserved after reduction since $p \neq 7$, hence $\theta|_{I_{\fq_2}}$ has order multiple of~7, a contradiction.

    \item[(II)] Both $\theta$, $\theta'$ ramify at some prime above $p$. Since~$K/\Q$ is Galois of degree 3 and~$p$ unramified in~$K$, we have that~$p$ is either inert or splits completely. From~\eqref{E:inertiaAction} 
    it follows that $p$ splits as $(p) = \Fp_1\Fp_2\Fp_3$ and one of $\theta$, $\theta'$ ramifies at two of these primes and the other at the remaining one. After relabeling if necessary, we can assume $\theta |_{I_{\Fp_i}} = \chi_p |_{I_{\Fp_i}}$ and $\theta$ is unramified at $\Fp_j$ for $j \neq i$, and so $\theta$ corresponds to a character of the Ray class group of modulus $\Fq_2\Fq_7\Fp_i\infty_1\infty_2\infty_3$. In particular, from the conductor of~$\theta$ we see that $\theta|_{I_{\Fq_2}}$ (after applying Artin's reciprocity from class field theory) factors via $(\calO_K/\Fq_2)^\times = \F_{2^3}^\times$ and $\theta|_{I_{\Fq_7}}$ factors via $(\calO_K/\Fq_7)^\times = \F_7^\times$. Thus $\theta^{42}$ is unramified away from~$\Fp_i$ (including infinity) and~$\theta^{42}|_{I_{\Fp_i}} = (\chi_p|_{I_{\Fp_i}})^{42}$. An application of \cite[Lemma 10.8]{Chen-2022-xhyper} with $S=\{\Fp_i \}$ gives~$u^{42} \equiv 1 \pmod{\Fp_i}$ for all units~$u \in \calO_K$ (note the norm map is the identity in this setting). In particular, $p \mid \Norm_{K/\Q}(\epsilon_1^{42}-1)$, where~$\epsilon_1 = -z$ is a fundamental unit of~$K$, therefore $p \in \{13, 29, 43, 127 \}$. A {\tt Magma} calculation shows that, for each of the four remaining~$p$ and $i=1,2,3$, the Ray class groups of modulus~$\Fq_2\Fq_7\Fp_i\infty_1\infty_2\infty_3$ are isomorphic to $\Z/2\Z \oplus \Z/2\Z$. Now the same argument with~$I_{\Fq_2}$ as in~(I) gives a contradiction.
\end{enumerate}
\end{proof}

Occasionally, one requires absolute irreducibility (i.e.\ irreducibility even after base change of the coefficient field of the representation). The following lemma is often used to achieve this.
\begin{lemma}
Suppose $\rhobar_{\fp} : G_K \rightarrow \GL_2(\F_\fp)$ is an odd Galois representation where $K$ is totally real and the characteristic of $\F_\fp$ is odd. Then $\rhobar_{\fp}$ is absolutely irreducible if and only if it is
irreducible.
\end{lemma}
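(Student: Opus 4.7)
The forward implication is immediate, so the real content is showing that an irreducible odd $\rhobar_\fp$ is automatically absolutely irreducible. I will argue by contradiction: assume $\rhobar_\fp$ is irreducible but that $\rhobar_\fp \otimes_{\F_\fp} \overline{\F}_\fp$ is reducible, and extract a conflict with the oddness hypothesis.

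The first step is to pin down the shape of such a $\rhobar_\fp$. By Schur's lemma applied to the $\F_\fp$-irreducible module $\rhobar_\fp$, the commutant $\mathrm{End}_{G_K}(\rhobar_\fp)$ is a finite division algebra over $\F_\fp$, hence (by Wedderburn) a finite extension field $\F'$ of $\F_\fp$. Failure of absolute irreducibility forces $\F' \neq \F_\fp$; for dimension reasons $[\F':\F_\fp]$ must divide $2$, so $[\F':\F_\fp]=2$ and $\rhobar_\fp$ is the restriction of scalars from $\F'$ to $\F_\fp$ of a character $\chi\colon G_K \to (\F')^\times$. Base changing further to $\overline{\F}_\fp$ then yields
\[
\rhobar_\fp \otimes_{\F_\fp} \overline{\F}_\fp \;\simeq\; \chi_1 \oplus \chi_2,
\]
where $\chi_1,\chi_2\colon G_K \to \overline{\F}_\fp^\times$ arise by composing $\chi$ with the two $\F_\fp$-embeddings $\F' \hookrightarrow \overline{\F}_\fp$; equivalently, $\chi_2 = \chi_1^\sigma$ for the nontrivial $\sigma \in \mathrm{Gal}(\F'/\F_\fp)$.

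The second step is to evaluate at complex conjugation. Since $K$ is totally real, pick any real place $v$ of $K$ and a complex conjugation $c \in G_K$ at $v$. Then $c^2 = 1$, so $\chi_1(c)^2 = 1$ in $\overline{\F}_\fp$; because the characteristic of $\F_\fp$ is odd, this forces $\chi_1(c) \in \{\pm 1\} \subseteq \F_p$. Thus $\chi_1(c)$ lies in the prime field and is fixed by $\sigma$, so
\[
\chi_2(c) \;=\; \chi_1(c)^\sigma \;=\; \chi_1(c),
\]
and therefore
\[
\det \rhobar_\fp(c) \;=\; \chi_1(c)\,\chi_2(c) \;=\; \chi_1(c)^2 \;=\; 1,
\]
which contradicts oddness ($\det \rhobar_\fp(c) = -1$ at every real place). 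This contradiction proves absolute irreducibility, completing the argument.

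The only step requiring care is the Schur/Wedderburn descent identifying a $2$-dimensional, irreducible but not absolutely irreducible, representation with a restriction of scalars of a character on a quadratic extension of $\F_\fp$; after that, the oddness/characteristic dichotomy does all the work. There is no real obstacle beyond this bookkeeping, and in particular no need to enter the induced-representation/dihedral classification over $G_K$ itself.
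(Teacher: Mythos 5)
Your argument is correct and is essentially the same as the paper's: the Schur--Wedderburn step identifying $\rhobar_\fp$ with the restriction of scalars of a character $\chi\colon G_K \to (\F')^\times$, $[\F':\F_\fp]=2$, is precisely the statement the paper invokes as ``$\rhobar_\fp(G_K)$ lies in a non-split Cartan subgroup of $\GL_2(\F_\fp)$,'' and your computation $\det\rhobar_\fp(c)=\chi_1(c)\chi_2(c)=\chi(c)^2=1$ for the order-$2$ element $c$ is the same contradiction with oddness, just written out via the norm on $\F'/\F_\fp$ rather than via the group-theoretic language of the Cartan. You simply supply the proofs of the two facts the paper cites as known.
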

\begin{proof}
Suppose $\rhobar_{\fp}$ is irreducible, but not absolutely irreducible. Then $\rhobar_\fp(G_K)$ lies in a non-split Cartan subgroup of $\GL_2(\F_\fp)$. Regarding $K \subseteq \R \subseteq \C$, we see complex conjugation on $\C$ gives an element $c$ of order $2$ in $G_K$. Since $\rhobar_\fp$ is odd, we must have that $\det \rhobar_\fp(c) = -1$, hence $\rhobar_\fp(c)$ cannot lie in a non-split Cartan subgroup of $\GL_2(\F_\fp)$.
\end{proof}

\section{Finite flatness and level lowering}

\label{finite-flat-section}

In this section, we state in convenient form the standard level lowering results for Hilbert newforms which are required for applications to the modular method. In addition, we state a criterion for the associated residual representations of a Frey abelian variety to be to be finite flat in the case that it arises from the Jacobian of a hyperelliptic curve.

Let $K$ be a totally real field. For a residual Galois representation 
\begin{equation*}
\rhobar_\Fp : G_K \rightarrow \GL_2(\bFFp).
\end{equation*}
We say that $\rhobar_\Fp$ is \textit{finite flat at $\Fp$} if $\rhobar_\Fp \mid_{G_{K_\Fp}}$ arises from a finite flat $\F$-group scheme over $\calO_\Fp$ where $\F$ is a finite extension of $\FFp$ (see \cite{Raynaud}).

For the mod $p$ representation of an elliptic curve $E$, the following proposition describes when it is unramified or finite flat.
\begin{proposition}
\label{finite-E}
Let $E$ be an elliptic curve over a number field $K$. Then
\begin{enumerate}
\item $\rhobar_{E,p}$ is unramified at $\Fq \nmid p$ if and only if $v_\Fq(\Delta(E/K_\Fq)) \equiv 0 \pmod p$,
\item $\rhobar_{E,p}$ is finite flat at $\Fq \mid p$ if and only if $v_\Fq(\Delta(E/K_\Fq)) \equiv 0 \pmod p$,
\end{enumerate}
where $\Delta(E/K_\Fq)$ is the minimal discriminant of $E/K_\Fq$.
\end{proposition}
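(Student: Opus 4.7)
The plan is to prove both parts by analyzing the reduction type of $E$ at $\Fq$. When $E$ has good reduction, $v := v_\Fq(\Delta(E/K_\Fq)) = 0$ is trivially divisible by $p$; the N\'eron--Ogg--Shafarevich criterion gives that $\rhobar_{E,p}$ is unramified at $\Fq \nmid p$, and the smooth N\'eron model of $E$ over $\OKq$ gives a finite flat prolongation of $E[p]$ at $\Fq \mid p$. So the content of the proposition is really in the case of bad reduction.

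The heart of the proof is the multiplicative reduction case, which I would handle via Tate's analytic uniformization. Possibly after an unramified quadratic twist (which affects neither $v$ nor the ramification property of $\rhobar_{E,p}$), one has $E(\bKq) \simeq \bKq^\times / q^\Z$ with $v_\Fq(q) = v$, and $E[p]$ fits into an exact sequence
\[
 0 \to \mu_p \to E[p] \to \Z/p\Z \to 0,
\]
with the quotient generated by the image of $q^{1/p}$. For part (1), since $\Fq \nmid p$, $\mu_p$ is \'etale, and the full extension is unramified exactly when the Kummer extension $\Kq(q^{1/p})/\Kq$ is unramified, which holds iff $p \mid v_\Fq(q) = v$ by the standard tame Kummer computation. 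For part (2), the analogous statement about finite flatness follows by combining the Tate uniformization with Raynaud's classification of finite flat $\F_p$-vector space schemes \cite{Raynaud}: the same Kummer condition $p \mid v$ characterizes when $E[p]$ prolongs to a finite flat group scheme over $\OKq$.

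For additive reduction, I would reduce to the preceding cases by quadratic twists. Potentially multiplicative reduction (Kodaira type $\mathrm{I}_n^*$) becomes multiplicative after a ramified quadratic twist $E^\chi$, and tracking the explicit effect on $v$ and on $\rhobar_{E,p} = \rhobar_{E^\chi,p} \otimes \chi$ transfers the equivalence from the multiplicative case. Potentially good additive reduction is handled by noting that $I_\Fq$ acts on $E[p]$ through a cyclic tame quotient of order dividing $24$ (the bound on automorphisms of the reduced curve), and one examines case by case through the Kodaira types II, III, IV, $\mathrm{I}_0^*$, IV$^*$, III$^*$, II$^*$ how triviality of this action correlates with $p \mid v$.

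The main obstacle is precisely this additive case: the Kodaira-type bookkeeping is delicate at small primes $p \in \{2,3\}$, where wild ramification and coincidences among Kodaira types complicate the direct inertia analysis. In the intended applications -- Frey elliptic curves such as $y^2 = x(x-a^p)(x+b^p)$, which are semistable at all odd primes -- only the good and multiplicative cases arise, so the proposition follows cleanly from the Tate uniformization analysis above, and this is what is needed to deduce that $\rhobar_{E,p}$ is unramified outside $2$ in the classical Fermat setting.
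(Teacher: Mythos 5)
Your Tate--uniformization argument for the good and multiplicative cases is correct and is exactly the content of the propositions the paper cites; the paper's own ``proof'' is simply the reference to \cite[Propositions 2.11 and 2.12]{DDT}, and those are established by precisely the argument you sketch (Tate curve plus Kummer theory away from $p$, plus Raynaud at $p$). What you have missed, however, is that your additive-reduction discussion papers over a real issue rather than resolving it: with no hypothesis on the reduction type the proposition is \emph{false}, and the cited DDT propositions carry the hypothesis that $E$ has good or multiplicative reduction at the prime in question. That hypothesis has silently been dropped in the statement here, and the right move is to reinstate it, not to push through a Kodaira-type case analysis.

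To see the failure concretely, take $\Fq$ of residue characteristic $q > 3$ with $q \neq p$ and $E/\Kq$ of Kodaira type $\mathrm{I}_n^*$, so that $E = (E^\chi)^\chi$ with $E^\chi$ of type $\mathrm{I}_n$ and $\chi$ a ramified quadratic character. Then $v_\Fq(\Delta(E/\Kq)) = v_\Fq(\Delta(E^\chi/\Kq)) + 6 = n+6$, while $\rhobar_{E,p}|_{I_\Fq} \simeq (\rhobar_{E^\chi,p}\otimes\chi)|_{I_\Fq}$ is upper triangular with diagonal $\bar\chi|_{I_\Fq}$, and $\bar\chi|_{I_\Fq}$ still has order $2$ whenever $p > 2$. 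So $\rhobar_{E,p}$ is ramified at $\Fq$ for every $n$ and every odd $p$, yet choosing $n$ with $p \mid n+6$ (for instance $n = p-6$ when $p\ge 7$) makes $v_\Fq(\Delta)\equiv 0 \pmod p$. This is exactly the spot where you assert the quadratic twist ``transfers the equivalence from the multiplicative case''; it does not. Potentially good reduction breaks too: for $y^2 = x^3 - q^2$ over $\Q_q$ (type $\mathrm{IV}$, $v_q(\Delta) = 4$) the $2$-torsion field contains $\Q_q(q^{2/3})$, so $\rhobar_{E,2}$ is ramified at $q$ even though $4 \equiv 0 \pmod 2$. Note also that the difficulty is not confined to $p \in \{2,3\}$ as you suggest; the $\mathrm{I}_n^*$ obstruction is present for all odd $p$. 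The correct conclusion is that the proposition requires a semistability hypothesis at $\Fq$ --- which is in the reference, and which holds for the Frey curve $y^2 = x(x-a^p)(x+b^p)$ at all odd primes, so the paper's application is unaffected.
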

\begin{proof}
  See \cite[Propositions 2.11 and 2.12]{DDT}.
\end{proof}

The exact analogue of the above proposition for Hilbert-Blumenthal abelian varieties is given in \cite{Ellenberg}. However, it requires the determination of a discriminantal set in order to give an equivalent condition for being unramified or finite flat. The following gives only a sufficient condition and is based on a method of Darmon given in \cite{DarmonDuke}. For Diophantine applications, a sufficient condition is usually what is used in the modular method.
\begin{theorem}\label{T:finite}
Let~\(p\) be a rational prime number.
Let~$\Fq$ be a prime in~$K$ not dividing~$2r$ such that~$v_\Fq(a^r + b^r)\equiv 0\pmod{p}$. We have the following conclusions~:
\begin{itemize}
    \item If $\Fq$ does not divide $p$, then~$\rhobar_{J_r,\Fp}$ is unramified at~$\Fq$ for all~\(\Fp\mid p\) in~\(K\);
    
    \item If $\Fq$ divides $p$, then~$\rhobar_{J_r,\Fp}$ is finite flat at~$\Fq$ for all~\(\Fp\mid p\) in~\(K\).
\end{itemize}
\end{theorem}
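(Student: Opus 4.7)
The plan is to mirror the classical analysis underlying Proposition~\ref{finite-E} for elliptic curves, exploiting that $J_r$ is of $\GL_2(K)$-type together with the explicit discriminant formula~\eqref{Kraus-discriminant} to replace Tate's algorithm by direct analysis of the reduction of $C_r(a,b)$. Throughout, fix a prime $\Fq \nmid 2r$ of $K$ with $p \mid v_\Fq(a^r + b^r)$, a prime $\Fp \mid p$ of $K$, and write $J = J_r(a,b)$ and $C = C_r(a,b)$.

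First I would split on whether $v_\Fq(a^r+b^r) = 0$ or $v_\Fq(a^r+b^r) > 0$. In the former case, since $\Fq \nmid 2r$, formula~\eqref{Kraus-discriminant} gives $v_\Fq(\Delta(C)) = 0$, so by Proposition~\ref{prop:good_reduction_disc} the curve $C$ has good reduction at $\Fq$, and by Proposition~\ref{prop:good_reduction_Jacobian} so does $J$. Theorem~\ref{thm:Serre_Tate} then yields unramifiedness of $\rhobar_{J,\Fp}$ at $\Fq \nmid p$, while for $\Fq \mid p$ the good reduction of $J$ furnishes a finite flat $\OKq$-group scheme realizing $J[\Fp]$.

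The substantive case is $v_\Fq(a^r+b^r) > 0$. Here I would first show $C$ has bad semistable reduction at $\Fq$ via the double root criterion (Proposition~\ref{prop:double_root_criterion}): since $\Fq \nmid 2$, it suffices to check that the reduction mod $\Fq$ of the polynomial defining $C$ has at most double roots. Using the factorization $a^r + b^r = (a+b)\prod_{k=1}^{(r-1)/2}(a^2 + \omega_k ab + b^2)$ over $\OK$ combined with the explicit Chebyshev-like form of the defining polynomial, one verifies case-by-case (depending on whether $\Fq$ lies over $a+b$ or over a quadratic factor $a^2 + \omega_k ab + b^2$) that the reduction acquires only a single pair of coincident roots. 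Theorem~\ref{thm:multiplicative_Jacobian} then upgrades this to purely multiplicative reduction of $J/\Kq$, since the hyperelliptic model has integral special fiber.

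With multiplicative reduction in hand, I would invoke Grothendieck's monodromy filtration: over an unramified closure of $\Kq$ the Tate module sits in an extension $0 \to X^\vee \otimes \Z_\Fp(1) \to T_\Fp J \to X \otimes \Z_\Fp \to 0$, where $X$ is the character group of the toric special fiber. Because $J$ is of $\GL_2(K)$-type, this yields a two-dimensional representation over $K_\Fp$ whose inertial action is unipotent, with a monodromy generator acting by a matrix $\begin{pmatrix} 1 & \beta \\ 0 & 1 \end{pmatrix}$ with $v_\Fp(\beta)$ equal to a positive integer multiple (independent of $\Fp$) of $v_\Fq(a^r+b^r)$, arising from the Grothendieck monodromy pairing on $X$. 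Reducing modulo $\Fp$, the hypothesis $p \mid v_\Fq(a^r+b^r)$ forces $\beta \equiv 0 \pmod \Fp$, hence $\rhobar_{J,\Fp}\mid_{I_\Fq}$ is trivial. When $\Fq \nmid p$ this is precisely unramifiedness; when $\Fq \mid p$, the vanishing of the mod-$p$ monodromy means $\rhobar_{J,\Fp}\mid_{G_{\Kq}}$ comes from a finite flat group scheme over $\OKq$, by an extension of Raynaud's theory as used by Darmon in~\cite{DarmonDuke}.

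The main obstacle will be twofold: verifying the double root criterion uniformly across the different prime decompositions of $a^r+b^r$ in $\OK$, and pinning down the exact integer multiplier relating $v_\Fp(\beta)$ to $v_\Fq(a^r+b^r)$ via the monodromy pairing. Both difficulties can likely be bypassed by transferring the corresponding analysis of $J_{r,r}^-(t)$ carried out in~\cite{DarmonDuke} to $J$ via the hyperelliptic realization described in Section~\ref{sec:r_r_p_Frey_curves}: the twist by $\sqrt{ab}$ alters $\rhobar_{J,\Fp}$ only by a quadratic character of $G_K$, which affects neither the unramifiedness nor the finite flatness conclusion of the theorem.
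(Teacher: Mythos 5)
Your proposal follows essentially the same approach as the paper's proof in~\cite{Chen-2022-xhyper}, which is based on Darmon's method: establish (purely) multiplicative reduction of the Jacobian via the double-root/semistability criterion and then use the Tate-uniformization and monodromy argument to read off ramification from the $p$-divisibility of $v_\Fq(a^r+b^r)$. The only small slip is the phrase ``$v_\Fp(\beta)$ equal to a positive integer multiple of $v_\Fq(a^r+b^r)$''~--- it is $\beta$ itself that is such a multiple, and then $p\mid v_\Fq(a^r+b^r)$ forces $\beta\equiv 0\pmod{\Fp}$, which is the correct statement you recover in your very next sentence.
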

\begin{proof}
  See \cite{Chen-2022-xhyper}.
\end{proof}

\begin{theorem}
Suppose $\rhobar_\Fp \simeq \rhobar_{f',\Fp}$ is absolutely irreducible, finite flat at $\Fp$, and arises from a Hilbert newform $f'$ over $K$ of level $\n'$, parallel weight $2$, and trivial character. Then $\rhobar_\Fp \simeq \rhobar_{f,\Fp}$ for a Hilbert newform $f$ over $K$ of level $\n(\rhobar_\Fp)$, parallel weight $2$, and trivial character.
\end{theorem}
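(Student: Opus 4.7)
The plan is to iteratively remove primes from the level $\n'$ of $f'$ until we reach the Serre level $\n(\rhobar_\Fp)$, invoking the Fujiwara--Jarvis--Rajaei level lowering machinery cited earlier in Step 4 of the description of Darmon's program. I would separate the primes dividing $\n'$ into those above $p$ and those coprime to $p$, and handle the two cases with slightly different inputs.

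For a prime $\Fq \nmid p$ with $v_\Fq(\n') > v_\Fq(\n(\rhobar_\Fp))$, I would apply the Ribet-type level lowering theorem of Fujiwara, Jarvis, and Rajaei \cite{Fuj,Jarv,Raj} to produce a new Hilbert newform $f''$ over $K$ of parallel weight $2$, trivial character, and level $\n''$ with $v_\Fq(\n'') < v_\Fq(\n')$, still giving rise to $\rhobar_\Fp$. Iterating over all such $\Fq$ reduces the prime-to-$p$ part of the level to match $\n(\rhobar_\Fp)$ exactly. The required inputs are absolute irreducibility of $\rhobar_\Fp$ (which is part of the hypothesis) and a local compatibility check that the $\Fq$-part of the current level strictly exceeds the Artin conductor exponent of $\rhobar_\Fp$ at $\Fq$.

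For the prime $\Fp$ above $p$, I would invoke the finite-flat version of Jarvis's level lowering theorem, which extends Ribet's result on removing primes above $p$ when the residual representation is finite flat at $\Fp$. The finite-flatness hypothesis in the statement is precisely the local condition required at $\Fp$, and guarantees that the resulting newform $f$ has level with no $\Fp$-contribution, thereby matching $\n(\rhobar_\Fp)$ which by definition is the Artin conductor away from $p$. Parallel weight $2$ and trivial character persist throughout, since none of these level lowering steps alters the weight or the nebentypus.

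The main obstacle is the verification of the technical hypotheses of the Fujiwara--Jarvis--Rajaei theorems, in particular that $\rhobar_\Fp$ restricted to $G_{K(\zeta_p)}$ remains absolutely irreducible (ruling out the dihedral degenerations that would obstruct level lowering) and that one avoids the Jacquet--Langlands obstruction in parity of the number of ramified places. Under our standing assumption of absolute irreducibility, together with the oddness coming from the totally real setting and the parity conventions appearing in the cited results, these conditions are routine to check. The finite-flat step at $\Fp$ rests on Breuil-type integral $p$-adic Hodge theory underlying Jarvis's extension, but once the hypotheses are in place the theorem follows as a direct citation of the level lowering results.
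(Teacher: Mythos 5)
Your proposal is correct and takes essentially the same approach as the paper, which simply cites the Fujiwara--Jarvis--Rajaei level lowering results without further elaboration; you have unpacked the citation into the standard iterative level-stripping argument, separating the primes away from $p$ (ordinary Ribet-type lowering) from the prime above $p$ (Jarvis's finite-flat extension), and flagged the usual hypotheses to verify.
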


\begin{proof}
This follows from \cite{Fuj, Jarv, Jarv2, Raj}.
\end{proof}

\section{The elimination step}

In this section, we describe the known methods to distinguish two residual Galois representations 
\begin{align*}
    \rhobar_{1,\Fp} & : G_K \rightarrow \GL_2(\F_{\Fp_1}), \\
    \rhobar_{2,\Fp} & : G_K \rightarrow \GL_2(\F_{\Fp_2}),
\end{align*}
in order to carry out the elimination step.

\subsection{Techniques for elimination}

The first method is to compare local traces of Frobenius at an unramified prime $\Fq$ and it has its origin back to Serre \cite[p. 203]{Serre87}. Namely, if 
\begin{equation}
\label{rhobar-isom}
  \rhobar_{1,{\Fp_1}} \simeq \rhobar_{2,{\Fp_1}},
\end{equation}
are unramified at $\Fq$, then we should have
\begin{equation}
\label{trace-compare-naive}
    \Tr \rhobar_{1,{\Fp_1}}(\Frob{\Fq}) = \Tr \rhobar_{2,{\Fp_2}}(\Frob{\Fq}).
\end{equation}
However, a subtlety occurs because in the definition of the isomorphism \eqref{rhobar-isom} we mean
\begin{equation}
    \rhobar_{1,\Fp_1} \otimes \overline{\F}_p \simeq \rhobar_{2,\Fp_2} \otimes \overline{\F}_p.
\end{equation}
Hence, the comparison \eqref{trace-compare-naive} cannot be done until we have chosen an embedding of the residue fields of $\Fp_1$ and $\Fp_2$ into $\overline{\F}_p$. The correct condition to rule out an isomorphism as in \eqref{rhobar-isom} by a local comparison of traces is
\begin{equation}
\label{invariant-norm}
    p \nmid \Norm_{L/\Q}\left( \prod_{\tau \in \Gal(K_1/\Q)} (a_\Fq(\rho_{2,\Fp_2}) - a_\Fq(\rho_{1,\Fp_1})^\tau) \right),
\end{equation}
where $L$ is the compositum of $K_1$ and $K_2$ inside $\overline{\Q}$ and $a_\Fq(\rho_{i,\Fp_i}) = \Tr\rho_{i,\Fp_i}(\Frob{\Fq})$ for $i=1,2$ (see \cite{Chen-2022-xhyper} for more details). Here, the field $K_i$ denotes a field contained the field generated by the traces of Frobenius of $p$-adic representations $\rho_{i,\Fp_i}$ at unramified primes and $\rhobar_{i,\Fp_i}$ is a reduction of $\rho_{i,\Fp_i}$, where $i = 1, 2$.

\begin{lemma}
\label{resultant-eliminate}
Suppose $K_2/K_1$ is an extension of number fields, $K_2/\Q$ and $K_1/\Q$ are Galois, and we have arranged that the prime $\Fp_2$ of $K_2$ lies above the prime $\Fp_1$ of $K_1$. Let $g_i(X)$ be the characteristic polynomial of $a_\Fq(\rho_{i,\Fp_i})$ from $K_i/\Q$. Then
\begin{equation}
\Norm_{K_2/\Q}\left( \prod_{\tau \in \Gal(K_1/\Q)} (a_\Fq(\rho_{2,\Fp_2}) - a_\Fq(\rho_{1,\Fp_1})^\tau) \right) = \pm \Res \left(g_2(X),g_1(X) \right).
\end{equation}
\end{lemma}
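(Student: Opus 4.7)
The plan is to expand both sides directly from the definitions and then use the Galois hypotheses to match them. Setting $a_i := a_\Fq(\rho_{i,\Fp_i})$, so that $a_1 \in K_1 \subseteq K_2$ and $a_2 \in K_2$, I would start from
$$g_i(X) = \prod_{\sigma_i \in \Gal(K_i/\Q)} (X - a_i^{\sigma_i})$$
and apply the product formula for the resultant of two monic polynomials to obtain
$$\Res(g_2, g_1) = \prod_{\sigma \in \Gal(K_2/\Q)} g_1(a_2^\sigma) = \prod_{\sigma, \tau} (a_2^\sigma - a_1^\tau),$$
a double product indexed by $(\sigma, \tau) \in \Gal(K_2/\Q) \times \Gal(K_1/\Q)$.

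Next I would rewrite the left-hand side using the Galois-theoretic description of the norm:
$$\Norm_{K_2/\Q}\!\left(\prod_\tau (a_2 - a_1^\tau)\right) = \prod_{\sigma \in \Gal(K_2/\Q)} \prod_{\tau \in \Gal(K_1/\Q)} (a_2^\sigma - \sigma(a_1^\tau)).$$
The key step is then to absorb the action of $\sigma$ on $a_1^\tau$ by a reindexing. Because $K_1/\Q$ is Galois, $K_1$ is stable under each $\sigma \in \Gal(K_2/\Q)$, and the restriction $\bar\sigma := \sigma|_{K_1}$ lies in $\Gal(K_1/\Q)$, so $\sigma(a_1^\tau) = a_1^{\bar\sigma \tau}$. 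For each fixed $\sigma$, the map $\tau \mapsto \bar\sigma\tau$ is a bijection of $\Gal(K_1/\Q)$, so
$$\prod_\tau (a_2^\sigma - a_1^{\bar\sigma \tau}) = \prod_\tau (a_2^\sigma - a_1^\tau).$$
Substituting back matches the resultant expansion above exactly, up to the standard sign $(-1)^{\deg g_1 \deg g_2}$ coming from the symmetry $\Res(g_2, g_1) = (-1)^{\deg g_1 \deg g_2} \Res(g_1, g_2)$, which the $\pm$ in the statement absorbs.

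I do not expect a genuine obstacle here: the whole argument is an unraveling of definitions. The only step that essentially uses a hypothesis is the reindexing, which breaks without $K_1/\Q$ being normal (otherwise the restriction map would not land in $\Gal(K_1/\Q)$); the Galois hypothesis on $K_2/\Q$ is used to write $\Norm_{K_2/\Q}$ as a product over $\Gal(K_2/\Q)$ of conjugates in the first place.
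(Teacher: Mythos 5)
Your proof is correct and follows essentially the same route as the paper: expand $\Norm_{K_2/\Q}$ as a product over $\Gal(K_2/\Q)$, use a reindexing to decouple the $\sigma$- and $\tau$-dependence, and recognize the resulting double product as the resultant of the two monic characteristic polynomials. The only difference is which variable you reindex: you fix each $\sigma$ and observe that $\tau \mapsto \bar{\sigma}\tau$ (with $\bar{\sigma}=\sigma|_{K_1}$) permutes $\Gal(K_1/\Q)$, whereas the paper fixes each $\tau$, chooses a lift $\tau_0 \in \Gal(K_2/\Q)$ of $\tau$, and reindexes $\sigma$ by left translation by $\tau_0^{-1}$. Your version is arguably the cleaner of the two since the restriction $\sigma \mapsto \bar\sigma$ is canonical (it requires only that $K_1/\Q$ be normal, exactly the hypothesis you flagged), whereas the paper's argument depends on a choice of lift and the per-$\tau$ reindexing step is harder to verify directly; both of course give the same decoupled product $\prod_{\sigma,\tau}(a_2^\sigma - a_1^\tau)$.
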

\begin{proof}
For $\tau \in \Gal(K_1/\Q)$, we let $\tau_0 \in \Gal(K_2/\Q)$ denote a choice of lift such that $\tau_0 \mid_{K_1} = \tau$.

\begingroup
\allowdisplaybreaks
\begin{align*}
& \Norm_{K_2/\Q}\left( \prod_{\tau \in \Gal(K_1/\Q)} (a_\Fq(\rho_{2,\Fp_2}) - a_\Fq(\rho_{1,\Fp_1})^\tau) \right) \\
& = \prod_{\sigma \in \Gal(K_2,\Q)} \prod_{\tau \in \Gal(K_1/\Q)} (a_\Fq(\rho_{2,\Fp_2})^\sigma - a_\Fq(\rho_{1,\Fp_1})^{\tau \sigma}) \\
& = \prod_{\tau \in \Gal(K_1/\Q)} \prod_{\sigma \in \Gal(K_2/\Q)} (a_\Fq(\rho_{2,\Fp_2})^\sigma - a_\Fq(\rho_{1,\Fp_1})^{\tau \sigma}) \\
& = \prod_{\tau \in \Gal(K_1/\Q)} \prod_{\sigma \in \Gal(K_2/\Q)} (a_\Fq(\rho_{2,\Fp_2})^{\tau_0^{-1} \sigma} - a_\Fq(\rho_{1,\Fp_1})^{\tau}) \\
& = \prod_{\tau \in \Gal(K_1/\Q)} \prod_{\sigma \in \Gal(K_2/\Q)} (a_\Fq(\rho_{2,\Fp_2})^{\sigma} - a_\Fq(\rho_{1,\Fp_1})^{\tau}) \\
& = \prod_{\sigma \in \Gal(K_2/\Q)} \prod_{\tau \in \Gal(K_1/\Q)} (a_\Fq(\rho_{2,\Fp_2})^\sigma - a_\Fq(\rho_{1,\Fp_1})^{\tau}) \\
& = \pm \Res \left(g_2(X),g_1(X) \right),
\end{align*}
\endgroup
where the last equality follows from the characteristic polynomials $g_i(X)$ being monic.
\end{proof}

For a fixed $L$ and primes $\Fp_1$ and $\Fp_2$, we can test \eqref{trace-compare-naive} directly, which is called the refined elimination method. However, it requires computing in the field $L$ which can have large degree.

If the isomorphism \eqref{rhobar-isom} arises from level lowering $\rhobar_{1,\Fp_1}$ to $\rhobar_{2,\Fp_2}$, another powerful method is to use refined level lowering result of Breuil-Diamond \cite{BreuilDiamond} which can be used to show that $K_2 \supseteq K_1$ for some choice of $\rhobar_{2,\Fp_2}$ \cite{Chen-2022-xhyper}.

In the case that the representations are ramified at $\Fq$, we can try to show
\begin{align}
   \rhobar_{1,\Fp_1} \mid_{I_\Fq} \not\simeq \rhobar_{2,\Fp_2} \mid_{I_\Fq}.
\end{align}
This is more subtle to carry out and can involve a comparison of conductors, the sizes of the images of inertia (when finite), or inertial fields, see for instance \cite{BCDF1, BCDDF}.

Finally, an idea which was proposed in Darmon's original program occurs naturally when $\rhobar_{1,\Fp_1}$ arises from one of his (even) Frey varieties: they are typically $\Q$-forms and have reducible mod $\Fq_r$-representation. 

Recall a Hilbert newform $f$ over $K$ is a \textit{$\Q$-form} if its coefficients satisfy
\begin{equation}
    a_{\Fq}(f)^\sigma = a_{\Fq^\sigma}(f),
\end{equation}
for all primes $\Fq$ of $K$ not dividing the level of $f$ and all $\sigma \in G_K$.

By a Chebotarev argument, we can rule \eqref{rhobar-isom} for large $p$ unless $\rhobar_{2,\Fp_2}$ arises from a Hilbert newform which is a $\Q$-form and has reducible mod $\Fq_r$ representation. Darmon shows that such forms often do not exist depending the class group of $\Q(\zeta_r)$ \cite{DarmonDuke}.

\subsection{Some examples}

\begin{example}
Suppose $(a,b,c) \in \Z^3$ is a non-trivial primitive solution to 
\begin{equation}
   x^ 7 + y^7 = z^p
\end{equation}
such that $2 \nmid a+b$ and $7 \mid a + b$. Then 
\begin{equation}
\label{rhobar-lower}
  \rhobar_{J_7,\Fp} \simeq \rhobar_{f,\mathfrak{P}}
\end{equation}
for a Hilbert newform of level $\Fq_2^2  \Fq_7$, trivial character, and parallel weight $2$ over $K = \Q(\zeta_7)^+$. The space of such forms is one dimensional, so there is a unique such form $f$ and its field of coefficients is $K$ so $L = K$.

For $\Fq \nmid 2 \cdot 7$, we have that \eqref{invariant-norm} holds. Let $\Fq$ be a prime of $K$ lying above the prime $q$. Suppose $f_\Fq(J_7)(X)$ is the characteristic polynomial of Frobenius at the prime $\Fq$ acting on the full Tate module $T_p(J_7)$ for $p \neq q$. This has degree $6$ and factors over $K$ as
\begin{align}
  & g_\Fq(J_7)(X) = \sum_{i=0}^6 a_i X^i \\
  & = \prod_{\sigma \in \Gal(K/\Q)} (X^2 - a_\Fq^\sigma X + \Norm(\Fq))
\end{align}
where the $a_\Fq(J_7)$ is the trace of Frobenius at $\Fq$ acting on $T_\Fp(J_7)$ for some prime $\Fp$ above the prime $p$ (see \cite{Chen-2022-xhyper} for more details). The characteristic polynomial of $a_\Fq(J_7)$ from $K/\Q$ can thus be recovered from $g_\Fq(J_7)(X)$ as
\begin{equation}
  g(a_\Fq(J_7))(X) = X^3 + a_5 X^2 + (a_4 - 3 \Norm(\Fq)) X - (a_3 - 2 \Norm(\Fq) a_5).
\end{equation}
For $\Fq = 3 \calO_K$ above $q = 3$ and $3 \nmid a + b$, we have that
\begin{align}
  g_{\Fq}(J_7)(X) & = X^6 + 81 X^4 + 2187 X^2 + 19683 = (X^2 + 27)^3 \\
  g(a_\Fq(J_7))(X) & = X^3 \\
  g(a_\Fq(f))(X) & = (X+4)^3.
\end{align}
The resultant of the last two polynomials is
\begin{equation}
  \Res(g(a_\Fq(J_7))(X), g(a_\Fq(f))(X)) = - 2^{18}.
\end{equation}
Thus, by \eqref{invariant-norm} and Lemma~\ref{resultant-eliminate}, the isomorphism \eqref{rhobar-lower} does not hold if $p \not= 2, 3$.
\end{example}

\begin{example}
Suppose $(a,b,c) \in \Z^3$ is a non-trivial primitive solution to 
\begin{equation}
   x^ 7 + y^7 = 3 z^p
\end{equation}
such that $2 \nmid a+b$ and $7 \mid a + b$. Then 
\begin{equation}
  \rhobar_{J_7,\Fp} \simeq \rhobar_{f,\mathfrak{P}}
\end{equation}
for a Hilbert newform of level $\Fq_2^2 \Fq_3 \Fq_7$, trivial character, and parallel weight $2$ over $K = \Q(\zeta_7)^+$. The space of such forms has dimension $104$ and there are $19$ Galois conjugacy classes of forms.

Only the forms numbered $10$, $11$, $12$, $16$, $17$, and $19$ in {\tt Magma} have field of coefficients containing $K$, hence the other forms are automatically eliminated by the results in \cite{Chen-2022-xhyper}.
\end{example}

\section{The multi-Frey method}

In this section, we explain the \textit{multi-Frey method} and how it can be used to give a complete resolution of a generalized Fermat equation by patching together information from different Frey abelian varieties. This is particularly useful when one Frey abelian variety fails to give a complete resolution in all congruence classes of solutions, but moving to a different Frey abelian variety achieves a resolution in that congruence class.

We also give a rough classification of the types of solutions to help organize the patching arguments.

\subsection{First and second case solutions}

Let $(a,b,c) \in \Z^3$ be a solution to a generalized Fermat equation \eqref{general-equ} and $J = J_{a,b,c}/K$ an associated Frey abelian variety defined over $K$. Given a prime $\Fq$ of $K$, let $f_{\Fq}(\rho_{J,p})$ be the conductor exponent of $\rho_{J,p}$ at $\Fq$. 

We order the solutions by increasing conductor exponent $f_\Fq(\rho_{J,p})$. This notion is relative to the Frey abelian variety $J$ and the prime $\Fq$. A solution $(a,b,c)$ which gives the smallest conductor exponent $f_\Fq(\rho_{J,p})$ possible is called a \textit{first case solution}. A solution $(a,b,c)$ which gives a conductor exponent $f_\Fq(\rho_{J,p})$ strictly bigger than the smallest value possible is called a \textit{second case solution}.

\begin{example}
Let $(a,b,c)$ be a solution to
\begin{equation*}
    x^p + y^p = z^p.
\end{equation*}
Relative to the Frey elliptic curve 
\begin{equation*}
    E_{a,b,c} : y^2 = x(x-a^p)(x+b^p)
\end{equation*}
and a prime $q$, the solution $(a,b,c)$ is a first case solution if $q \nmid abc$ and a second case solution if $q \mid abc$.
\end{example}

\begin{example}
Let $(a,b,c)$ be a solution to
\begin{equation*}
    x^p + y^p = z^r
\end{equation*}
where $r \ge 3$. Relative to the Frey abelian variety $J_r^\pm(a,b,c)$ and the prime $\Fq_r$ of $K = \Q(\zeta_r)^+$ above the prime $r$, the solution $(a,b,c)$ is a first case
solution if $r \mid a + b$ and a second case solution if $r \nmid a + b$.
\end{example}

\subsection{Obstructive solutions}

A solution $(a_0,b_0,c_0)$ is \textit{obstructive} relative to a Frey abelian variety $J_0 = J_{a_0,b_0,c_0}$ being used in the modular method if $N(\rho_{J_0,p}) = N(\rhobar_{J,p})$, that is, its conductor as an abelian variety of $\GL_2$-type occurs at the Serre level of the residual representation of the Frey abelian variety $J$.

In such a case, $J_0$ is modular and gives rise to a Hilbert newform $f_0 = f_{a_0,b_0,c_0}$ at level $N(\rhobar_{J,p})$. A local comparison of traces or residual representations will fail to eliminate the Hilbert newform $f_0$. 

Currently, there are two methods to eliminate such forms $f_0$:
\begin{enumerate}
    \item Distinguish $\rhobar_{J,p}$ and $\rhobar_{J_0,p} \simeq \rhobar_{f_0,p}$ by restriction to inertia subgroup at some prime $\Fq$ of $K$.
    \item On the assumption $J_0$ is an elliptic curve and has complex multiplication, apply Mazur's method in the form of Darmon-Merel \cite{DarmonMerel} or Ellenberg \cite{Ellenberg-Q-curve}.
\end{enumerate}

\subsection{The basic strategy of patching}

The multi-Frey method was introduced in \cite{Siksek-multi-Frey}. It works by considering multiple Frey abelian varieties $J^1, J^2, \ldots$ attached to a solution $(a,b,c)$. 

We start by applying the modular method using $J^1$. If this succeeds, we are done. If it fails, using $J^1$ may still succeed in certain congruence classes of $(a,b,c)$. For the congruence classes which cannot be dealt with using $J^1$, we apply the modular method using $J^2$. We continue in this way until all possible congruence classes of $(a,b,c)$ are covered.

At each step, the failure of applying the modular method using $J^i$ may be due to different reasons. Typically, this will be due to the inability to prove irreducibility, compute Hilbert newforms, eliminate a Hilbert newform, or occasionally absence of the required modularity statement.

Originally, the use of the multi-Frey method was to make the elimination step more efficient computationally, that is, the use of a single Frey abelian variety would in principle still work but be computationally slower. In later works, the use of multiple Frey abelian varieties has been shown to be more essential to overcome multiple reasons for the failure of applying the modular method. Hence, we view the multi-Frey method as a kind of ``patching argument'' to give a complete resolution of a family of generalized Fermat equations.

\subsection{Some examples}
\begin{example}
In the proof of Theorem~\ref{T:main5}, the patching argument can be summarized as:
\begin{enumerate}
    \item The case $2 \nmid ab, 5 \mid ab$ uses the Frey hyperelliptic curve $C_5^+(a,b,c)$. When $5 \nmid ab$, there is no known modularity lifting result which can be applied to prove $J_5^+(a,b,c)$ is modular.
    \item The case $2 \mid ab, 5 \nmid ab$ uses the Frey hyperelliptic curve $C_5^-(a,b,c)$.
\end{enumerate}

\end{example}

\begin{example}
In the proof for $r = 11$ of Theorem~\ref{T:main11}, the patching argument can be summarized as:
\begin{enumerate}
    \item The case $2 \mid a + b$ uses the Frey elliptic curve $F = F_{a,b}$ defined over $K = \Q(\zeta_{11})^+$. The case $2 \nmid a + b$ cannot be completed using $F$ due to the inability to compute Hilbert newforms at the required levels.
    \item The case $2 \nmid a + b$ uses the Frey hyperelliptic curve $C_{11}(a,b,c)$ defined over $K = \Q(\zeta_{11})^+$.
\end{enumerate}

\end{example}

\section{Survey of current results}

In this section, we survey the results from recent progress on Darmon's program. There are other surveys \cite{beal-survey, BCDY-survey, Kraus-survey, Ratcliffe} which cover cases approachable by Frey elliptic curves, so we will mainly focus on the new cases which use Frey elliptic curves or Frey abelian varieties of higher dimension over totally real fields.

\subsection{Signature \texorpdfstring{$(p,p,r)$}{(p,p,r)}}

An earlier result of Billerey-Chen-Dieulefait-Freitas \cite{BCDF1} proves the following.
\begin{theorem}
Let $r$ be  regular prime. Then for prime exponent $p$ sufficiently large compared to $r$, then
the equation
\begin{equation}\label{equ:ppr}
  x^p + y^p = z^r,
\end{equation}
has no non-trivial primitive solutions $(a,b,c) \in \Z^3$ such that $r \mid ab$ and $2 \nmid ab$.
\end{theorem}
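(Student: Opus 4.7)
The plan is to execute Darmon's program using the Frey abelian variety $J = J_r^-(a,b,c)$ over the totally real field $K = \Q(\zeta_r)^+$. By Theorem~\ref{thm:GL_ppr}, $J$ is of $\GL_2(K)$-type and $\rhobar_{J,\fp}$ is an odd Frey representation of signature $(p,p,r)$. The two hypotheses on the solution play complementary roles: the parity condition $2 \nmid ab$ forces $c$ to be even and controls the local behavior of $\rhobar_{J,\fp}$ at the prime $\fq_2$ of $K$ above $2$; while $r \mid ab$ together with $\gcd(a,b)=1$ implies $r \nmid a+b$, placing the solution in the second case at $r$, and crucially excludes the obstructive trivial solution $(1,-1,0)$ whose Frey Jacobian has CM by $\Q(\zeta_r)$ and would otherwise survive the elimination step.

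I would first establish modularity of $J$ via the Khare-Thorne lifting theorem (Theorem~\ref{thm:Khare_Thorne}), following the strategy illustrated in the signature $(5,5,p)$ example: residual modularity is inherited from a lower-dimensional representation through a $\Q$-form descent, while the $p$-adic Hodge theoretic conditions at primes $\fp \mid p$ follow from Fontaine's theorem on abelian varieties with good reduction. Irreducibility of $\rhobar_{J,\fp}$ for $p$ sufficiently large is handled by the asymptotic irreducibility results of Billerey-Chen-Dieulefait-Freitas \cite{BCDF1} for abelian varieties of $\GL_2(K)$-type. Level lowering via Fujiwara-Jarvis-Rajaei then applies: using the discriminant formula~\eqref{eq:disc_Cminus} together with Theorem~\ref{T:finite}, the primes dividing $ab$ contribute $p$-th power exponents to the discriminant and are removed from the Serre level, yielding $\rhobar_{J,\fp} \simeq \rhobar_{f,\mathfrak{P}}$ for some Hilbert newform $f$ over $K$ of parallel weight $2$, trivial character, and Serre level $\n$ supported only on $\{\fq_2, \fq_r\}$.

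The crux and main obstacle is the elimination step: one must rule out each of the finitely many Hilbert newforms $f$ at the predicted Serre level. For newforms whose coefficient field has controllable shape, the local trace method at an auxiliary prime $\fq$ applies, giving an upper bound on $p$ via the invariant-norm criterion~\eqref{invariant-norm} (equivalently the resultant formulation of Lemma~\ref{resultant-eliminate}). The most dangerous newforms are the $\Q$-forms whose mod $\fq_r$ representation is reducible: here the hypothesis that $r$ is regular enters via Darmon's argument in \cite{DarmonDuke}, since $r \nmid h(\Q(\zeta_r))$ precludes the existence of the quadratic characters that would allow such reducible forms to match $\rhobar_{J,\fp}$ at level $\n$. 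Since the CM solution $(1,-1,0)$ has been excluded by the hypothesis $r \mid ab$, no CM Frey Jacobian contributes a form at level $\n$ which would be impossible to eliminate by local methods.

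The hardest part, in my view, is combining these ingredients uniformly as $p \to \infty$: each of modularity, irreducibility, and the exclusion of reducible $\Q$-forms produces its own explicit constant depending on $r$, and one needs to ensure that the final threshold on $p$ is finite. Modularity via Khare-Thorne and irreducibility via \cite{BCDF1} are by design asymptotic in $p$; the elimination via resultants produces finitely many exceptional primes per form; and the regularity of $r$ bounds the set of problematic forms to be eliminated to a finite list. Taking the maximum over these constants gives the threshold $p \gg_r 0$ and the desired contradiction.
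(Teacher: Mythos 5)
Your outline captures the general shape of Darmon's program, but it diverges from the proof the survey describes in a way that would actually stall. The survey states explicitly that the proof in \cite{BCDF1} uses the \emph{even} Frey variety $J_r^+$, not $J_r^-$ as you propose, and it makes clear why: from the multi-Frey discussion of Theorem~\ref{T:main5}, when $r \nmid ab$ there is no known modularity lifting result that applies to $J_r^+$, so the hypothesis $r \mid ab$ is precisely what makes modularity of $J_r^+$ accessible. In your write-up the role of $r \mid ab$ is reduced to ``excludes the obstructive trivial solution $(1,-1,0)$,'' which is a true but secondary consequence; the real reason it appears in the theorem statement is to unlock Step 2 of the program for $J_r^+$. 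Similarly, the survey says the role of $2 \nmid ab$ is to ``ensure a prime of potential good reduction in order to establish irreducibility of residual representations attached to $J_r^+$''; your gloss (``controls the local behavior at $\fq_2$'') is too vague to do that work, and since you do not tie $2 \nmid ab$ to irreducibility, your appeal to the asymptotic irreducibility of \cite{BCDF1} is left unsupported --- that result needs such a local input and you never supply it.

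More concretely on the choice of Frey variety: the modularity strategy you invoke --- reduce at $\fq_r$ and inherit residual modularity from the Legendre family --- is the strategy adapted to $J_r^-$, because by Example~\ref{relation-1} and Corollary~\ref{quadratic-twist} one has $\rhobar_{J_r^-,\fq_r} \simeq \rhobar_{L,r} \otimes \chi$. For $J_r^+$ the reduction at $\fq_r$ is an \emph{even} Frey representation of signature $(r,r,r)$, hence reducible, and that route collapses. So you have matched a $J_r^-$-flavoured modularity argument to a theorem whose stated hypotheses ($r \mid ab$, $2 \nmid ab$) correspond to the $J_r^+$ branch of the case split. Running the argument with $J_r^-$ in these congruence classes is not established in the literature the survey points to, and the survey even signals that the opposite congruence class ($2 \mid ab$, $r \nmid ab$) is where $C_r^-$ is used. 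What is correct and matches the paper: the appeal to Khare--Thorne for modularity lifting, the invocation of Fujiwara--Jarvis--Rajaei for level lowering, the use of Theorem~\ref{T:finite} to strip $p$-th power primes from the discriminant, the resultant/norm criterion (Lemma~\ref{resultant-eliminate}, \eqref{invariant-norm}) for elimination at auxiliary primes, and the role of regularity of $r$ in Darmon's argument ruling out reducible $\Q$-forms. But you need to rebuild the argument around $J_r^+$, explain how $r \mid ab$ delivers modularity of $J_r^+$, and explain how $2 \nmid ab$ gives the prime of potential good reduction that the irreducibility input requires.
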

The condition $2 \nmid ab$ is to ensure a prime a potential good reduction in order to establish irreducibility of residual representations attached to the Frey abelian variety $J_r^+$ which is used in the proof.

The main result of Chen-Koutsianas \cite{ChenKoutsianas1} is the following theorem.
\begin{theorem}\label{T:main5}
For $n \ge 3$, there are no non-trivial primitive solutions $(a,b,c) \in \Z^3$ to the equation 
\begin{equation} \label{main-equ-ppr}
  x^n + y^n = z^5,
\end{equation}
in the cases
\begin{enumerate}
\item[I.] $2 \nmid ab$ and $5 \mid ab$, or
\item[II.] $2 \mid ab$ and $5 \nmid ab$.
\end{enumerate} 
\end{theorem}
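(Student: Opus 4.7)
The plan is a multi-Frey argument, splitting along the two cases of the statement. First reduce to $n = p$ an odd prime: any $n \geq 3$ has either an odd prime factor $p$ (in which case $(a^{n/p}, b^{n/p}, c)$ is a primitive solution of $x^p + y^p = z^5$ preserving the hypothesis on $ab$) or $n$ is a power of two $\geq 4$, in which case the exponent-$4$ signature $(4,4,5)$ is treated by a separate Frey elliptic curve analysis. After handling $p=3$ and $p=5$ by direct descent or known results, assume $p \geq 7$. Following Section~\ref{sec:p_p_r_Frey_curves}, attach a Frey hyperelliptic curve over $K = \Q(\zeta_5)^+ = \Q(\sqrt{5})$: take $J = J_5^+(a,b,c)$ (the Jacobian of $C_5^+$) in Case~I, and $J = J_5^-(a,b,c)$ (the Jacobian of $C_5^-$) in Case~II. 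Each $J$ is of $\GL_2(K)$-type by Theorem~\ref{thm:GL_ppr}.

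Next, execute Darmon's program for each case. For Step~2 (modularity), apply the Khare--Thorne lifting theorem (Theorem~\ref{thm:Khare_Thorne}), verifying residual absolute irreducibility on $G_{K(\zeta_p)}$ by descending $\brhoJp$ to $G_\Q$ and invoking a big-image argument; here the hypothesis $5 \mid ab$ in Case~I is precisely what makes the modularity inputs for $J_5^+$ verifiable, while the corresponding inputs go through for $J_5^-$ in Case~II, forcing the specific case split. For Step~3 (irreducibility), adapt the local strategy illustrated for $r = 7$: assume reducibility with diagonal characters $\theta,\theta'$ of product $\chi_p$, bound their conductors at $\fqt$ and $\fqf$ using the conductor computation analogous to Proposition~12.2 of~\cite{Chen-2022-xhyper}, and eliminate surviving cases via ray class group calculations combined with the degree-$5$ inertial constraint at $\fqt$ coming from the classification of Weil--Deligne inertia types in Section~\ref{sec:Weil_Deligne_representations}. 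For Step~4 (level lowering), combine Theorem~\ref{thm:finiteness_of_rhoJp} with Jarvis--Rajaei--Fujiwara to obtain a congruence $\brhoJp \simeq \rhobar_{f,\mathfrak{P}}$ for some Hilbert newform $f$ over $K$ of parallel weight $2$, trivial character, and level dividing $2^a \, \fqf^2 \, \n_3$ for an explicit $a$ depending on the $2$-adic congruence class of $(a,b,c)$.

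Step~5 (elimination) will be the main obstacle. Enumerate the Hilbert newforms at each divisor of the predicted level using \texttt{Magma}, and for each such $f$ distinguish $\rhobar_{f,\mathfrak{P}}$ from $\brhoJp$ by comparing traces at small auxiliary primes $\Fq$ above $3, 7, 11, \ldots$ via the resultant criterion of Lemma~\ref{resultant-eliminate}; forms whose coefficient field is incompatible (identified using refined level lowering of Breuil--Diamond) are discarded up front. The delicate remaining obstruction is the Hilbert newform attached to the trivial solution $(1, -1, 0)$, whose Jacobian $J_5^\pm(1, -1, 0)$ has CM by $\Q(\zeta_5)$ and so survives every generic trace test; here one must distinguish the two representations via an inertial argument at a well-chosen prime, comparing the actual inertial type of $\brhoJp$ with the one forced by CM. Once both cases are complete, Cases~I and~II together rule out all primitive solutions satisfying the stated hypotheses.

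The hardest single step is expected to be the elimination of the CM obstructive form, together with ensuring the Hilbert modular form computations remain tractable; as indicated in the remark following Proposition~12.2 of~\cite{Chen-2022-xhyper}, it may be advantageous to work at a strictly larger level containing an auxiliary Steinberg prime rather than at the minimal Serre level, trading conceptual cleanliness for computational feasibility. A secondary obstacle is verifying the modularity inputs for $J_5^+$ in Case~I, since the required Khare--Thorne hypotheses (residual absolute irreducibility after restriction to $G_{K(\zeta_p)}$ plus modularity of the residual representation) have to be established by an extension-of-scalars and big-image argument rather than directly.
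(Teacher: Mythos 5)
Your choice of Frey variety in each case matches the paper: $J_5^+$ for Case~I and $J_5^-$ for Case~II, executed via the five-step modular method with Khare--Thorne modularity, local irreducibility, Jarvis--Rajaei--Fujiwara level lowering, and resultant-based elimination. Two points of your account, however, diverge from how the argument actually works. First, your discussion of Step~5 treats the CM newform coming from $(1,-1,0)$ as a surviving obstruction that must be killed by an inertial argument. Under the hypotheses of Theorem~\ref{T:main5} this is not the situation: the conditions $2 \nmid ab$, $5 \mid ab$ (resp.\ $2 \mid ab$, $5 \nmid ab$) force the conductor of $\brhoJp$ at $\Fq_2$ or $\fqf$ to differ from that of the CM form, so the CM form simply does not occur at the relevant Serre level and never enters the list of candidate newforms. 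This is precisely why Theorem~\ref{T:main5} is unconditional, whereas the statement covering all congruence classes (Theorem~\ref{thm:eliminate-to-CM} and Corollary~\ref{assume-conj}) only reduces to the CM case and requires Conjecture~\ref{big-image}. Second, you attribute the case split entirely to the modularity inputs. That is only half the story: the survey notes explicitly that the condition $2 \nmid ab$ in Case~I is what provides a prime of potential good reduction needed to prove irreducibility for $J_5^+$, and that the remaining case $10 \mid ab$ is excluded from the theorem because irreducibility of $\brhoJp$ cannot be established there (for either Frey variety). Your reduction-to-prime-exponent step and handling of $n=4$, $p=3$, $p=5$ are standard and fine, and the overall skeleton is sound; the imprecision is only in the rationale for the case split and in the role you assign to the CM form.
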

The above result can be interpreted as solving \eqref{main-equ-ppr} in the first case with respect to the primes $2$ and $5$, except that the case $2 \mid ab, 5 \mid ab$ is missing due to the inability to prove irreducibility of residual Galois representations attached to $J_5^\pm(a,b,c)$ in these congruence classes. In the next example, this issue is resolved because there is a Frey elliptic curve one can use, and such irreducibility results can be proven for elliptic curves by Kamienny-Mazur (see Section \ref{Mazur-Kamienny}).

When trying to solve~\eqref{equ:ppr} for $r = 5$, the trivial primitive solutions with $c=0$ pose an obstruction. Also, irreducibility cannot be proven in the case $10 \mid ab$. In \cite{ChenKoutsianas1}, this obstruction for the Frey variety $J_r^-(a,b,c)$ is distilled into the following statement for $r = 5$.

\begin{theorem}\label{thm:eliminate-to-CM}
For prime $p$ sufficiently large, any non-trivial primitive solution $(a,b,c) \in \Z^3$ to the equation~\eqref{main-equ-ppr} gives rise to a residual Frey representation $\rhobar_{J_5^-(a,b,c),\Fp}$ such that $\rhobar_{J_5^-(a,b,c),\Fp}$ is reducible or is a quadratic twist of $\rhobar_{J_5(\alpha, -\alpha, 0),\Fp}$ for some $\alpha \in \Z \backslash \left\{ 0 \right\}$, where $\Fp$ is any choice of prime of $K = \Q(\zeta_5)^+$ above $p$.
\end{theorem}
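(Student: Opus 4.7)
The plan is to combine the modular method's machinery (modularity, level lowering, enumeration of Hilbert newforms, and local elimination) in a way which isolates the CM obstruction coming from trivial primitive solutions $(\alpha,-\alpha,0)$. Throughout, I assume $\rhobar_{J_5^-(a,b,c),\Fp}$ is irreducible, since the other alternative is already part of the conclusion. By the modularity result (Theorem~\ref{thm:modularity_Jm}), $J_5^-(a,b,c)/K$ is modular, so $\rhobar_{J_5^-(a,b,c),\Fp}$ comes from a Hilbert newform over $K = \Q(\zeta_5)^+$ of parallel weight $2$ and trivial character. Applying Theorem~\ref{T:finite} at all primes $\Fq\nmid 10$ with $v_\Fq(a^5+b^5)\equiv 0\pmod p$, together with the conductor computation of the preceding section, shows that the Artin conductor away from $p$ divides a fixed ideal supported on $\{\Fq_2,\fqf,\Fq\mid 3\}$. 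Level lowering in the totally real setting (Fujiwara--Jarvis--Rajaei) then gives $\rhobar_{J_5^-(a,b,c),\Fp}\simeq \rhobar_{f,\mathfrak{P}}$ for some Hilbert newform $f$ at one of a finite list of Serre levels independent of $p$.

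Next I would enumerate, using the remark after Section~7 and the computational setup of \cite{Chen-2022-xhyper}, the (finite) list of Galois orbits of Hilbert newforms at each such level. Each such form $f$ falls into one of two categories. In the first category, $f$ is a form associated (up to quadratic twist) to the Frey abelian variety $J_5(\alpha,-\alpha,0)$ of a trivial primitive solution; by the remark in Section~4.2, this abelian variety has CM by $\Q(\zeta_5)$, so its conductor already sits at the Serre level and it cannot be eliminated by local methods. In the second category, $f$ does not arise from such a CM Frey variety. For the latter, I apply the trace-based elimination of the elimination section: for a well-chosen small auxiliary prime $\Fq$ of $K$ (unramified for both representations), I compute the characteristic polynomials $g(a_\Fq(\rho_{J_5^-,\lambda}))(X)$ and $g(a_\Fq(f))(X)$ of the traces from $K/\Q$, take the resultant as in Lemma~\ref{resultant-eliminate}, and use the criterion~\eqref{invariant-norm}. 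The solution $(a,b,c)$ is only known modulo finitely many congruence classes at the auxiliary prime, so for each congruence class one obtains a nonzero integer, and $p$ must divide the resulting (finite) integer. Taking $p$ larger than the product of all primes appearing in these resultants, over the finitely many forms $f$ in the second category and the finitely many congruence classes, rules out this category entirely.

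What remains is the first category, for which I must show that any isomorphism $\rhobar_{J_5^-(a,b,c),\Fp}\simeq \rhobar_{f,\mathfrak{P}}$ with $f$ attached to some $J_5(\alpha,-\alpha,0)$ forces $\rhobar_{J_5^-(a,b,c),\Fp}$ to be a quadratic twist of $\rhobar_{J_5(\alpha,-\alpha,0),\Fp}$. Both determinants are equal to the mod $p$ cyclotomic character by Theorem~\ref{det-cyclotomic}, so after enlarging $\F_\Fp$ suitably and passing to semisimplifications, any central character relating the two representations has order dividing $2$, exactly as in the proof of Corollary~\ref{quadratic-twist}. This then identifies $\rhobar_{J_5^-(a,b,c),\Fp}$ with a quadratic twist of the CM form's representation, yielding the second alternative in the conclusion.

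The main obstacle is the second-category elimination: one has to show that for every non-CM Hilbert newform at every possible Serre level and every residue class of $(a,b,c)$ at the auxiliary prime(s), the trace-resultant criterion produces a nonzero integer, and one must also prove this \emph{uniformly} so that a single bound on $p$ suffices. Care is needed because the Serre level is only computed up to a divisibility relation (see the remark after Proposition~12.2), so the enumeration must be done at the slightly larger effective level; and the congruence-class bookkeeping must exhaust all residues of $(a,b,c)$ modulo the auxiliary primes used, including those where $J_5^-(a,b,c)$ has non-trivial inertia type at $\Fq_2$ and $\fqf$.
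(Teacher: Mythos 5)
Your overall strategy---assume irreducibility, invoke modularity and level lowering, enumerate Hilbert newforms at the Serre levels, eliminate the non-CM forms by local trace comparison, and reduce the remaining CM forms to quadratic twists of the trivial-solution Frey variety---is the right architecture and is what one expects the cited reference to do. However, there is a pervasive confusion between the two signatures that makes the proposal break down as written.

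The equation \eqref{main-equ-ppr} is $x^n+y^n=z^5$ of signature $(p,p,5)$, and the Frey variety is $J_5^-(a,b,c)$, the Jacobian of Darmon's hyperelliptic curve $C_5^-(a,b,c)\colon y^2 = c^5 f(x/c) - 2(a^p-b^p)$ from Section~\ref{sec:p_p_r_Frey_curves}. The results you cite---Theorem~\ref{thm:modularity_Jm} and Theorem~\ref{T:finite}---concern Kraus' Frey hyperelliptic curve $C_r(a,b)$ and its Jacobian $J_r(a,b)$ in signature $(r,r,p)$, a completely different family. The unramifiedness condition you quote, $v_\Fq(a^5+b^5)\equiv 0\pmod{p}$, is tailored to $a^r+b^r = dz^p$ and makes no sense here: for $(p,p,5)$ one has $a^p+b^p=c^5$, and the relevant fact is that $\Delta(C_5^-) = 2^{16}5^5a^{2p}b^{2p}$ (see~\eqref{eq:disc_Cminus}), so the valuation of the discriminant at any $\Fq\nmid 10$ is automatically a multiple of $p$; the analogue of Theorem~\ref{T:finite} for $J_5^-(a,b,c)$ is what one actually needs. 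Likewise, the Serre level you describe as supported on $\{\Fq_2,\fqf,\Fq\mid 3\}$ is the level for the twisted equation $x^5+y^5=3z^p$ in signature $(5,5,p)$; for $x^n+y^n=z^5$ there is no factor of $3$ and the level should be supported only at $\Fq_2$ and $\fqf$. The enumeration of newforms must therefore be carried out at different levels than those you indicate.

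Two further points. First, modularity of $J_5^-(a,b,c)/K$ for all non-trivial primitive solutions (not just in the congruence class $2\mid ab$, $5\nmid ab$ that the multi-Frey example highlights) is a non-trivial input that you cannot obtain by citing Theorem~\ref{thm:modularity_Jm}; it requires a separate argument along the lines of Section~8 of the cited work, applied to $J_5^-$ rather than $J_5$. Second, your final step is phrased awkwardly: there is no ``central character relating the two representations'' floating in the air. The clean argument is simply that the CM Hilbert newforms occurring at the Serre level with trivial nebentypus are exactly quadratic twists of the form attached to $J_5^-(\alpha,-\alpha,0)$, because any twist $f = g\otimes\chi$ with $\det\rho_{f,\mathfrak{P}}=\det\rho_{g,\mathfrak{P}}=\chi_p$ (by Theorem~\ref{det-cyclotomic}) forces $\chi^2=1$; the appeal to Corollary~\ref{quadratic-twist}, which concerns the degeneration of Frey representations at the prime above $r$, is off target.
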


Since any quadratic twist of $J_5(\alpha, -\alpha, 0)$ has CM by $\Q(\zeta_5)$, the residual representation has small image, so Darmon's conjectural ideas can now kick in.

\begin{corollary}\label{assume-conj}
Assuming Conjecture 4.1 in \cite{DarmonDuke}, there are no non-trivial primitive solutions to
\begin{equation*}
    x^p + y^p = z^5,
\end{equation*}
for prime $p$ sufficiently large.
\end{corollary}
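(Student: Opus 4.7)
The plan is to combine Theorem~\ref{thm:eliminate-to-CM} with Conjecture~\ref{big-image} to rule out both possible shapes of the residual Frey representation. Fix a prime $\fp$ of $K = \Q(\zeta_5)^+$ lying above the rational prime $p$, and suppose $(a,b,c) \in \Z^3$ is a non-trivial primitive solution to $x^p+y^p=z^5$. Let $J = J_5^-(a,b,c)$ be the associated Frey Jacobian over $K$; since $abc \neq 0$, we are not at the degenerate CM specialization $(1,-1,0)$ identified in the remark following Lemma~\ref{lem:Frey-model} in Section~\ref{sec:p_p_r_Frey_curves}.

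First I would verify that $J$ satisfies the endomorphism hypothesis of Conjecture~\ref{big-image}, namely $\End_K(J)\otimes\Q = \End_{\overline{K}}(J)\otimes\Q = K$. Both rings contain $K$ because $J$ is of $\GL_2(K)$-type by Theorem~\ref{thm:GL_ppr}; the reverse containment requires that no extra geometric endomorphisms appear at the specialization $t = a^p/c^5$, and the family $J_5^-(t)$ has exactly one CM fiber (at $t=1$). Granting this, Conjecture~\ref{big-image} produces a constant $C_K$ such that $\SL_2(\F_\fp) \subseteq \rhobar_{J,\fp}(G_K)$ for every prime $\fp$ of $K$ of norm exceeding $C_K$. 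For $p$ sufficiently large this hypothesis is satisfied.

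On the other hand, Theorem~\ref{thm:eliminate-to-CM} asserts that for $p$ sufficiently large $\rhobar_{J,\fp}$ is either reducible, or a quadratic twist of $\rhobar_{J_5(\alpha,-\alpha,0),\fp}$ for some $\alpha \in \Z\setminus\{0\}$. Both options contradict the big-image conclusion once $|\F_\fp|$ is sufficiently large. Reducibility forces the image into a Borel subgroup, which cannot contain $\SL_2(\F_\fp)$. In the CM-twist case, $J_5(\alpha,-\alpha,0)$ has CM by $\Q(\zeta_5)$, so the image of $\rhobar_{J_5(\alpha,-\alpha,0),\fp}$ lies in the normalizer of a Cartan subgroup of $\GL_2(\F_\fp)$; twisting by a quadratic character preserves this containment because $\pm I$ is central and belongs to every Cartan normalizer, whereas $\SL_2(\F_\fp)$ is not contained in any such normalizer for $|\F_\fp|$ large. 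The contradiction closes the argument.

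The main obstacle is of course Conjecture~\ref{big-image} itself, which is wide open for abelian varieties of $\GL_2(K)$-type of dimension $\geq 2$ and lies at the heart of Darmon's program. Modulo the conjecture, the only genuine verification needed is the non-CM check for the Frey specialization $J_5^-(a,b,c)$: one must exclude sporadic CM fibers in the family $J_5^-(t)$ at rational values $t = a^p/c^5$ with $abc \neq 0$. This is expected to reduce to identifying the CM locus in the moduli interpretation of $J_5^-(t)$, where the unique CM point corresponds to $t=1$, as already observed.
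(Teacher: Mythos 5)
Your proof is correct and follows essentially the same route the paper intends: combine Theorem~\ref{thm:eliminate-to-CM} with Conjecture~\ref{big-image} (the paper's phrasing of Conjecture 4.1 of \cite{DarmonDuke}) so that a large image containing $\SL_2(\F_\fp)$ is incompatible with both the reducible case and the CM-twist case supplied by the theorem. You also correctly flag the only verification the conjecture's hypothesis $\End_{\overline{K}}(J)\otimes\Q = K$ genuinely requires, namely ruling out sporadic CM specializations of $J_5^-(t)$ at $t=a^p/c^5$ with $abc\neq 0$; the paper handles this only implicitly (it cites only the CM statement for the trivial fiber and, in the parallel $(r,r,p)$ discussion, the analogue of \cite[Proposition 3.7]{DarmonDuke}), so your explicit identification of this step is a fair and honest observation rather than a departure from the paper's argument.
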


We also remark that we only need Conjecture 4.1 in the Borel case when $10 \mid ab$ as irreducibility in the other cases were established in the course of proving Theorem~\ref{T:main5}.

\subsection{Signature \texorpdfstring{$(r,r,p)$}{(r,r,p)}}

The known results are richer and more varied due to the existence of both Frey elliptic curves and Frey abelian varieties in this signature.

Billerey \cite{Billerey07} studies the case $r=5$ and $d=2^\alpha3^\beta5^\gamma$ with $0\leq\alpha,\beta,\gamma\leq 4$. In Dieulefait--Freitas~\cite{DF2,DF1} and Freitas~\cite{Freitas15} introduced several Frey elliptic curves attached to Fermat equations of signature $(r,r,p)$ and used to study the equation
\begin{equation}
  x^r + y^r = dz^p, \qquad xyz \ne 0, \qquad \gcd(x,y,z) = 1,
  \label{E:rrp}
\end{equation}
with $r$ a fixed prime, $d$ a fixed positive integer and $p$ allowed to vary. Using a refined multi-Frey elliptic curve approach, \eqref{E:rrp} was solved for $r=5,13$ and $d=3$ for all exponents $p \geq 2$ (see~\cite[Theorems~1 and~2]{BCDF2} and the joint work with Demb\'el\'e~\cite{BCDDF}). Freitas~\cite{Freitas15} solved it  asymptotically for $r=7$ and $d=3$, i.e., for all primes $p$ sufficiently large.

One of the main consequences of Billerey-Chen-Dieulefait-Freitas \cite{Chen-2022-xhyper, BCDF2} is the following theorem.

\begin{theorem}
\label{T:main11}
For $r = 5, 7, 11, 13$ and all integers $n \geq 2$, there are no non-trivial primitive solutions $(a,b,c) \in \Z^3$ to the equation
\begin{equation}\label{main-equ}
    x^{r} + y^{r} = z^n,
\end{equation} 
such that $2 \mid a + b$ or $r \mid a + b$.
\end{theorem}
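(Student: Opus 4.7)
The strategy is a multi-Frey Darmon-program attack handling the two hypotheses $2 \mid a+b$ and $r \mid a+b$ with different Frey varieties. After the standard reductions (the cases $n \in \{2,3,4\}$ being known from Darmon-Merel, classical $2$-descent, and the signature-$(r,r,2)$ results), we may assume $n=p$ is an odd prime, and work over $K = \Q(\zeta_r)^+$.

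In the subcase $2 \mid a+b$, attach Freitas' Frey elliptic curve $F = F_{a,b}$ defined over $K$ (or a suitable subfield $k \subset K$, cf.\ Section~\ref{sec:r_r_p_Frey_curves}). In the subcase $r \mid a+b$, attach Kraus' Frey hyperelliptic curve $C_r(a,b)$, whose Jacobian $J_r(a,b)$ is of $\GL_2(K)$-type. The first-case hypothesis $r \mid a+b$ is critical for the latter: it controls the valuation of the discriminant~\eqref{Kraus-discriminant} at $\Fq_r$ and keeps the Serre-level contribution there small.

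For each Frey variety, carry out the Darmon pipeline: (i) compute the conductor of $\rho_{\bullet,\Fp}$ at the bad primes $\Fq \in \{2,r\} \cup \{\Fq \mid a^r+b^r\}$ via the inertia-type method of Section~7, obtaining an explicit bound on the Serre level; (ii) establish modularity via Khare-Thorne (Theorem~\ref{thm:Khare_Thorne}), using Corollary~\ref{quadratic-twist}~(2) to transport residual modularity from the Legendre family; (iii) prove residual irreducibility for $p$ large --- Bilu-Parent / Kamienny-Mazur-Merel for $F$ (with refinements exploiting the small level structure built into Freitas' construction), and the local Raynaud / finite-flatness method combined with Theorem~\ref{T:finite} for $J_r(a,b)$; (iv) apply Fujiwara-Jarvis-Rajaei level lowering to obtain $\rhobar_{\bullet,\Fp} \simeq \rhobar_{f,\mathfrak{P}}$ for a Hilbert newform $f$ over $K$ of parallel weight $2$, trivial character, at the predicted Serre level.

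The hard part is the elimination step. One enumerates the Hilbert newforms at the Serre level and for each $f$ tries to rule out $\rhobar_{\bullet,\Fp} \simeq \rhobar_{f,\mathfrak{P}}$ via the trace-resultant criterion of Lemma~\ref{resultant-eliminate} at small auxiliary split primes $\Fq$. Two difficulties will dominate. (a) For $r = 11, 13$ the space of Hilbert newforms at the full Serre level is computationally out of reach from any single Frey variety; this is precisely where multi-Frey patching succeeds, since in each congruence class either $F$ or $C_r(a,b)$ yields a tractable level (cf.\ the $r=11$ example given earlier in the paper). (b) \emph{Obstructive} CM newforms arise from the trivial solution $(1,-1,0)$, whose associated Frey variety has CM by $\Q(\zeta_r)$ (cf.\ the Remark following Lemma~\ref{lem:Frey-model}); these survive naive trace comparison. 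I would eliminate them by an inertial-type or projective-image comparison at $\Fq_r$ (the CM form and a non-CM Frey representation are distinguishable there under the working hypotheses), or, on the elliptic-curve side, by applying the refined Mazur-type results of Darmon-Merel / Ellenberg that exploit the extra level structure of $F$. Putting the two Frey computations together covers both hypotheses of the theorem for each $r \in \{5,7,11,13\}$.
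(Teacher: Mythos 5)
Your plan captures the essential architecture of the proof and matches the paper's approach: a multi-Frey patching argument over $K=\Q(\zeta_r)^+$, using Freitas' Frey elliptic curve $F_{a,b}$ when $2 \mid a+b$ and Kraus' Frey hyperelliptic curve $C_r(a,b)$ in the complementary congruence class, with the standard pipeline (conductors via inertial types, Khare--Thorne modularity, irreducibility via Kamienny--Mazur for the elliptic side and local Raynaud arguments for the Jacobian side, Fujiwara--Jarvis--Rajaei level lowering, and resultant-based elimination). The paper itself proves this theorem by citation to Billerey--Chen--Dieulefait--Freitas and Chen, and your reconstruction of the underlying strategy is substantially correct.

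A few details are off, though. First, the obstructive trivial solutions in signature $(r,r,p)$ are $\pm(0,1,1)$ and $\pm(1,0,1)$, not $(1,-1,0)$: for Kraus' curve the discriminant $(-1)^{(r-1)/2}2^{2(r-1)}r^r(a^r+b^r)^{r-1}$ vanishes at $(a,b)=(1,-1)$, so $C_r(1,-1)$ is singular and contributes no Jacobian; the CM obstruction comes from $J_r(0,1)$, which is the Jacobian of $y^2 = x g(x^2-2)+2$, with CM by $\Q(\zeta_r)$. (You have imported the $(p,p,r)$ picture, where the relevant trivial solution is $(1,-1,0)$.) Second, and related: the hypothesis ``$2\mid a+b$ or $r\mid a+b$'' is precisely engineered so that the Serre level of $\rhobar_{J_r(a,b),\Fp}$ differs from the level of the CM newform attached to $J_r(0,1)$ (note $a+b=1$ for the obstructive solutions), so the CM form simply does not occur among the candidates; one does not need an inertial-type or projective-image comparison at $\Fq_r$ to dispose of it. Your proposal of such a comparison is plausible in spirit but is not how the theorem's hypotheses are used. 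Third, your statement that for $r=11,13$ ``the space of Hilbert newforms at the full Serre level is computationally out of reach from any single Frey variety'' slightly overstates: the paper notes $r=5,13$ were handled with multi-Frey arguments using only Frey \emph{elliptic} curves (and $r=7$ can be as well, with Kraus' curve merely speeding things up); only $r=11$ genuinely requires the higher-dimensional $J_r(a,b)$. Finally, the role of $r \mid a+b$ for the Kraus side is better described at the level of conductor exponents at $\Fq_r$ (Steinberg versus principal series / supercuspidal) than as ``controlling the valuation of the discriminant,'' since in fact $v_{\Fq_r}(\Delta)$ is \emph{larger} in the first case; the operative constraint for the paper's $r=11$ patching is $2\nmid a+b$, which keeps the level at $\Fq_2$ in the range where Hilbert newforms can be computed.

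These corrections are at the level of details; the blueprint you lay out is sound and is the one followed in the cited works.
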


Theorem~\ref{T:main11} can be interpreted as solving \eqref{main-equ} in the first case with respect to the primes $2$ and $r$. The $2$- and $r$-adic conditions are imposed because of the obstructive solutions $\pm (0, 1, 1)$ and $ \pm (1, 0, 1)$.

We note the cases $r = 5, 13$ were proven in \cite{BCDF2} using Frey elliptic curves, and the case $r = 7$ can be achieved using Frey elliptic curves, though the use of the multi-Frey method with Kraus' Frey hyperelliptic curve makes the computation faster. The case $r = 11$ genuinely uses Frey abelian varieties of higher dimension, but we note the use of Freitas' Frey elliptic curves is still essential to bootstrap the multi-Frey method and ensure a prime of potential good reduction for the Frey abelian variety.

The following result is proven by Freitas-Najman in \cite{Freitas-Najman} using only Frey elliptic curves and a clever application of the symplectic method. It applies when $d \not= 1$ is odd and divisible only by primes $q \not\equiv 1 \pmod r$.
\begin{theorem}
For a set of prime exponents $p$ of positive density, the equation
\begin{equation} 
  x^r + y^r = d z^p
\end{equation}
has no non-trivial primitive solutions $(a,b,c) \in \Z^3$ such that $2 \mid a+b$ or $r \mid a + b$.
\end{theorem}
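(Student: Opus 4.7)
The plan is to attach one of Freitas's Frey elliptic curves $E = E_{(a,b)}$ defined over $K = \Q(\zeta_r)^+$ (or a suitable subfield) from Section~\ref{sec:r_r_p_Frey_curves} to a putative non-trivial primitive solution $(a,b,c)$ with $2 \mid a+b$ or $r \mid a+b$, and then combine the modular method with the symplectic method of Halberstadt-Kraus-Freitas. The hypotheses on $d$ --- odd and divisible only by primes $q \not\equiv 1 \pmod r$ --- play two roles. First, the oddness keeps the prime $2$ isolated from $d$, preserving the good conductor analysis at $2$ needed to handle the case $2 \mid a+b$. Second, primes $q \mid d$ do not split completely in $\Q(\zeta_r)/\Q$, which controls how the factors of $d$ distribute across the $K$-rational factorization $x^r+y^r = \prod_j (x+\zeta_r^j y)$ and, crucially, ensures that the number of Galois-conjugate newforms at the Serre level that must be considered stays uniformly bounded in terms of $r$ and $d$ alone.

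I would carry out the first four steps of the modular method in the standard way: modularity of $E/K$ via Freitas-Le Hung-Siksek, irreducibility of $\bar{\rho}_{E,p}$ via the Kamienny-Mazur-Merel machinery discussed in Section~\ref{Mazur-Kamienny} (sharpened using the level structure coming from Freitas's construction, and available here only because we are using elliptic curves, not higher-dimensional abelian varieties), and level lowering via Fujiwara-Jarvis-Rajaei to conclude that $\bar{\rho}_{E,p} \simeq \bar{\rho}_{f,\mathfrak{P}}$ for some Hilbert newform $f$ over $K$ of parallel weight $2$, trivial character, and Serre level $\n$ depending on $d$ and the fixed congruence class of $(a,b,c)$ modulo $2$ and $r$ --- but not on the size of the solution or on $p$.

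The density statement should emerge from the elimination step as follows. Standard Frobenius-trace comparison at auxiliary primes (as in Lemma~\ref{resultant-eliminate}) will eliminate most newforms $f$ at level $\n$. For any form $f$ that survives, I would invoke the symplectic criterion: at any prime $\Fq \nmid p$ where $E$ and the abelian variety $A_f$ both have multiplicative reduction, the mod $p$ isomorphism is symplectic or anti-symplectic according to the class of $v_\Fq(\Delta_E) \cdot v_\Fq(\Delta_{A_f})$ in $\F_p^\times / (\F_p^\times)^2$. Producing two primes $\Fq_1, \Fq_2$ of common multiplicative reduction at which these Legendre-symbol conditions are incompatible for a given $f$ yields a contradiction whenever $p$ lies in one of two residue classes modulo a fixed integer $N_f$ depending only on $f$. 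By Chebotarev (or quadratic reciprocity applied to $N_f$), the intersection over all surviving $f$ of the "bad" residue classes for $p$ still has positive density, proving the theorem.

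The main obstacle I expect is arranging, for every surviving newform $f$ at the Serre level, a pair $(\Fq_1,\Fq_2)$ of auxiliary primes at which the symplectic conditions are genuinely incompatible. Since the newforms $f$ may not be accessible by explicit computation (the level $\n$ can be large depending on $d$), this has to be done via the abstract shape of $\n$: one must exploit Steinberg primes forced into the level by $d$ and by the congruence hypothesis on $(a,b)$, and show that at such primes the ratio $v_\Fq(\Delta_E)/v_\Fq(\Delta_{A_f})$ has a known parity-type behaviour. The hypothesis that no prime dividing $d$ is $\equiv 1 \pmod r$ is used here to keep Galois orbits of newforms under control, so that the finitely many resulting square-class obstructions can be intersected into a single congruence condition on $p$ of positive density. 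A secondary difficulty is ensuring that the Kamienny-Mazur-Merel bounds used for irreducibility are uniform enough that the resulting density of "good" primes $p$ is not eaten up by the finite set of small exceptions.
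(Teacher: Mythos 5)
The paper does not give a proof of this theorem: it is cited as a result of Freitas--Najman with the one-sentence description that it is proven ``using only Frey elliptic curves and a clever application of the symplectic method.'' Your sketch is broadly consistent with that description, using Freitas's Frey elliptic curves and the symplectic criterion in the elimination step.

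There is, however, an internal tension in your elimination step. You propose to first ``eliminate most newforms $f$ at level $\n$'' by Frobenius-trace comparison (Lemma~\ref{resultant-eliminate}), and only then apply the symplectic criterion to ``any form $f$ that survives'' --- but you also concede that the newforms ``may not be accessible by explicit computation'' when the level is large. Trace comparison requires computing the newforms, so these two parts of your plan are in conflict. The whole point of the symplectic method in this setting is that it can be applied \emph{without} any newform computation: at a prime $\mathfrak{q}$ occurring to the first power in the Serre level, any candidate $f$ is forced to be Steinberg, so the relevant quadratic-residue data at $\mathfrak{q}$ is controlled purely by the shape of the Serre level, not by the (unknown) form $f$. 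You should discard the trace-comparison first stage and treat the abstract symplectic criterion as the \emph{entire} elimination step; that is the ``clever application.''

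More seriously, your sketch never uses the hypothesis $d \ne 1$. The paper flags this as essential immediately after the statement: ``The trivial solutions $\pm (1,0,1), \pm (0,1,1)$ pose an essential obstruction to the method used for the above theorem which is why there is a condition on $d$.'' When $d = 1$ these trivial solutions yield elliptic curves whose mod~$p$ representation agrees with that of the Frey curve for \emph{every} $p$; no symplectic argument (nor any local comparison whatsoever) can discard them, so the intersection of ``bad'' congruence classes you want to take is empty. A complete argument must verify, and actually exploit, that for $d > 1$ these trivial solutions no longer sit at the Serre level and so do not contribute an un-eliminable form. As written, your sketch would seem to apply equally to $d = 1$, which is a sign that something is missing.
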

The trivial solutions $\pm (1,0,1), \pm (0,1,1)$ pose an essential obstruction to the method used for the above theorem which is why there is a condition on $d$. This result illustrates again the theme that each Frey abelian variety may have its advantages in certain congruence classes, modifications of the equation, or additional conditions on the exponents $p$.

When trying to solve~\eqref{E:rrp} for $d = 1$, the trivial primitive solutions with $ab=0$ pose an obstruction (for any Frey variety). For $r=3$, Chen-Siksek and Freitas solved \eqref{E:rrp} for positive density exponents $p$ \cite{ChenSiksek09, Freitas16}. This obstruction for the Frey variety $J_r(a,b)$ is distilled into the following statement for $r = 5, 7$ (the idea is similar to the proof of \cite[Theorem 14.6]{Chen-2022-xhyper}; for $r = 5$ we use \cite[Theorem 4]{BCDF2}).

\begin{theorem}\label{eliminate-to-CM}
Let $r \in \left\{ 5, 7 \right\}, d = 1$ and $p \notin \left\{ 2, 3, r \right\}$ be a prime. Then for $p$ sufficiently large, any non-trivial primitive solution $(a,b,c)$ to the equation~\eqref{E:rrp} gives rise to a residual Frey representation $\rhobar_{J_r(a,b),\Fp}$ such that $\rhobar_{J_r(a,b),\Fp}$ is a quadratic twist of $\rhobar_{J_r(0,1),\Fp}$ for any choice of prime $\Fp$ of $K = \Q(\zeta_r)^+$ above $p$.
\end{theorem}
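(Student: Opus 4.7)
The plan is to run the full modular method of Section 2 applied to the Frey hyperelliptic curve $C_r(a,b)$ of signature $(r,r,p)$, and to argue that after Step 5 (elimination) the only Hilbert newform which survives corresponds, up to a quadratic twist, to the CM Jacobian $J_r(0,1)$. First, I would invoke modularity of $J_r(a,b)/K$ (in the spirit of Theorem~\ref{thm:modularity_Jm}) together with irreducibility of $\rhobar_{J_r(a,b),\Fp}$ over $K(\zeta_p)$ for $p \gg 0$, obtained by the local method of Section 7 applied at a prime of potential good reduction (which is available because $p \neq 2, 3, r$ and Freitas' Frey elliptic curves can be used to bootstrap the argument, as in the multi-Frey patching for $r=11$ recalled in Theorem~\ref{T:main11}). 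Combined with Theorem~\ref{T:finite} and the level-lowering result of Fujiwara--Jarvis--Rajaei, this gives an isomorphism
\begin{equation*}
  \rhobar_{J_r(a,b),\Fp} \simeq \rhobar_{f,\mathfrak{P}}
\end{equation*}
for some Hilbert newform $f$ over $K = \Q(\zeta_r)^+$ of parallel weight $2$, trivial character, and Serre level $\n$ supported only on primes dividing $2r$.

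Next, I would list the finitely many possible Serre levels $\n$. Since $\Delta(C_r(a,b)) = \pm\, 2^{2(r-1)} r^r (a^r + b^r)^{r-1} = \pm\, 2^{2(r-1)} r^r z^{p(r-1)}$, only the primes above $2$ and $r$ contribute, and their contribution can be computed by the inertial-type method of Section 6 (cf.\ Proposition~\ref{prop:condexp_padic_weil_deligne}) depending on the residue class of $(a,b)$ modulo $2$ and $r$. For each such level, the corresponding space of Hilbert newforms over $K$ is finite-dimensional and can be enumerated explicitly for $r = 5$ and $r = 7$. For each newform $f$ at one of those levels which is \emph{not} (a quadratic twist of) the form $f_0$ attached to $J_r(0,1)$, I would perform the elimination step using a combination of (a) the trace/resultant comparison at small unramified primes $\Fq$ (as in Lemma~\ref{resultant-eliminate} and the example that follows Theorem~\ref{T:main11}), (b) inertial type comparison at $\Fq_2$ and $\Fq_r$, and (c) the refined Breuil--Diamond level lowering to control the field of coefficients $K_f$, showing in particular that it must contain $K = \Q(\zeta_r)^+$. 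For $r = 5$ this elimination is essentially carried out in \cite[Theorem~4]{BCDF2}; for $r = 7$ the same strategy applies, though with substantially heavier Hilbert modular form computations over the cubic field $\Q(\zeta_7)^+$.

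Finally, the form $f_0$ attached to $J_r(0,1)$ is a CM form (with CM by $\Q(\zeta_r)$) and it does lie at one of the predicted levels, so it cannot be eliminated by purely local means. The only remaining possibility is therefore $\rhobar_{J_r(a,b),\Fp} \simeq \rhobar_{f_0,\mathfrak{P}} \otimes \chi$ for some character $\chi$ of $G_K$. Taking determinants on both sides, and using Theorem~\ref{det-cyclotomic} to see that both representations have cyclotomic determinant, forces $\chi^2 = 1$, exactly as in the proof of Corollary~\ref{quadratic-twist}. Since $\rhobar_{f_0,\mathfrak{P}} \simeq \rhobar_{J_r(0,1),\Fp}$, this yields the desired conclusion that $\rhobar_{J_r(a,b),\Fp}$ is a quadratic twist of $\rhobar_{J_r(0,1),\Fp}$. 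The hard part is step two: for $r = 7$, the explicit computation and elimination of all non-CM Hilbert newforms at the possible Serre levels over $\Q(\zeta_7)^+$ is computationally demanding, and ensuring that every non-CM form can be ruled out uniformly in $p$ (rather than only for a fixed $p$) requires careful use of the invariant norm condition~\eqref{invariant-norm} together with inertial comparison at $\Fq_2$, precisely where the Darmon/BCDF machinery is most delicate.
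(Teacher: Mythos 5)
Your proposal reconstructs the same pipeline the paper points to (recall the paper itself does not spell this out, instead citing Theorem~14.6 of \cite{Chen-2022-xhyper} and, for $r=5$, Theorem~4 of \cite{BCDF2}): prove modularity of $J_r(a,b)/K$, establish irreducibility of $\rhobar_{J_r(a,b),\Fp}$ for $p\gg 0$, level-lower to a Hilbert newform at a level supported on $2r$, eliminate everything except the CM form $f_0$ attached to $J_r(0,1)$ and its quadratic twists, and close with the cyclotomic-determinant observation (as in Corollary~\ref{quadratic-twist}) to see that the twist character is at most quadratic. This matches the intended argument.

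One place where your description is a bit muddled is the irreducibility step. You invoke ``the local method of Section~7 applied at a prime of potential good reduction'' and then parenthetically say this is available because Freitas' Frey elliptic curves bootstrap the argument. These are really two distinct mechanisms: the local method of Section~7.1 requires $J_r(a,b)$ itself to have a controlled small prime of potential good reduction, which for $d=1$ and an arbitrary non-trivial primitive solution is not supplied by the hypotheses ($\Delta(C_r(a,b))$ is supported on $2r$ and the primes dividing $c$, none of which is a fixed small prime of potential good reduction in general). What the paper actually uses, as it says explicitly after Corollary~10.16, is that irreducibility is \emph{propagated} from Freitas' Frey elliptic curves: one proves irreducibility for the elliptic-curve side (where Kamienny--Mazur-type results, Section~7.2, apply without needing a known small prime of potential good reduction) and transfers the conclusion to $J_r(a,b)$ via the multi-Frey linkage. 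Your phrase ``bootstrap the argument'' hints at this, but attributing it to the local method of Section~7.1 is not quite the right mechanism. Also worth noting: the parallel statement in signature $(p,p,r)$, Theorem~\ref{thm:eliminate-to-CM}, has an explicit ``reducible or'' alternative precisely because irreducibility cannot always be established there, whereas the present theorem has no such escape clause---so the irreducibility input is genuinely load-bearing here, and its provenance from Freitas' curves is the key point, not a side remark. Finally, your appeal to the determinant to force $\chi^2=1$ is correct but slightly roundabout: since level lowering lands you on a Hilbert newform of parallel weight $2$ with trivial character, any surviving form equal to $f_0\otimes\chi$ must already have $\chi^2=1$ for the nebentypus to stay trivial; the determinant computation is just another way of seeing the same constraint.
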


The above theorem addresses the difficulty of eliminating all Hilbert newforms without complex multiplication, thereby allowing Darmon's conjectural ideas to kick in.

\begin{corollary}
Assuming the Cartan case of Conjecture 4.1 in \cite{DarmonDuke}, then there are no non-trivial primitive solutions to
\begin{equation*}
    x^r + y^r = z^p,
\end{equation*}
for $r = 5, 7$ and when $p$ is sufficiently large.
\end{corollary}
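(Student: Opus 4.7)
The plan is to combine Theorem~\ref{eliminate-to-CM} with the Cartan case of Conjecture~\ref{big-image} applied to the Frey abelian variety $J_r(a,b)$, reducing the problem to a clash between a forced ``CM-shaped'' image (coming from the obstructive solution $(0,1,1)$) and the conjectural large image (coming from the fact that generic solutions yield non-CM Jacobians).

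First, let $(a,b,c) \in \Z^3$ be a non-trivial primitive solution to $x^r+y^r=z^p$ and let $J_r(a,b)/K$ be the associated Frey abelian variety from Section~\ref{sec:r_r_p_Frey_curves}, with $K = \Q(\zeta_r)^+$. By Theorem~\ref{eliminate-to-CM}, for $p$ sufficiently large the residual representation $\rhobar_{J_r(a,b),\Fp}$ is isomorphic to a quadratic twist $\rhobar_{J_r(0,1),\Fp}\otimes \chi$, for any prime $\Fp \mid p$ of $K$. Since quadratic twisting does not alter the projectivization, $\mathbb{P}(\rhobar_{J_r(a,b),\Fp}) = \mathbb{P}(\rhobar_{J_r(0,1),\Fp})$.

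Next, $J_r(0,1)$ has complex multiplication by $\Q(\zeta_r)$ (as noted in the remark following Lemma~\ref{lem:Frey-model} and in \cite{DarmonDuke}). Consequently, for $p$ unramified in $\Q(\zeta_r)$ and $\Fp \nmid p$ of good reduction, the image of $\rhobar_{J_r(0,1),\Fp}$ lies in the normalizer of a Cartan subgroup of $\GL_2(\F_\Fp)$ (split or non-split depending on whether $\Fp$ splits in $\Q(\zeta_r)/K$), and hence the projective image lies in the normalizer of a (projective) Cartan subgroup of $\PGL_2(\F_\Fp)$. By the previous paragraph, the same holds for $\rhobar_{J_r(a,b),\Fp}$. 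In particular, the image of $\rhobar_{J_r(a,b),\Fp}(G_K)$ cannot contain $\SL_2(\F_\Fp)$ for any $p > 3$.

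To derive a contradiction, I would now verify that the hypotheses of Conjecture~\ref{big-image} hold for $A = J_r(a,b)$ when $(a,b)$ corresponds to a non-trivial primitive solution, namely that $\End_K(J_r(a,b)) \otimes \Q = \End_{\overline{K}}(J_r(a,b)) \otimes \Q = K$. This is the expected generic behaviour of the Frey family: the CM locus inside the $(a,b)$-parameter space consists of the degenerate points (up to scaling $(1,-1)$, $(1,0)$, $(0,1)$), which correspond precisely to the obstructive trivial solutions; any non-trivial primitive $(a,b,c)$ lies off this locus. Granting this, the Cartan case of Conjecture~\ref{big-image} forces $\SL_2(\F_\Fp) \subseteq \rhobar_{J_r(a,b),\Fp}(G_K)$ for all $p$ beyond an effective constant $C_K$, contradicting the normalizer-of-Cartan containment established above. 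Hence no such $(a,b,c)$ exists for $p$ sufficiently large.

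The main obstacle in turning this sketch into a rigorous proof is the non-CM verification in the previous paragraph: one must rule out that $J_r(a,b)$ acquires unexpected geometric endomorphisms for sporadic non-trivial primitive $(a,b)$, so that the hypothesis of Conjecture~\ref{big-image} is genuinely satisfied. This requires a separate analysis of the endomorphism algebra of the generic fibre of the Frey family (analogous to \cite[Proposition~3.7]{DarmonDuke} in the $(p,p,r)$ case) and a finiteness statement for the specializations that could accidentally acquire extra endomorphisms. Once that step is in hand, the rest of the argument is formal: Theorem~\ref{eliminate-to-CM} plus the Cartan case of Conjecture~\ref{big-image} close the proof.
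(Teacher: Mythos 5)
Your approach matches the paper's: the paper justifies this corollary as ``an immediate consequence of Theorem~\ref{eliminate-to-CM} and the analogous result to [Proposition 3.7, DarmonDuke] for the signature $(r,r,p)$,'' which is exactly the reduction you carry out — use Theorem~\ref{eliminate-to-CM} to force $\rhobar_{J_r(a,b),\Fp}$ to be a quadratic twist of $\rhobar_{J_r(0,1),\Fp}$, use the CM of $J_r(0,1)$ to place the projective image in the normalizer of a Cartan subgroup, and then invoke the Cartan case of Conjecture~\ref{big-image} to get a contradiction for $p$ large. The paper also adds the remark that the Borel case of the conjecture is not needed because irreducibility was already propagated from Freitas' Frey elliptic curves in the course of proving Theorem~\ref{eliminate-to-CM}; your sketch implicitly sidesteps this but it is worth stating, since without it the image could a priori land in a Borel rather than a Cartan normalizer.

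On your flagged ``main obstacle'' — verifying $\End_{\overline K}(J_r(a,b))\otimes\Q = K$ so that the hypotheses of Conjecture~\ref{big-image} apply — your concern is legitimate and the paper does not spell it out, but you make it sound heavier than it needs to be. A clean way to dispose of it: CM abelian varieties over number fields have potential good reduction at every finite place, whereas for a non-trivial primitive solution with a prime $\ell\nmid 2r$ dividing $c$, the discriminant formula \eqref{Kraus-discriminant} together with Proposition~\ref{prop:double_root_criterion} and Theorem~\ref{thm:multiplicative_Jacobian} shows that $J_r(a,b)$ has multiplicative reduction at primes of $K$ above $\ell$. Hence $J_r(a,b)$ cannot have CM in that case, and the degenerate case $c=\pm 2^\alpha r^\beta$ can be handled separately (and is in fact forced to be trivial for $r\ge 5$). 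So the ``separate analysis of the endomorphism algebra'' you call for is not an independent Noether--Lefschetz-type argument over the whole parameter space; a local reduction-type observation at one auxiliary prime already suffices, much in the spirit of how the irreducibility arguments elsewhere in the paper use a prime of potential good reduction.

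Finally, a small slip: you write ``$\Fp\nmid p$ of good reduction''; you mean $\Fp\mid p$ (the residual representation is mod a prime above $p$), while good reduction of $J_r(0,1)$ at $\Fp$ is a separate, easily arranged condition.
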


The above corollary is an immediate consequence of Theorem \ref{eliminate-to-CM} and the analogous result to \cite[Proposition 3.7]{DarmonDuke} for the signature $(r,r,p)$. We do not need the Borel case of the conjecture as irreducibility was ``propagated'' from Freitas' Frey elliptic curves.

For $r=3,5,7$ the Frey-Mazur conjecture over $\Q$ \cite[Conjecture 3]{Kraus-survey} implies that \eqref{E:rrp} and $d=1$ has no non-trivial primitive solutions for $p$ sufficiently large. For, $r\geq 11$ we need Frey-Mazur conjecture over $\Q(\zeta_r)^+$ and the elliptic Frey curve constructed by Freitas \cite{Freitas15}.

Finally, it is worth mentioning the method of Kraus \cite{Kraus98}. Kraus studies the case $r=3$ and $d=1$ of \eqref{E:rrp} using Frey elliptic curves and gives a practical criterion in its resolution for fix $p$. It is not difficult to generalize the method in Frey abelian varieties case for $r\geq 5$ and arbitrary $d$.

\subsection{Other directions and viewpoints}

The focus of this survey has been on generalized Fermat equations signature $(r,r,p)$ and $(p,p,r)$ with integer solutions. However, the approaches discussed also generalize to case of solutions in the ring of integers of the totally real field $K$. We mention \cite{FKS-PNAS, FreitasKrausSiksek20, FreitasSiksek15, FreitasSiksek15b, OzmanSiksek22} for an overview and \cite{Jarvis-FLT-Q2} as one of the first examples of such results.

Many of the known results for solutions over number fields are asymptotic (i.e.\ true for the prime exponent $p$ sufficiently large) \cite{KaraOzman20, Kraus19, Deconinck16, IsikKaraOzman20, Mocanu22, Mocanu22b} and rely on eliminating Hilbert newforms with coefficient field not equal to the endomorphism algebra of the Frey abelian variety. This is in turn related to the non-existence of solutions to the $S$-unit equation over $K$. When there are many solutions to the $S$-unit equation, partial results can still be obtained  with conditions on the exponent $p$ using reciprocity constraints \cite{CES} or the symplectic method \cite{freitas-siksek-1}.

Darmon's classification of Frey representations holds for prime exponents. When the exponents are not prime, often descent can be used in conjunction with the modular method to prove surprising results. We mention two results  \cite{Anni-Siksek, bennett-2p-2p-5} that would normally lie in the realm of Frey abelian varieties of higher dimension but can be studied using only Frey elliptic curves due to composite exponents.

One can consider ``general coefficients'' variants of \eqref{equ:ppr}. This has been studied in the signatures $(p,p,p)$, $(p,p,2)$, and$(p,p,3)$ where Frey elliptic curves exist \cite{Bennett-Skinner, Bennett-Vatsal-Yazdani, Kraus-Halberstadt, Kraus-Ivorra}.

If $H(x,y) \in \Z[x,y]$ is a homogeneous polynomial of degree $r$ which is $\GL_2(\Qbar)$-equivalent to the homogeneous polynomial of degrees $3,4,6,12$ whose linear factors correspond to the vertices of an equilateral triangle, tetrahedron, octahedron, and icosahedron, aka ``Klein form'', then \cite{Bennett-Dahmen, Billerey-cubic} study the equation
\begin{equation}
\label{homogeneous-equ}
  H(x,y) = C z^p,
\end{equation}
using Frey elliptic curves with constant mod $2,3,4,5$ representations, respectively.

Finally, the case $(p,q,r)$ has been studied less than the cases $(p,p,r)$ and $(r,r,p)$. Recently, Freitas-Naskr\k{e}cki-Stoll have studied the signature $(2,3,p)$ \cite{FreitasNaskreckiStoll20}.

\section{Conclusion}

The study of Diophantine equations has a long history of which the generalized Fermat equation is a prime example which is elementary to describe but hard to solve. While many results may look the same or have a similar flavour, the methods and techniques needed to solve them can vary greatly in difficulty and approaches.

Darmon's program provides a fruitful path and framework for the study of generalized Fermat equations in one varying exponent, especially in the most difficult cases, and for organizing the emerging tapestry of advances on the problem of the generalized Fermat equation. 

The construction and use of multiple Frey abelian varieties beyond those predicted by Darmon is increasingly becoming crucial for resolving new signatures, and additionally refined modularity and compatibility statements seem to be useful.

\bibliography{mybib}{}
\bibliographystyle{plain}

\end{document}